\pgfplotsset{compat=newest}
\crefname{hypothesis}{Hypothesis}{Hypotheses}
\Crefname{ALC@unique}{Line}{Lines}
\newcommand{\dual}[2]{\langle#1\hspace*{.5mm},#2\rangle}
\newcommand{\vdual}[2]{(#1,#2)}
\newcommand{\norm}[3][]{#1\|#2#1\|_{#3}}
\newcommand{\DD}{\mathcal{D}}
\newcommand\divop{\operatorname{div}}
\newcommand{\diam}{\mathrm{diam}}
\newcommand{\wilde}{\widetilde}
\newcommand\stdiv{\operatorname{div}_{t,\boldsymbol{x}}}
\newcommand\stDiv{\operatorname{\mathbf{div}}_{t,\boldsymbol{x}}}
\newcommand\divsp{\divop_{\boldsymbol{x}}}
\newcommand\Deltasp{\Delta_{\boldsymbol{x}}}
\newcommand\nablasp{\nabla_{\boldsymbol{x}}}
\newcommand\stnabla{\nabla_{t,\boldsymbol{x}}}
\newcommand\stNabla{\boldsymbol{\nabla}_{t,\boldsymbol{x}}}
\newcommand{\cA}{\ensuremath{\mathcal{A}}}
\newcommand{\cT}{\ensuremath{\mathcal{T}}}
\newcommand{\cD}{\ensuremath{\mathcal{D}}}
\newcommand{\R}{\ensuremath{\mathbb{R}}}
\newcommand{\N}{\ensuremath{\mathbb{N}}}
\newcommand{\bL}{\ensuremath{\boldsymbol{L}}}
\newcommand{\bH}{\ensuremath{\boldsymbol{H}}}
\newcommand{\g}{\ensuremath{\mathbf{g}}}
\newcommand{\f}{\ensuremath{\mathbf{f}}}
\newcommand{\nn}{\ensuremath{\boldsymbol{n}}}
\newcommand{\nnsp}{\ensuremath{\boldsymbol{n}_{\boldsymbol{x}}}}
\newcommand{\OO}{\ensuremath{\mathcal{O}}}
\newcommand{\ff}{\ensuremath{\mathbf{f}}}
\newcommand{\xx}{\ensuremath{\boldsymbol{x}}}
\newcommand{\xX}{\ensuremath{\mathcal{X}}}
\newcommand{\pphi}{{\boldsymbol\phi}}
\newcommand{\ssigma}{{\boldsymbol\sigma}}
\newcommand{\ttau}{{\boldsymbol\tau}}
\newcommand{\cchi}{{\boldsymbol\chi}}
\newcommand{\xxi}{{\boldsymbol\xi}}
\title{Well-posedness of first-order acoustic wave equations and space-time finite element
approximation\thanks{\funding{This work was supported by ANID Chile through FONDECYT projects and 1210391 (TF), and 1210579 (RG and MK)}}}
\author{Thomas F\"uhrer\thanks{Facultad de Matem\'{a}ticas, Pontificia Universidad Cat\'{o}lica de Chile, Santiago, Chile
    (\email{tofuhrer@mat.uc.cl}).}
\and Roberto Gonz\'alez\thanks{Departamento de Matem\'atica, Universidad T\'ecnica Federico Santa Mar\'ia, Valpara\'iso, Chile
(\email{roberto.gonzalezf@sansano.usm.cl})}
\and Michael Karkulik\thanks{Departamento de Matem\'atica, Universidad T\'ecnica Federico Santa Mar\'ia, Valpara\'iso, Chile
(\email{michael.karkulik@usm.cl})}}
\begin{document}
\maketitle
\begin{abstract}
  We study a first-order system formulation of the (acoustic) wave equation and prove that the operator of this system is an isomorphsim from an appropriately defined graph space to $L^2$.
  The results rely on well-posedness and stability of the weak and ultraweak formulation of the second-order wave equation.
  As an application we define and analyze a space-time least-squares finite element method for solving the wave equation.
  Some numerical examples for one- and two-dimensional spatial domains are presented.
\end{abstract}

\begin{keywords}
  acoustic wave equation, least-squares finite element method, ultraweak formulation
\end{keywords}

\begin{AMS}
  65N30, % FEM, Galerkin
  65N12 % Stability and convergence of numerical methods
\end{AMS}
%--------------------------------------------------------------------------------------------------------
\section{Introduction}
%--------------------------------------------------------------------------------------------------------
For a finite time interval $J=(0,T)$, a Lipschitz domain $\Omega\subset\R^d$, and functions
$v:J\times\Omega\rightarrow\R$ and $\ssigma:J\times\Omega\rightarrow\R^d$,
our object of study is the following initial value problem for a first-order hyperbolic partial differential equation
\begin{subequations}\label{intro:eq}
\begin{alignat}{2}
  \partial_t v-\divsp\ssigma &= f &\quad&\text{in }J\times \Omega,\\
  \partial_t\ssigma-\nablasp v &= \g &\quad&\text{in }J\times \Omega, \\
    v(0,\cdot) &= v_0 &\quad&\text{in } \Omega, \\
    \ssigma(0,\cdot) &= \ssigma_0&\quad&\text{in } \Omega,
\end{alignat}
\end{subequations}
where $f$, $\g$, $v_0$, $\ssigma_0$ are given data and homogeneous Dirichlet boundary conditions on $J\times\partial\Omega$ are imposed on $v$.
Hyperbolic partial differential equations arise in a variety of disciplines where wave or advection fenomena are to be modeled, such as acoustics,
elastodynamics, and electromagnetism. In fact, the solution of the second order acoustic wave equation
\begin{align}\label{eq:model:wave}
  \begin{split}
    \partial_{tt}u - \Deltasp u &= f \text{ in }J\times \Omega, \quad u = 0 \text{ on } J\times\partial\Omega,\\
    u(0,\cdot) &= u_0, \quad    \partial_t u(0,\cdot) = u_1 \text{ in } \Omega
  \end{split}
\end{align}
defines, formally, a solution of~\eqref{intro:eq} by setting $v = \partial_t u$, $\ssigma = \nablasp u$ and $\g=0,v_0=u_1,\ssigma_0=\nablasp u_0$.
Naturally, the numerical approximation of hyperbolic partial differential equations
is required in a wide range of applications. A variety of methods is available,
ranging from pure finite differences to time-stepping with finite elements and discontinuous Galerkin methods,
as well as finite volume methods, cf. the textbooks~\cite{CohenP_17,Cohen_02,HesthavenW_08,Leveque_02}.
Various other approaches do not exactly fit into these frameworks, and may be called \textit{space-time methods}. Those may be separated into methods using
finite elements in space and time but not simultaneously and hence requiring tensor-product meshes,
cf. eg.,~\cite{BalesLasiecka_94,BangerthGR_10,FrenchP_96,HulbertH_90,Johnson_93,MatuteDR_22,SteinbachZ_20},
and methods using finite elements in time and space simultaneously,
cf. eg.,~\cite{DoerflerFW_16,GopalakrishnanSW_17,GopalakrishnanS_19,MoiolaP_18,PerugiaSSW_20}.
The main difficulty for the development of simultaneous space-time discretizations is that
standard variational formulations of evolution equations often involve different trial and test spaces
so that pairs of discretization spaces have to be chosen cautiously in order to obtain a robust
Petrov--Galerkin method.
In order to circumvent this problem, we mimic an approach we used successfully for the heat equation, cf.~\cite{FuehrerK_21}.
Using the operators
\begin{align*}
  \cA(v,\ssigma) = 
  \begin{pmatrix}
    \cA_0(v,\ssigma)\\
    v(0,\cdot)\\
    \ssigma(0,\cdot)
  \end{pmatrix}, \qquad\text{ with }\qquad
  \cA_0(v,\ssigma) =
    \begin{pmatrix}
    \partial_t v-\divsp\ssigma\\
    \partial_t\ssigma-\nablasp v
  \end{pmatrix},
\end{align*}
we can write equation~\eqref{intro:eq} as $\cA(v,\ssigma) = \f$.
In order to obtain well-posedness of this equation
for any $\ff = (f,\g,v_0,\ssigma_0)^\top\in L^2(J\times\Omega)\times L^2(J\times\Omega)^d\times L^2(\Omega)\times L^2(\Omega)^d$,
we will identify a Hilbert space $V_0$ such that
\begin{align*}
\cA:V_0 \rightarrow
  Y := L^2(J\times\Omega)\times L^2(J\times\Omega)^d \times L^2(\Omega)\times L^2(\Omega)^d
\end{align*}
is an isomporhism. This is our first main result, Thm.~\ref{thm:main}.
An immediate consequence of this mapping property is that the
variational formulation of least-squares type
\begin{align*}
  \text{find } (v,\ssigma)\in V_0 \text{ such that }\vdual{\cA(v,\ssigma)}{\cA(w,\ttau)}_{Y} = \vdual{\ff}{\cA(w,\ttau)}_{Y} \text{ for all } (w,\ttau)\in V_0
\end{align*}
is well-posed due to the Lax--Milgram lemma. Furthermore, any finite dimensional subspace
$V_{0,h}\subset V_0$ (even on locally refined space-time meshes) allows for a quasi-optimal approximate solution via the resolution
of a symmetric positive definite linear system.
%--------------------------------------------------------------------------------------------------------
\subsection{Existing works}
%--------------------------------------------------------------------------------------------------------
The related works~\cite{ErnestiW_19_A,GopalakrishnanS_19} treat the case
of homogeneous initial and Dirichlet boundary conditions by considering an adequate domain
$V_{\cA_0}$ including homogeneous initial and boundary conditions,
and study
\begin{align*}
  \cA_0:V_{\cA_0} \rightarrow Y_{\cA_0} := L^2(J\times\Omega)\times L^2(J\times\Omega)^d.
\end{align*}
In the mentioned works it is shown that $\cA_0$ and its adjoint $\cA_0^*$ are bounded from below
for smooth functions $(v,\ssigma)$ fulfilling the homogeneous initial and boundary conditions.
Assuming the density of such smooth functions in $V_{\cA_0}$ (same for $\cA_0^*$), standard arguments from functional analysis imply that
$\cA_0:V_{\cA_0}\rightarrow Y_{\cA_0}$ is an isomorphism.
However, the required density results are non-trivial
and are shown in~\cite{GopalakrishnanS_19} only for rectangular domains $\Omega$.
Boundedness from below for first-order reformulations of second-order partial differential equations
has been employed to analyze least-squares finite element methods for elliptic equations already
in~\cite{CaiLMM_94,CaiMM_97}.
Such an approach requires adequate stability results of the original second-order equation with right-hand side in negative norms $H^{-1}$ in order
to get rid of second order derivatives.
% We exemplify this in the case of the homogeneous Dirichlet problem for the Laplacian. As
% $-\Deltasp u = -\divsp \nablasp u = -\divsp (\nablasp u - \ssigma) - \divsp \ssigma$,
% we conclude the desired result
% \begin{align*}
%   \| \nablasp u \|_{L^2(\Omega)} \lesssim \| \divsp (\nablasp u - \ssigma) \|_{H^{-1}(\Omega)} + \| \divsp \ssigma \|_{H^{-1}(\Omega)}
%   \lesssim \| \nablasp u - \ssigma \|_{L^2(\Omega)} + \| \divsp \ssigma \|_{L^2(\Omega)}.
% \end{align*}
% In the case of the heat equation with homogeneous boundary conditions, 
% this technique has first been applied successfully in our recent work~\cite{FuehrerK_21}, and
% has been generalized to general parabolic equations and boundary conditions in~\cite{GantnerS_21}.
Second-order parabolic equations allow for right-hand sides
in $L^2(J;H^{-1})$, cf.~\cite[Thm.~23.A]{Zeidler_90}, and we used this fact in our previous
work~\cite{FuehrerK_21} in the case of the heat equation, cf. also~\cite{GantnerS_21}.
Unfortunately, second-order hyperbolic equations such as~\eqref{eq:model:wave} do not, at least not in their ubiquitous standard variational form,
cf.~\cite[Thm.~4.2.24]{Zank_19}.

We circumvent this lack of stability in $H^{-1}$ by employing an
ultraweak formulation of the second-order wave equation from~\cite[Ch.~3, Sec.~9]{LionsM_72},
which does enjoy the required stability property and allows us to conclude that the operator $\cA$ is indeed
an isomorphism as stated above. Ultraweak formulations of wave equations have been
previously used for purposes in numerical analysis, cf.~\cite{HenningPSU_22}.

%--------------------------------------------------------------------------------------------------------
\subsection{Overview}
%--------------------------------------------------------------------------------------------------------
The remainder of this work is organized as follows: In Section~\ref{sec:preliminaries} we introduce notations, definitions, state and collect auxiliary results from the literature. Section~\ref{sec:main} contains the statements of our main results followed by their proofs. Particularly, we introduce space $V_0$, Theorem~\ref{thm:main} states that operator $\cA$ is an isomorphism and in Theorem~\ref{thm:density} we show that the space of smooth functions satisfying homogeneous boundary conditions is dense in $V_0$. 
As application of our main results we cover a least-squares finite element method together with an adaptive algorithm in Section~\ref{sec:LS}. Finally, some numerical experiments are presented in Section~\ref{sec:num}.

%--------------------------------------------------------------------------------------------------------
\section{Preliminaries}\label{sec:preliminaries}
%--------------------------------------------------------------------------------------------------------

Throughout, $\Omega\subset\R^d$ denotes a spatial polygonal Lipschitz domain and $J=(0,T)$ a bounded time interval.
We will also use the space-time cylinder $Q=J\times \Omega$.
%=========================================
\subsection{Function spaces}
%=========================================
For $\omega$ a computational domain (which can be $J,\Omega,Q$ or their closed versions
$\overline J, \overline\Omega,\overline Q$), and a Banach space
$X$, we use various spaces of $X$-valued functions: continuous functions $C(\omega;X)$,
smooth functions $C^\infty(\omega;X)$, and compactly supported smooth functions
$\cD(\omega;X)$. 
The space of distributions is denoted by $\cD'(\omega;X)$.
In the case $X = \R$ we omit $X$ in the preceding (and following)
notations.
We consider the standard Lebesgue and Sobolev spaces
$L^2(\Omega)$ and $H^k(\Omega)$ for $k\geq 1$ with the standard norms.
Vector-valued versions in $d$ dimensions of these spaces will be denoted by bold-face symbols, e.g.,
$\bL^2(\Omega) := L^2(\Omega)^d$.
The spatial gradient will be denoted by $\nablasp$.
The space $H^1_0(\Omega)$ consists of all functions $u\in H^1(\Omega)$
with vanishing trace $\gamma u$ on the boundary $\partial\Omega$.
On $H^1_0(\Omega)$, we use the equivalent norm
\begin{align*}
  \| u \|_{H^1_0(\Omega)} := \| \nablasp u \|_{L^2(\Omega)}.
\end{align*}
Note that $\nablasp H^1_0(\Omega) := \left\{ \nablasp u \mid u\in H^1_0(\Omega) \right\}$
is a closed subspace of $\bL^2(\Omega)$, and we have the direct sum, i.e., Helmholtz decomposition,
$\bL^2(\Omega) = \nablasp H^1_0(\Omega) \oplus \nablasp H^1_0(\Omega)^\perp$. We will use the
$L^2$-orthogonal projections $\Pi_{\nablasp H^1_0(\Omega)}$ and $\Pi_{\nablasp H^1_0(\Omega)^\perp}$
onto these subspaces.

We define $H^{-1}(\Omega):= H^1_0(\Omega)'$ as topological dual
with respect to the extended $L^2(\Omega)$ scalar product $\vdual{\cdot}{\cdot}_\Omega$.
A standard reference on Lebesgue--Sobolev spaces is~\cite{AdamsF_03}.
We will use the Lebesgue--Bochner space $L^2(X)$ of functions
$f:J\rightarrow X$ which are strongly measurable with respect to the Lebesgue measure $ds$ on $J$ and
\begin{align*}
  \| f \|_{L^2(X)}^2 := \int_J \| f(s) \|_X^2\,ds < \infty.
\end{align*}
% In particular, a function $f\in L^2(X)$ is Bochner integrable, and
% \begin{align*}
%   \int_J f(s)\,ds \in X.
% \end{align*}
For a function $f\in L^2(X)$ we define $f'\in \DD'(J;X)$ by
\begin{align*}
  f'(\varphi) = -\int_J f(s)\cdot \varphi'(s)\,ds\in X\quad\forall\varphi \in \cD(J).
\end{align*}
We then define the Sobolev--Bochner space $H^k(X)$ of functions in $L^2(X)$ whose weak derivatives
$f^{(\alpha)}$ of all orders $|\alpha|\leq k$ belong to $L^2(X)$, endowed with the norm
\begin{align*}
  \| f \|_{H^k(X)}^2 := \sum_{|\alpha|\leq k} \| f^{(\alpha)} \|_{L^2(X)}^2.
\end{align*}
An extensive reference on Sobolev--Bochner spaces is~\cite{HytonenNVW_16}.
%--------------------------------------------------------------------------------------------------------
We have the following well-known results, cf.~\cite[Ch.~XVIII]{DautrayL_92}.
\begin{lemma}\label{lem:ttrace}
  \begin{enumerate}
    \item[(i)] The embedding $H^1(X)\hookrightarrow C(\overline J;X)$ is continuous, and
      \begin{align*}
	f(t) - f(s) = \int_s^t f'(r)\,dr \qquad\text{ for all } f \in H^1(X) \text{ and } s,t\in\overline J.
      \end{align*}
    \item[(ii)]
      Let $X\hookrightarrow H \hookrightarrow X'$ be a Gelfand triple.
      Then, the embedding $L^2(X)\cap H^1(X') \hookrightarrow C(\overline J;H)$
      is continuous.
    \item[(iii)] For $f\in L^2(X)$ and $\varphi\in X'$ there holds $\varphi(\int_J f\,ds) = \int_J\varphi(f)\,ds$.
    \item[(iv)] For $X\hookrightarrow H$ continuous and dense, the space $C^\infty(\overline J;X)$ is dense in
      $L^2(X)\cap H^1(H)$.
    \item[(v)] Space $\DD(J)\otimes X$ is dense in $L^2(X)$.
  \end{enumerate}
\end{lemma}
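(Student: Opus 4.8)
All five assertions are classical (see \cite[Ch.~XVIII]{DautrayL_92}), so one option is simply to defer to that reference; a self-contained argument would proceed part by part as follows. The plan is to dispatch (i), (iii) and (v) by direct computation and to reserve the real work for (ii) and (iv).

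For (i), given $f\in H^1(X)$ I would set $g(t):=\int_0^t f'(r)\,dr$, which is well defined since $f'\in L^2(X)\hookrightarrow L^1(X)$ on the bounded interval $J$, and which lies in $C(\overline J;X)$ because Bochner integrals depend absolutely continuously on the upper limit. Testing against $\varphi\in\cD(J)$ and applying Fubini gives $g'=f'$ in $\cD'(J;X)$, hence $f-g$ has vanishing distributional derivative and therefore agrees a.e.\ with a constant $c\in X$; integrating $f=g+c$ over $J$ bounds $\|c\|_X$ by $\|f\|_{H^1(X)}$ up to a factor depending only on $T$, which yields both the continuous embedding and the stated fundamental-theorem-of-calculus identity. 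Assertion (iii) holds for $X$-valued simple functions by linearity and passes to the limit via $\abs{\varphi(\int_J(f-f_n))}\le\|\varphi\|_{X'}\int_J\|f-f_n\|_X$; assertion (v) follows because simple functions $\sum_i x_i\bfone_{E_i}$ are dense in $L^2(X)$ and each indicator $\bfone_{E_i}$ can be approximated in $L^2(J)$ by elements of $\cD(J)$.

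The hard part will be (ii), whose crux is the chain rule: for $u\in L^2(X)\cap H^1(X')$ one needs
\begin{align*}
  \|u(t)\|_H^2-\|u(s)\|_H^2 = 2\int_s^t \dual{u'(r)}{u(r)}\,dr,\qquad s,t\in\overline J,
\end{align*}
with the $X'$--$X$ duality pairing on the right. I would first establish this for $u\in C^\infty(\overline J;X)$ by the ordinary product rule, and then extend it by density of smooth functions. Granting the identity, picking $s$ with $\|u(s)\|_H^2\le T^{-1}\|u\|_{L^2(H)}^2$ and using the Gelfand-triple embeddings together with Cauchy--Schwarz and Young's inequality gives $\sup_{t\in\overline J}\|u(t)\|_H^2\lesssim \|u\|_{L^2(X)}^2+\|u'\|_{L^2(X')}^2$; continuity in $t$ then follows by approximating $u$ in $L^2(X)\cap H^1(X')$ and noting that the same estimate forces the approximants to be Cauchy in $C(\overline J;H)$.

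Finally, for (iv) — and for the density step just invoked — I would use reflection plus mollification in time: extend $u$ to a slightly larger interval by even reflection at the two endpoints, which preserves $L^2(X)$ regularity trivially and $H^1(H)$ (resp.\ $H^1(X')$) regularity because $u$ is continuous into $H$ (resp.\ $X'$) by (i), and then convolve with a standard mollifier $\rho_\varepsilon$. The regularizations $u_\varepsilon=\tilde u*\rho_\varepsilon$ belong to $C^\infty(\overline J;X)$, and $u_\varepsilon\to u$ in $L^2(X)$ while $u_\varepsilon'=\tilde u'*\rho_\varepsilon\to u'$ in $L^2(H)$ (resp.\ $L^2(X')$), giving convergence in $L^2(X)\cap H^1(H)$ as required. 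Circularity between (ii) and this argument is avoided because the smooth-function version of the chain rule is proved directly and only afterwards combined with mollification.
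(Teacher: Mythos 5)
The paper does not actually prove this lemma: it is presented as a list of well-known facts with the citation \cite[Ch.~XVIII]{DautrayL_92}, so there is no in-paper argument to compare against. Your sketch is, in substance, the classical proof underlying that reference, and it is correct. Parts (i), (iii), (v) are routine as you present them (for (i) the only ingredient beyond the fundamental theorem of calculus for Bochner integrals is the vector-valued du Bois-Reymond lemma, that a distribution with vanishing derivative is a.e.\ constant). For (ii) you correctly isolate the crux, the identity $\|u(t)\|_H^2-\|u(s)\|_H^2=2\int_s^t\dual{u'(r)}{u(r)}\,dr$, prove it for smooth functions, and transfer it by density, with the key point handled properly: applying the resulting sup-estimate to differences of smooth approximants makes them Cauchy in $C(\overline J;H)$, which both identifies the continuous representative and lets the identity pass to the limit. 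You also correctly defuse the apparent circularity by proving the density statement (iv) independently, via even reflection (legitimate because $u$ is continuous into $H$, resp.\ $X'$, by (i) applied with $X$ replaced by $H$, resp.\ $X'$) followed by mollification in time, exactly as in the standard literature. The trade-off relative to the paper is only one of economy: the paper outsources all of this to Dautray--Lions, while your version is self-contained and makes explicit the logical dependence (density as in (iv) feeding the chain-rule argument for (ii)) that the bare citation conceals.
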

For a space of real-valued functions $U$ and a Banach space $X$, we will denote 
finite tensor products as
\begin{align*}
  U\otimes X := \Bigl\{ \sum_{j=1}^n \varphi_j v_j \mid n\in\N, \varphi_j\in U,v_j\in X \Bigr\}.
\end{align*}%
\begin{lemma}\label{lem:density}
      For $X\hookrightarrow H$ continuous and dense and $\wilde X$ dense in $X$, the space
      $C^\infty(\overline J)\otimes \wilde X$ is dense in $L^2(X)\cap H^1(H)$.
\end{lemma}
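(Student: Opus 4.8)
The plan is to deduce Lemma~\ref{lem:density} from Lemma~\ref{lem:ttrace}(iv) by a two-step approximation: first approximate an element of $L^2(X)\cap H^1(H)$ by a function in $C^\infty(\overline J;X)$, then approximate that smooth $X$-valued function by an element of $C^\infty(\overline J)\otimes\wilde X$, controlling the error in the graph norm $\|\cdot\|_{L^2(X)}+\|\cdot\|_{H^1(H)}$. Let $u\in L^2(X)\cap H^1(H)$ and $\varepsilon>0$. By Lemma~\ref{lem:ttrace}(iv) there is $g\in C^\infty(\overline J;X)$ with $\|u-g\|_{L^2(X)}+\|u-g\|_{H^1(H)}<\varepsilon$, so it suffices to approximate such a $g$ by an element of $C^\infty(\overline J)\otimes\wilde X$ in the same norm.

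For the second step I would argue that $g$ and $g'$ are both continuous maps $\overline J\to X$ (here using that $g\in C^\infty(\overline J;X)$, so in particular $g'\in C^\infty(\overline J;X)\subset C(\overline J;X)$), hence their images $g(\overline J)$ and $g'(\overline J)$ are compact subsets of $X$. A standard finite-dimensional approximation argument then yields, for any $\delta>0$, a finite-dimensional subspace $X_\delta\subset X$ such that $\mathrm{dist}_X(g(t),X_\delta)<\delta$ and $\mathrm{dist}_X(g'(t),X_\delta)<\delta$ uniformly in $t\in\overline J$; concretely, one picks a partition of unity subordinate to a finite cover of $g(\overline J)\cup g'(\overline J)$ by $\delta$-balls and lets $X_\delta$ be the span of the (finitely many) centres. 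The continuous projection-type interpolation onto $X_\delta$ then produces a function $g_\delta\in C^\infty(\overline J;X_\delta)$ (smoothness is preserved because the interpolation is a fixed bounded linear map applied pointwise) with $\|g-g_\delta\|_{C(\overline J;X)}$ and $\|g'-g_\delta'\|_{C(\overline J;X)}$ both small, hence small in $L^2(X)$ and in $H^1(X)$, a fortiori in $H^1(H)$ since $X\hookrightarrow H$ continuously. Writing $g_\delta=\sum_{j=1}^m \varphi_j\, x_j$ with $\varphi_j\in C^\infty(\overline J)$ and $x_j\in X_\delta\subset X$, we now replace each $x_j$ by $\tilde x_j\in\wilde X$ with $\|x_j-\tilde x_j\|_X$ as small as we like (density of $\wilde X$ in $X$) and set $\tilde g=\sum_j \varphi_j\tilde x_j\in C^\infty(\overline J)\otimes\wilde X$; since $\tilde g-g_\delta=\sum_j \varphi_j(\tilde x_j-x_j)$ and $(\tilde g-g_\delta)'=\sum_j \varphi_j'(\tilde x_j-x_j)$, both are controlled in $C(\overline J;X)$, hence in $L^2(X)$ and $H^1(H)$. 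Chaining the three approximations and choosing the parameters small enough gives an element of $C^\infty(\overline J)\otimes\wilde X$ within $\varepsilon$ of $u$ in the graph norm.

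The main obstacle is the middle step: producing a single finite-dimensional subspace of $X$ that captures $g$ \emph{and its derivative} simultaneously and uniformly in $t$, together with a \emph{smooth} (indeed bounded-linear, applied pointwise) near-projection onto it, so that differentiation in $t$ commutes with the spatial approximation and no regularity in time is lost. Compactness of $g(\overline J)\cup g'(\overline J)$ is what makes this possible; one must be slightly careful to use the same subspace for $g$ and $g'$ (equivalently, apply the covering argument to the union of the two compact sets) so that $g_\delta'$ really equals the pointwise $X_\delta$-interpolant of $g'$. Everything else — the bound $\|\cdot\|_{H^1(H)}\le C\|\cdot\|_{H^1(X)}$ coming from $X\hookrightarrow H$, and the finite-sum manipulations — is routine.
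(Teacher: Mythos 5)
Your overall architecture (smooth-in-time approximation via Lemma~\ref{lem:ttrace}(iv), then reduction to a finite tensor product, then replacement of the finitely many spatial coefficients using density of $\wilde X$) is sound, but the middle step as you describe it would fail. The map you actually construct, $P_\delta(x)=\sum_k\theta_k(x)c_k$ with a partition of unity $\{\theta_k\}$ subordinate to a cover of $g(\overline J)\cup g'(\overline J)$ by $\delta$-balls, is \emph{not} linear, so your parenthetical justification (``the interpolation is a fixed bounded linear map applied pointwise'') contradicts the construction. Concretely, with $g_\delta(t)=P_\delta(g(t))$ one gets $g_\delta'(t)=\sum_k\langle D\theta_k(g(t)),g'(t)\rangle\,c_k$, not the $X_\delta$-interpolant of $g'(t)$: the derivatives of the cutoffs scale like $1/\delta$, so covering $g'(\overline J)$ by the same balls buys you nothing and $\|g'-g_\delta'\|_{C(\overline J;X)}$ is not controlled at all; smoothness of $t\mapsto g_\delta(t)$ is also not free, since it needs smooth cutoffs on the Banach space $X$. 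To salvage the step you must use a genuinely linear operator $P_\delta:X\to X_\delta$ with norm bounded independently of $\delta$ — e.g.\ the orthogonal projection onto the span of a finite $\delta$-net of $g(\overline J)\cup g'(\overline J)$ when $X$ is a Hilbert space (as it is in the paper's application), giving $\|P_\delta x-x\|_X\le 2\delta$ on the compact set, $(P_\delta g)'=P_\delta g'$, and $P_\delta g\in C^\infty(\overline J;X_\delta)$; in a general Banach space the existence of such uniformly bounded projections is not automatic, so the claim needs either this restriction or a different device (for instance, approximating $g$ by $X$-valued Bernstein polynomials, which already lie in $C^\infty(\overline J)\otimes X$ and converge together with their derivatives).

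For comparison, the paper avoids compactness and projections altogether: it first notes that $H^1(X)$ is dense in $L^2(X)\cap H^1(H)$, then, given $u\in H^1(X)$, approximates $u'$ in $L^2(X)$ by $\psi_\varepsilon=\sum_j\varphi_j v_j\in\DD(J)\otimes X$ (Lemma~\ref{lem:ttrace}(v)) and takes the primitive $u_\varepsilon(t)=u(0)+\sum_j\int_0^t\varphi_j\,ds\,v_j$, which lies in $C^\infty(\overline J)\otimes X$ and matches $u$ at $t=0$; a H\"older estimate in time then controls $\|u-u_\varepsilon\|_{L^2(X)}+\|u'-u_\varepsilon'\|_{L^2(X)}$, and the coefficients $v_j$ are finally moved into $\wilde X$ exactly as in your last step. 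That route needs no Hilbert structure and no uniform projection bound, which is why your proposal, as written, has a gap precisely where the paper's argument is most economical.
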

\begin{proof}
  By Lemma~\ref{lem:ttrace}~(iv), the space $C^\infty(\overline J;X)$ is dense in 
    $L^2(X)\cap H^1(H)$, thus, $H^1(X)$ is dense in $L^2(X)\cap H^1(H)$.
    It is sufficient to prove that $C^\infty(\overline J)\otimes \wilde X$ is dense in $H^1(X)$.
    Let $u\in H^1(X)$ be given and let $\varepsilon>0$.
    Since $\DD(J)\otimes X$ is dense in $L^2(X)$ (Lemma~\ref{lem:ttrace} (v)) there exists $n\in \N$, $\varphi_j\in \DD(J)$, $v_j\in X$, $j=1,\dots,n$ such that 
    \begin{align*}
      \|u'-\psi_\varepsilon\|_{L^2(X)} < \varepsilon, \quad \psi_\varepsilon = \sum_{j=1}^n \varphi_jv_j.
    \end{align*}
    Note that $u(t) = u(0)+\int_0^t u'(s)\,ds$ and define 
    \begin{align*}
      u_\varepsilon(t) = u(0) + \sum_{j=1}^n \int_0^t \varphi_j(s)\,ds \, v_j.
    \end{align*}
    Set $\phi_j(t) = \int_0^t\varphi_j(s)\,ds$, $\phi_{n+1}(t) = 1$, $v_{n+1} = u(0)$. Then, $u_\varepsilon = \sum_{j=1}^{n+1} \phi_jv_j \in C^\infty(\overline J)\otimes X$. 
    Furthermore, $u_\varepsilon' = \psi_\varepsilon$ and $u_\varepsilon(0) = u(0)$.
    Together with an application of a H\"older estimate in the time variable we obtain
    \begin{align*}
      \|u-u_\varepsilon\|_{L^2(X)} + \|u'-u_\varepsilon'\|_{L^2(X)} \leq C \varepsilon, 
    \end{align*}
    where $C>0$ is independent of $\varepsilon$. We conclude that $C^\infty(\overline J)\otimes X$ is dense in $H^1(X)$. Since $\wilde X$ is dense in $X$ we can replace $v_j\in X$ in the above proof by sufficiently close $v_j\in \wilde X$.
\end{proof}
Note that $L^2(L^2(\Omega))$ is isomorphic to $L^2(Q)$. From this point on we identify both spaces. 
Let $\partial_t f\in D'(Q)$ denote the distributional derivative.
For our studies we need to work with time derivatives and space derivatives simultaneously. The latter are defined as distributions on $Q$. The next result shows that $f'$ can be identified with the distributional derivative $\partial_t f\in \DD'(Q)$. Its proof is straightforward and omitted. 
\begin{lemma}
  Mapping $\{f'\mid f\in L^2(Q)\} \to \DD'(Q)$, $
  f' \mapsto \partial_t f$ is inyective. 
\end{lemma}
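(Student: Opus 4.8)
The plan is to reduce the statement to the single pairing identity
\begin{equation}\tag{$\star$}
  \vdual{f'(\varphi)}{\phi}_\Omega = (\partial_t f)(\varphi\otimes\phi)
  \qquad\text{for all } f\in L^2(Q),\ \varphi\in\DD(J),\ \phi\in\DD(\Omega),
\end{equation}
where $\varphi\otimes\phi\in\DD(J)\otimes\DD(\Omega)\subset\DD(Q)$ denotes the function $(s,x)\mapsto\varphi(s)\phi(x)$. Both well-definedness of the map and its injectivity will then be read off from~$(\star)$ by elementary density arguments.

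To prove~$(\star)$, I would start from the definition $f'(\varphi)=-\int_J f(s)\varphi'(s)\,ds\in L^2(\Omega)$, pair it with $\phi\in\DD(\Omega)\subset L^2(\Omega)$, and pull the $L^2(\Omega)$-pairing inside the Bochner integral by Lemma~\ref{lem:ttrace}(iii) applied to $s\mapsto f(s)\varphi'(s)\in L^2(L^2(\Omega))$, which gives $\vdual{f'(\varphi)}{\phi}_\Omega=-\int_J\vdual{f(s)}{\phi}_\Omega\,\varphi'(s)\,ds$. Using the identification $L^2(L^2(\Omega))\cong L^2(Q)$ together with Fubini's theorem, the right-hand side equals $-\int_Q f(s,x)\,\varphi'(s)\phi(x)\,d(s,x)=-\int_Q f\,\partial_t(\varphi\otimes\phi)$, which by definition of the distributional time derivative is exactly $(\partial_t f)(\varphi\otimes\phi)$.

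With~$(\star)$ available, linearity of $f\mapsto f'$ and of $f\mapsto\partial_t f$ reduces injectivity to showing that $\partial_t f=0$ in $\DD'(Q)$ implies $f'=0$ in $\DD'(J;L^2(\Omega))$: fixing $\varphi\in\DD(J)$, identity~$(\star)$ yields $\vdual{f'(\varphi)}{\phi}_\Omega=(\partial_t f)(\varphi\otimes\phi)=0$ for every $\phi\in\DD(\Omega)$, and since $\DD(\Omega)$ is dense in $L^2(\Omega)$ this forces $f'(\varphi)=0$; as $\varphi$ was arbitrary, $f'=0$. For the map to be well defined in the first place one runs~$(\star)$ the other way: if $f'=g'$ then $(\partial_t(f-g))(\varphi\otimes\phi)=0$ for all $\varphi,\phi$, and the standard density of $\DD(J)\otimes\DD(\Omega)$ in $\DD(Q)$ gives $\partial_t f=\partial_t g$.

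I do not anticipate a genuine obstacle; the content is essentially bookkeeping about which duality one differentiates in. The only steps requiring a line of justification are the two interchanges hidden in the proof of~$(\star)$ — moving the test function $\phi$ through the time integral via Lemma~\ref{lem:ttrace}(iii), and the Fubini step built into $L^2(L^2(\Omega))\cong L^2(Q)$ — and, for the well-definedness remark only, the classical fact that a distribution on $Q$ annihilating all separated test functions $\varphi\otimes\phi$ vanishes identically.
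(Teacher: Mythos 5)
Your proof is correct: the paper itself omits the argument as ``straightforward,'' and your identity $\vdual{f'(\varphi)}{\phi}_\Omega = (\partial_t f)(\varphi\otimes\phi)$, combined with density of $\DD(\Omega)$ in $L^2(\Omega)$ for injectivity and density of tensor-product test functions in $\DD(Q)$ for well-definedness, is exactly the standard bookkeeping intended. Nothing further is needed.
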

% \begin{proof}
%   Let $f,g \in L^2(Q)$ be given with $T(f')=T(g')$. Therefore $\vdual{f-g}{\phi'}_Q=0$ for all $\phi\in \DD(Q)$. Take $\phi = \varphi\psi$ with $\varphi\in \DD(J)$, $\psi\in\DD(\Omega)$.
%   Using Fubini's theorem and the definition of $f'-g'$, we see that
%   \begin{align*}
%     0 &= \int_Q \big( (f-g)(t)\varphi'(t)\big)(\xx)\psi(\xx)\,d(t,\xx) = \int_\Omega \left( \int_J (f-g)(t)\varphi'(t)\,dt \right)(\xx)\psi(\xx)\,d\xx 
%     \\
%     &= \vdual{-(f'-g')(\varphi)}{\psi}_\Omega.
%   \end{align*}
%   Since $\psi\in\DD(\Omega)$ is arbitrary and $\DD(\Omega)$ is dense in $L^2(\Omega)$,  we conclude that $(f'-g')(\varphi) = 0$ for all $\varphi \in D(J)$ and, consequently, $f'=g'$.
% \end{proof}
%It is known, cf.~\cite[Prop.~II.5.3]{BoyerF_13}, 
%\tofu{It follows from Lemma~\ref{lem:ttrace} (v)} that $C(\overline J;X)$ is dense in $L^2(X)$.
For a linear and bounded operator $L:X\rightarrow Y$ and $\varphi\in \cD(J)\otimes X$ we can define $L\varphi \in C(\overline J;Y)$ pointwise by $(L\varphi)(t) := L(\varphi(t))$. Then, 
\begin{align*}
  \| L\varphi \|_{L^2(Y)} \leq \|L\|_{X\rightarrow Y}\| \varphi \|_{L^2(X)},
\end{align*}
and we can define $Lf\in L^2(Y)$ for $f\in L^2(X)$ by density (Lemma~\ref{lem:ttrace} (v)).
%--------------------------------------------------------------------------------------------------------
We will have to interchange spatial derivatives with temporal derivatives and evaluations.
To that end, we will employ the following lemma.
\begin{lemma}\label{lem:schwarz}
  Let $L:X\rightarrow Y$ be a linear and bounded operator between two Banach spaces $X$ and $Y$
  and $u\in H^1(X)$. Then,
  \begin{enumerate}
    \item[(i)] $(Lu)(t) = L(u(t))$ for all $t\in \overline J$,
    \item[(ii)] $Lu \in H^1(Y)$ and $(Lu)' = L(u')$.
  \end{enumerate}
\end{lemma}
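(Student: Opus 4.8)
The plan is to prove both statements by first reducing from general $f\in H^1(X)$ to the dense subspace $\cD(J)\otimes X$ provided by Lemma~\ref{lem:ttrace}~(v), where everything can be checked by hand, and then passing to the limit using continuity of $L$ and of the relevant embeddings.

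\textbf{Step 1 (the dense case).} Let $\varphi = \sum_{j=1}^n \phi_j v_j$ with $\phi_j\in\cD(J)$ and $v_j\in X$. Then $(L\varphi)(t) = \sum_j \phi_j(t) L(v_j)$ by the pointwise definition of $L\varphi$ preceding the lemma, so (i) holds for such $\varphi$ trivially. Moreover $(L\varphi)'(t) = \sum_j \phi_j'(t) L(v_j) = L\big(\sum_j \phi_j'(t) v_j\big) = L(\varphi'(t)) = (L\varphi')(t)$, using linearity of $L$ and again the pointwise definition, so $L\varphi\in\cD(J)\otimes Y\subset H^1(Y)$ with $(L\varphi)' = L(\varphi')$, giving (ii) on the dense subspace.

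\textbf{Step 2 (passing to the limit for (ii)).} Given $u\in H^1(X)$, choose $\varphi_k\in\cD(J)\otimes X$ with $\varphi_k\to u$ and $\varphi_k'\to u'$ in $L^2(X)$; this is possible because $\cD(J)\otimes X$ is dense in $L^2(X)$ and, by applying this to $u'$ and integrating as in the proof of Lemma~\ref{lem:density}, one can simultaneously approximate $u$ and $u'$, i.e. $\cD(J)\otimes X$ (after adding the constant-in-time term $u(0)$, which itself lies in the closure) is dense in $H^1(X)$. By the boundedness estimate $\|Lf\|_{L^2(Y)}\le\|L\|_{X\to Y}\|f\|_{L^2(X)}$ recalled just before the lemma, $L\varphi_k\to Lu$ in $L^2(Y)$ and $L(\varphi_k') = (L\varphi_k)'\to L(u')$ in $L^2(Y)$. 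Since the distributional derivative is continuous from $L^2(Y)$ to $\cD'(J;Y)$, we get $(Lu)' = L(u')\in L^2(Y)$, hence $Lu\in H^1(Y)$ and (ii) follows.

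\textbf{Step 3 (identity (i) for all $t$).} Both $t\mapsto (Lu)(t)$ and $t\mapsto L(u(t))$ are continuous $\overline J\to Y$: the first because $Lu\in H^1(Y)\hookrightarrow C(\overline J;Y)$ by Lemma~\ref{lem:ttrace}~(i), the second because $u\in H^1(X)\hookrightarrow C(\overline J;X)$ by the same lemma and $L$ is continuous $X\to Y$. They agree for a.e. $t$ (indeed for all $t$) on the dense subspace by Step~1, and by the continuity of $f\mapsto Lf$ on $H^1$ established in Step~2 together with the continuity of the evaluation maps, the set of $u$ for which $(Lu)(t) = L(u(t))$ for all $t\in\overline J$ is closed in $H^1(X)$ and contains the dense subspace $\cD(J)\otimes X$; hence it is all of $H^1(X)$. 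Alternatively, and more directly, use the fundamental theorem of calculus from Lemma~\ref{lem:ttrace}~(i): $(Lu)(t) = (Lu)(0) + \int_0^t (Lu)'(r)\,dr = L(u(0)) + \int_0^t L(u'(r))\,dr = L\big(u(0)+\int_0^t u'(r)\,dr\big) = L(u(t))$, where the third equality uses Lemma~\ref{lem:ttrace}~(iii) to pull the bounded operator $L$ (or rather the functionals $\varphi\circ L$, $\varphi\in Y'$) inside the Bochner integral, and the continuity at the endpoint $t=0$ fixes $(Lu)(0)=L(u(0))$.

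The only mildly delicate point is the simultaneous approximation of $u$ and $u'$ in $L^2(X)$ by elements of $\cD(J)\otimes X$, i.e. density of $\cD(J)\otimes X$ in $H^1(X)$ up to the constant mode; but this is exactly the construction already carried out in the proof of Lemma~\ref{lem:density} (approximate $u'$ in $L^2(X)$ and antidifferentiate, adding back $u(0)$), so no new work is needed. Everything else is a routine density-plus-continuity argument, which is presumably why the authors remark the proof is straightforward.
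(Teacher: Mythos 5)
Your proof is correct in substance, but it follows a genuinely different route from the paper: the paper dispatches (i) with ``by definition and Lemma~\ref{lem:ttrace}~(ii)'' and (ii) by citing \cite[Lemma~64.34]{ErnG_21PartIII}, whereas you give a self-contained tensor-product/density argument. What your approach buys is independence from the external reference and an explicit verification on simple functions; what the paper's citation buys is brevity and the full generality of the Bochner--Sobolev setting without re-proving a standard fact. Your Step~3 (either the closed-set argument or, more cleanly, the fundamental-theorem-of-calculus computation combined with Lemma~\ref{lem:ttrace}~(i),(iii) and a Hahn--Banach separation to pull $L$ through the Bochner integral) is a perfectly sound way to get (i) from (ii).

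One point needs repair. In Step~2 you ask for $\varphi_k\in\cD(J)\otimes X$ with $\varphi_k\to u$ and $\varphi_k'\to u'$ in $L^2(X)$; such a sequence does not exist in general, since convergence of both $\varphi_k$ and $\varphi_k'$ in $L^2(X)$ implies convergence in $C(\overline J;X)$ by Lemma~\ref{lem:ttrace}~(i), and every element of $\cD(J)\otimes X$ vanishes at $t=0$ and $t=T$; so your parenthetical claim that the constant function $t\mapsto u(0)$ ``lies in the closure'' is false in the $H^1(X)$ topology whenever $u(0)\neq 0$. The fix is exactly what your own construction produces: the approximants $u_k(t)=u(0)+\int_0^t\psi_k(s)\,ds$ with $\psi_k\in\cD(J)\otimes X$ belong to $C^\infty(\overline J)\otimes X$ (as in Lemma~\ref{lem:density}), not to $\cD(J)\otimes X$. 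Step~1 applies verbatim to such tensor products, provided you add the small observation that the density-extended operator $L$ on $L^2(X)$ agrees a.e.\ with the pointwise application $t\mapsto\sum_j\phi_j(t)Lv_j$ also when $\phi_j\in C^\infty(\overline J)$ rather than $\cD(J)$ (approximate each $\phi_j$ in $L^2(J)$ by compactly supported smooth functions and use the $L^2$-continuity of both sides). With that adjustment Steps~2 and~3 go through unchanged, and since $u_k(0)=u(0)$ exactly, the endpoint identity $(Lu)(0)=L(u(0))$ also falls out directly; so the gap is a fixable imprecision, not a failure of the method.
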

\begin{proof}
  The first statement follows by definition and Lemma~\ref{lem:ttrace}, (ii).
  The second statement follows from~\cite[Lemma~64.34]{ErnG_21PartIII}.
%   To prove the second statement, we note that, cf.~\cite[V.5, Cor~2]{Yosida},
%   if $f:J\rightarrow X$ is Bochner integrable, then $Lf: J\rightarrow Y$ is Bochner integrable
%   and
%   \begin{align*}
%     L \int_J f(t)\,dt = \int_J (L f)(t)\,dt.
%   \end{align*}
%   For $u\in H^1(X)$, we calculate for any $\varphi\in \cD(J)$
%   \begin{align*}
%     \int_J(Lu)(t) \varphi'(t)\,dt = L \int_J u(t) \varphi'(t)\,dt = - L \int_J u'(t)\varphi(t)\,dt = -\int_J L(u')(t)\varphi(t)\,dt.
%   \end{align*}
\end{proof}
%--------------------------------------------------------------------------------------------------------
%We denote by $\partial_{x_j}$ the partial derivative in the $j$th spatial direction. Note that
%$\partial_j:L^2(\Omega)\rightarrow H^{-1}(\Omega)$ linear and bounded, and the preceding results show
Note that $\partial_{\xx_j}:L^2(L^2(\Omega))\rightarrow L^2(H^{-1}(\Omega))$ is bounded. Likewise, the spatial gradient
$\nablasp:L^2(L^2(\Omega))\rightarrow L^2(\bH^{-1}(\Omega))$ is bounded.
By $\divsp:\bL^2(\Omega)\rightarrow H^{-1}(\Omega)$ respectively
$\divsp:L^2(\bL^2(\Omega))\rightarrow L^2(H^{-1}(\Omega))$
we denote the spatial divergence operator.
Set $\bL^2(Q) = L^2(Q;\R^d)$.

Finally, for $\ttau = (\ttau_1,\dots,\ttau_{1+d})\in L^2(Q)^{1+d}$ we define the time-space divergence operator by
$\stdiv\ttau = \partial_t\ttau_1 +  \sum_{j=1}^{d} \partial_{x_j} \ttau_{j+1}$.
%--------------------------------------------------------------------------------------------------------
We have the following observation, see, e.g.,~\cite[Theorem~4.5]{BerggrenH21}.
\begin{lemma}\label{lem:subspace}
  Let $X,Y,Z$ be Banach spaces with continuous embedding $Z\hookrightarrow Y$. Let $T:X\rightarrow Y$
  be linear and continuous. Then, $W := \left\{ x \in X \mid Tx\in Z \right\}$
  is a Banach space with respect to the (squared) norm
  \begin{align*}
    \| w \|_{W}^2 := \| w \|_X^2 + \| Tw\|_Z^2.
\end{align*}
\end{lemma}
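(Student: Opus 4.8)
The plan is to verify directly that $W$ equipped with $\|\cdot\|_W$ is complete, since the norm axioms and the fact that $\|\cdot\|_W$ is indeed a norm (not merely a seminorm) are immediate: $\|w\|_W = 0$ forces $\|w\|_X = 0$ and hence $w = 0$. So the only substantive point is completeness. First I would take a Cauchy sequence $(w_n)$ in $W$. From the definition of the norm, $\|w_n - w_m\|_X^2 + \|Tw_n - Tw_m\|_Z^2 = \|w_n - w_m\|_W^2 \to 0$, so $(w_n)$ is Cauchy in $X$ and $(Tw_n)$ is Cauchy in $Z$. Since $X$ and $Z$ are Banach spaces, there exist $w \in X$ with $w_n \to w$ in $X$ and $z \in Z$ with $Tw_n \to z$ in $Z$.

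The key step is then to identify $z$ with $Tw$, so that $w \in W$. Because $T : X \to Y$ is continuous and $w_n \to w$ in $X$, we have $Tw_n \to Tw$ in $Y$. On the other hand, $Tw_n \to z$ in $Z$, and the embedding $Z \hookrightarrow Y$ is continuous, so $Tw_n \to z$ in $Y$ as well. By uniqueness of limits in the Hausdorff space $Y$, we conclude $z = Tw$. In particular $Tw = z \in Z$, so $w \in W$.

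It remains to check convergence in the $W$-norm: $\|w_n - w\|_W^2 = \|w_n - w\|_X^2 + \|Tw_n - Tw\|_Z^2 = \|w_n - w\|_X^2 + \|Tw_n - z\|_Z^2 \to 0$ by the two convergences established above. Hence every Cauchy sequence in $W$ converges in $W$, and $W$ is a Banach space.

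I do not expect any real obstacle here: the argument is the standard "graph-space is closed" trick, and the only place one must be slightly careful is invoking the continuity of the embedding $Z \hookrightarrow Y$ to transfer the $Z$-limit into $Y$ and then using uniqueness of limits — this is exactly what rules out pathologies and is where the hypothesis $Z \hookrightarrow Y$ (rather than an arbitrary unrelated $Z$) is used. If one wanted to be pedantic, one could also remark that $\|\cdot\|_W$ satisfies the triangle inequality because it is the norm on $X \times Z$ restricted (via $w \mapsto (w, Tw)$) to the graph of $T$, but this is routine.
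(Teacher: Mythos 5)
Your proof is correct and is exactly the standard graph-space completeness argument; the paper itself does not prove this lemma but simply cites it (``see, e.g., [BerggrenH21, Theorem~4.5]''), and your argument is the expected proof behind that citation. The one genuinely essential point --- transferring the $Z$-limit of $Tw_n$ into $Y$ via the continuous embedding and invoking uniqueness of limits in $Y$ to identify it with $Tw$ --- is handled correctly, so nothing is missing.
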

%=========================================
\subsection{Variational formulations of the second order wave equation}
%=========================================
In this section we collect results on different variational formulations of the wave equation found in the literature. We also need an additional regularity result for the weak formulation of the wave equation, see Lemma~\ref{lem:regularity} below. We start with the following well-posedness and stability result of the weak formulation of the wave equation~\eqref{eq:model:wave}.
\begin{theorem}[{\cite[Ch.~3, Thm.~8.1]{LionsM_72}}]\label{thm:wave:weak}
Let $f\in L^2(L^2(\Omega))$, $g_0\in H^1_0(\Omega)$, $g_1\in L^2(\Omega)$ be given. There exists a unique
$u\in L^2(H^1_0(\Omega))\cap H^1(L^2(\Omega))\cap H^2(H^{-1}(\Omega))$ such that for all $v\in L^2(H_0^1(\Omega))$
\begin{align}\label{eq:wave}
  \begin{split}
    \int_0^T\dual{\partial_{tt} u(s) - \Deltasp u(s)}{v(s)}_{H^{-1}(\Omega)\times H^1_0(\Omega)}\,ds &= \int_0^T \vdual{f(s)}{v(s)}_\Omega\,ds\\
    u(0) &= g_0, \quad \partial_t u(0) = g_1.
  \end{split}
\end{align}
Furthermore, $u\in C(\overline J;H^1_0(\Omega))\cap C^1(\overline J;L_2(\Omega))$ and
\begin{align}\label{eq:wave:weak}
\begin{split}
  &\| u \|_{L^\infty(H^1_0(\Omega))} + \| \partial_t u \|_{L^\infty(L^2(\Omega))} + \| \partial_{tt}u \|_{L^2(H^{-1}(\Omega))}
  \\
  &\qquad \lesssim \| f \|_{L^2(Q)} + \| g_0 \|_{H^1(\Omega)} + \| g_1 \|_{L^2(\Omega)}.
\end{split}
\end{align}
\end{theorem}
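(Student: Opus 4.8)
The statement is quoted from~\cite{LionsM_72}, so the ``proof'' is a reference; nevertheless, here is how I would reprove it from scratch, by the classical Faedo--Galerkin method (cf.\ also~\cite[Ch.~XVIII]{DautrayL_92}). \textit{Step 1 (Galerkin approximation and energy estimate).} Let $\{w_k\}_{k\in\N}\subset H^1_0(\Omega)$ be the $L^2(\Omega)$-orthonormal basis of Dirichlet eigenfunctions of $-\Deltasp$, which is simultaneously orthogonal in $H^1_0(\Omega)$. For each $m\in\N$ let $u_m(t)=\sum_{k=1}^m\alpha_k(t)w_k$ solve the linear second order ODE system obtained by testing~\eqref{eq:wave} with $w_1,\dots,w_m$, with $u_m(0)$ and $\partial_t u_m(0)$ the $L^2(\Omega)$-projections of $g_0$ and $g_1$; since $t\mapsto\vdual{f(t)}{w_k}_\Omega\in L^2(J)$, Carath\'eodory's theorem yields a unique solution with $\alpha_k\in H^2(J)$. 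Multiplying the $k$-th equation by $\alpha_k'(t)$, summing, and using orthogonality gives the energy identity
\begin{align*}
  \tfrac12\tfrac{d}{dt}\Bigl(\|\partial_t u_m(t)\|_{L^2(\Omega)}^2+\|\nablasp u_m(t)\|_{L^2(\Omega)}^2\Bigr)=\vdual{f(t)}{\partial_t u_m(t)}_\Omega .
\end{align*}
Integrating in $t$, estimating the right-hand side by Cauchy--Schwarz and Young, and invoking Gronwall's inequality yields
\begin{align*}
  \|\partial_t u_m\|_{L^\infty(L^2(\Omega))}+\|\nablasp u_m\|_{L^\infty(L^2(\Omega))}\lesssim \|f\|_{L^2(Q)}+\|g_0\|_{H^1(\Omega)}+\|g_1\|_{L^2(\Omega)},
\end{align*}
with a constant depending only on $T$, uniformly in $m$ (using that the $L^2$-projections of the initial data are bounded by $g_0$, $g_1$).

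\textit{Step 2 (Passage to the limit, initial conditions, uniqueness).} By the uniform bound and Banach--Alaoglu, a subsequence satisfies $u_m\overset{*}{\rightharpoonup}u$ in $L^\infty(H^1_0(\Omega))$ and $\partial_t u_m\overset{*}{\rightharpoonup}\partial_t u$ in $L^\infty(L^2(\Omega))$. Testing the Galerkin identity against $\varphi(t)w_k$ with $\varphi\in\cD(J)$ fixed, letting $m\to\infty$, and using density of $\operatorname{span}\{\varphi_j w_{k_j}\}$ in $L^2(H^1_0(\Omega))$, one sees that $u$ satisfies~\eqref{eq:wave}; note $u\in H^1(L^2(\Omega))\hookrightarrow C(\overline J;L^2(\Omega))$ by Lemma~\ref{lem:ttrace}~(i). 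Testing~\eqref{eq:wave} against $\varphi\in C^\infty(\overline J;H^1_0(\Omega))$ with $\varphi(T)=0$, integrating by parts twice in $t$, and comparing with the limit of the Galerkin initial data identifies $u(0)=g_0$ and $\partial_t u(0)=g_1$. For uniqueness (i.e.\ $f=g_0=g_1=0$ forces $u=0$): since the limit solution is a priori too rough to be tested with $\partial_t u$, I would use the Ladyzhenskaya trick of testing the homogeneous weak equation with $v(t)=\int_t^{s}u(r)\,dr$ for fixed $s\in J$, which after integration in time gives $\|\nablasp\!\int_0^s u\|_{L^2(\Omega)}^2+\|u(s)\|_{L^2(\Omega)}^2=0$, hence $u\equiv 0$.

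\textit{Step 3 (Second derivative and continuity).} From~\eqref{eq:wave}, $\partial_{tt}u=\Deltasp u+f$ in $\cD'(J;H^{-1}(\Omega))$; since $\Deltasp:H^1_0(\Omega)\to H^{-1}(\Omega)$ is bounded and $L^2(Q)\hookrightarrow L^2(H^{-1}(\Omega))$, this gives $\partial_{tt}u\in L^2(H^{-1}(\Omega))$ with $\|\partial_{tt}u\|_{L^2(H^{-1}(\Omega))}\lesssim\|u\|_{L^2(H^1_0(\Omega))}+\|f\|_{L^2(Q)}$, so that together with Step~1 the bound~\eqref{eq:wave:weak} follows. The remaining point is the strong continuity $u\in C(\overline J;H^1_0(\Omega))\cap C^1(\overline J;L^2(\Omega))$, and this is where I expect the real work to lie. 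The data $u\in L^\infty(H^1_0(\Omega))$, $\partial_t u\in L^\infty(L^2(\Omega))$, $\partial_{tt}u\in L^2(H^{-1}(\Omega))$ by themselves give only \emph{weak} continuity into $H^1_0(\Omega)$ and $L^2(\Omega)$, because one cannot differentiate the energy $t\mapsto\|\nablasp u(t)\|_{L^2(\Omega)}^2+\|\partial_t u(t)\|_{L^2(\Omega)}^2$ by pairing $\partial_{tt}u(t)\in H^{-1}(\Omega)$ with $\partial_t u(t)\in L^2(\Omega)$. The classical remedy is a density/mollification argument: first prove~\eqref{eq:wave:weak} together with the stated continuity for smooth data (e.g.\ $g_0\in H^2(\Omega)\cap H^1_0(\Omega)$, $g_1\in H^1_0(\Omega)$, $f\in\cD(Q)$), for which the Galerkin solution is a strong solution, $\partial_t u\in L^2(H^1_0(\Omega))$ is an admissible test function, and the energy identity -- hence continuity of the energy, hence strong continuity via weak continuity plus continuity of the norm -- holds rigorously; then approximate general data in $H^1_0(\Omega)\times L^2(\Omega)\times L^2(Q)$ by such data and use the a priori bound~\eqref{eq:wave:weak} to conclude that the corresponding solutions form a Cauchy sequence in $C(\overline J;H^1_0(\Omega))\cap C^1(\overline J;L^2(\Omega))$, whose limit is the desired solution. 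This regularization step -- not the Galerkin estimate -- is the main obstacle, and is also the reason the cited reference operates in precisely this weak framework.
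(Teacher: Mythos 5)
The paper gives no proof of this theorem beyond the citation to \cite{LionsM_72}, and your Faedo--Galerkin reconstruction (energy estimate with an eigenfunction basis, weak-* passage to the limit, the Ladyzhenskaya-type test function $v(t)=\int_t^s u\,dr$ for uniqueness, and a data-regularization argument for the strong continuity $u\in C(\overline J;H^1_0(\Omega))\cap C^1(\overline J;L^2(\Omega))$) is precisely the standard argument behind the cited result -- indeed the same machinery the paper itself reuses from Evans in Lemma~\ref{lem:regularity}. Your sketch is correct, including the accurate observation that the delicate point is upgrading weak to strong continuity, which the a priori bounds alone do not give.
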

Using the change of variables $t\mapsto T-t$, we may also impose terminal conditions at the
final time $t=T$ instead of initial conditions at $t=0$.
The next regularity result is obtained by following the arguments from~\cite[Section~7.2]{Evans}.
We stress that we do not require $\Omega$ to be smooth compared to~\cite[Section~7.2]{Evans}
(which is needed to show improved regularity in the space variable).
Further, we state the result with vanishing initial conditions, although it
is of course valid with vanishing terminal conditions.
\begin{lemma}\label{lem:regularity}
  Suppose that $u\in L^2(H^1_0(\Omega))\cap H^1(L^2(\Omega))\cap H^2(H^{-1}(\Omega))$ is
  the unique weak solution of the wave equation~\eqref{eq:wave} with $f\in H^1(L^2(\Omega))$ and
  initial conditions $u(0)=0$, $\partial_t u(0)=0$. Then, besides~\eqref{eq:wave:weak} with $g_0=0$, $g_1=0$ we have that $\partial_t u\in L^\infty(H_0^1(\Omega))$, $\partial_{tt}u\in L^\infty(L^2(\Omega))$ with
  \begin{align*}
    \| \nablasp \partial_t u\|_{L^\infty(\bL^2(\Omega))} + \| \partial_{tt} u\|_{L^\infty(L^2(\Omega))}\lesssim \norm{\partial_t f}{L^2(Q)}.
  \end{align*}
\end{lemma}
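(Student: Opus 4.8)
The plan is to differentiate the weak wave equation in time and apply the basic energy estimate to the differentiated problem. Since $f \in H^1(L^2(\Omega))$ and the initial data vanish, set $w := \partial_t u$. Formally, $w$ solves the wave equation with right-hand side $\partial_t f \in L^2(L^2(\Omega))$ and with initial conditions $w(0) = \partial_t u(0) = 0$ and $\partial_t w(0) = \partial_{tt} u(0)$. The second initial condition must be read off from the equation at $t=0$: since $\partial_{tt}u = \Deltasp u + f$ in $L^2(H^{-1}(\Omega))$ and $u(0)=0$, we get $\partial_t w(0) = \partial_{tt} u(0) = \Deltasp u(0) + f(0) = f(0)$ in $H^{-1}(\Omega)$. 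The point of assuming $f\in H^1(L^2(\Omega))$ together with $f(0)=0$ — which I would include, or else absorb $f(0)\in L^2(\Omega)$ into the data bound using $\|f(0)\|_{L^2(\Omega)} \lesssim \|f\|_{H^1(L^2(\Omega))}$ — is to make $\partial_t w(0)$ lie in $L^2(\Omega)$ rather than merely $H^{-1}(\Omega)$, so that Theorem~\ref{thm:wave:weak} applies to $w$ with data $g_0 = 0 \in H^1_0(\Omega)$, $g_1 = \partial_t w(0)\in L^2(\Omega)$ and produces $w \in C(\overline J; H^1_0(\Omega))\cap C^1(\overline J; L^2(\Omega))$ with the stability bound $\|w\|_{L^\infty(H^1_0(\Omega))} + \|\partial_t w\|_{L^\infty(L^2(\Omega))} \lesssim \|\partial_t f\|_{L^2(Q)} + \|f(0)\|_{L^2(\Omega)} \lesssim \|\partial_t f\|_{L^2(Q)}$. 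Translating back, $\|\nablasp\partial_t u\|_{L^\infty(\bL^2(\Omega))} + \|\partial_{tt}u\|_{L^\infty(L^2(\Omega))} \lesssim \|\partial_t f\|_{L^2(Q)}$, which is the claim.

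The first real step is therefore to justify that $w = \partial_t u$ genuinely solves the weak wave equation~\eqref{eq:wave} with the stated data — i.e., that one may differentiate the variational identity in time. I would do this by testing~\eqref{eq:wave} with test functions of the form $v = \varphi'\psi$ with $\varphi\in\cD(J)$, $\psi\in H^1_0(\Omega)$ (which are dense in $L^2(H^1_0(\Omega))$ by Lemma~\ref{lem:ttrace}(v) or the tensor-product density statements), integrating by parts in time once, and identifying the resulting distributional identity in $t$ as the weak form for $w$ against $\psi$. This uses that $u\in H^2(H^{-1}(\Omega))$ so $\partial_{tt}u\in L^2(H^{-1}(\Omega))$ and $\partial_t u\in H^1(L^2(\Omega))$ from the first-step regularity; the term $\int_0^T\dual{-\Deltasp w}{v}$ is handled via $-\Deltasp w = -\partial_t\Deltasp u$, legitimate since $\Deltasp:H^1_0(\Omega)\to H^{-1}(\Omega)$ is bounded and Lemma~\ref{lem:schwarz}(ii) gives $(\Deltasp u)' = \Deltasp(u')$ as elements of $L^2(H^{-1}(\Omega))$. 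The initial conditions for $w$ follow from $u\in C^1(\overline J;L^2(\Omega))$ (so $w(0)=0$ in $L^2(\Omega)$, hence a fortiori in $H^{-1}(\Omega)$, consistent with $g_0=0$) and from evaluating the equation at $t=0$ using Lemma~\ref{lem:ttrace}(i) for the $H^{-1}(\Omega)$-valued $H^1$ function $\partial_t u$.

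The main obstacle is this justification of time-differentiation at the level of the weak formulation: one has to be careful that the duality pairings and the evaluation of the equation at $t=0$ are all meaningful with the given (limited) regularity, and that no spatial smoothness of $\Omega$ sneaks in — which is exactly the point the authors emphasize, namely that unlike in~\cite[Section~7.2]{Evans} we do not upgrade regularity in space, only in time. A clean way to avoid delicate density/limiting arguments is the Galerkin route: approximate via eigenfunctions of the Dirichlet Laplacian (or any Galerkin basis of $H^1_0(\Omega)$), prove the differentiated energy estimate $\frac{d}{dt}\big(\|\nablasp\partial_t u_m\|^2 + \|\partial_{tt}u_m\|^2\big) \lesssim \|\partial_t f\|\,\|\partial_{tt}u_m\|$ uniformly in the discretization index $m$ by testing the discrete equation with $\partial_{ttt}u_m$ (all legitimate at the finite-dimensional level), integrate in time using $\partial_{tt}u_m(0)\to f(0)$ in $L^2(\Omega)$ with $\|f(0)\|_{L^2(\Omega)}\lesssim\|\partial_t f\|_{L^2(Q)}$ (Lemma~\ref{lem:ttrace}(i) again, using $f(0)=0$ or absorbing it), and pass to the limit via weak-$*$ lower semicontinuity of the $L^\infty$-in-time norms; the limit is identified with $u$ by uniqueness in Theorem~\ref{thm:wave:weak}. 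Everything else — the Grönwall/Cauchy–Schwarz bookkeeping to get from the differential inequality to the $L^\infty$ bound, and translating $w$-estimates back to $u$-estimates — is routine and I would not spell it out in detail.
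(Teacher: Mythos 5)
Your fallback Galerkin route is essentially the paper's own proof: the authors take a Galerkin basis orthonormal in $L^2(\Omega)$ and orthogonal in $H^1_0(\Omega)$, invoke the differentiated energy estimate from the proof of Theorem~5 in \cite[Section~7.2]{Evans}, control the initial terms exactly as you do ($\partial_t u_m(0)=0$ from the vanishing initial coefficients, and $\|\partial_{tt}u_m(0)\|_\Omega=\|f(0)\|_\Omega$ by evaluating the discrete equation at $t=0$ with $u_m(0)=0$), and pass to the limit --- and your remark that $\|f(0)\|_\Omega$ must be absorbed (it vanishes in the paper's application, where $f$ has compact support in time) is a fair reading of the stated bound. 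One small correction: the estimate $\tfrac{d}{dt}\bigl(\|\nablasp\partial_t u_m\|_\Omega^2+\|\partial_{tt}u_m\|_\Omega^2\bigr)\lesssim\|\partial_t f\|_\Omega\,\|\partial_{tt}u_m\|_\Omega$ is obtained by differentiating the Galerkin system in time and testing with $\partial_{tt}u_m$, not by testing the original discrete equation with $\partial_{ttt}u_m$.
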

\begin{proof}
  Let $u_m\colon [0,T]\to H_0^1(\Omega)$ be given by
  \begin{align*}
    u_m(t) = \sum_{j=1}^m d_j^m(t)w_j
  \end{align*}
  where $(w_j)_{j\in\N}$ denotes a complete orthonormal system in $L^2(\Omega)$, i.e., $\vdual{w_j}{w_k}_\Omega = \delta_{jk}$ and orthogonal system in $H_0^1(\Omega)$, i.e., $\vdual{\nablasp w_j}{\nablasp w_k}_\Omega = 0$ for $j\neq k$. 
  Coefficients $d_j^m$ are uniquely determined through the wave equation
  \begin{align}\label{eq:regularity:proof1}
    \vdual{\partial_{tt}u_m(t)}v_\Omega + \vdual{\nablasp u_m(t)}{\nablasp v}_\Omega = \vdual{f(t)}v_\Omega \quad\forall v\in H_0^1(\Omega)
  \end{align}
  with $d_j^m(0) = 0$, $(d_j^m)'(0) = 0$, see~\cite[Section~7.2]{Evans}.
  Particularly, in~\cite[Section~7.2.2]{Evans} it is shown that there exists a subsequence which weakly converges to the variational solution $u$ of~\eqref{eq:wave}.
  Following the arguments in the proof of~\cite[Section~7.2, Theorem~5]{Evans} we find that
  \begin{align*}
    \|\partial_{tt}u_m(t)\|_\Omega^2 + \|\nablasp \partial_t u_m(t)\|_\Omega^2 \lesssim \|\partial_t f\|_Q^2 + \|\nablasp \partial_t u_m(0)\|_\Omega^2 + \|\partial_{tt}u_m(0)\|_\Omega^2.
  \end{align*}
  It only remains to estimate the terms $\|\nablasp \partial_t u_m(0)\|_\Omega$ and $\|\partial_{tt}u_m(0)\|_\Omega$. For the former note that $(d_j^m)'(0) = 0$, therefore, $\partial_t u_m(0) = 0$. For the latter, we evaluate~\eqref{eq:regularity:proof1} at $t=0$ and get that
  \begin{align*}
    \vdual{\partial_{tt}u_m(0)}v_\Omega = \vdual{f(0)}{v}_\Omega - \vdual{\nablasp u_m(0)}{\nablasp v}_\Omega \quad\forall v\in H_0^1(\Omega).
  \end{align*}
  Since $u_m(0) = 0$ and $H_0^1(\Omega)$ is dense in $L^2(\Omega)$ we conclude that $\|\partial_{tt}u_m(0)\|_\Omega = \|f(0)\|_\Omega$. 
  Putting all estimates together and passing to the limit $m\to\infty$ proves the asserted statement.
\end{proof}
%--------------------------------------------------------------------------------------------------------
Following~\cite[Ch.~3, Sec.~9]{LionsM_72}, we may use Theorem~\ref{thm:wave:weak}
to define the linear space
\begin{align*}
  \xX = \left\{ \varphi = (\partial_{tt} - \Deltasp)^{-1}g \mid g \in L^2(L^2(\Omega)), \varphi(T)=\partial_t\varphi(T)=0 \right\}.
\end{align*}
%--------------------------------------------------------------------------------------------------------
Next,~\cite[Ch.~3, Sec.~9, Thm.~9.3, Thm.~9.4, Remarks~9.5 and~9.11]{LionsM_72} state the following.
\begin{theorem}[Ultraweak wave equation]\label{lm:wave:uw}
  Let $f\in L^2(H^{-1}(\Omega))$, $g_0\in L^2(\Omega)$, and $g_1\in H^{-1}(\Omega)$. There is a unique function
  $u\in L^2(L^2(\Omega))\cap H^1(H^{-1}(\Omega))$ such that
  \begin{align*}
    &\int_0^T \vdual{u(s)}{\partial_{tt}\varphi(s)-\Deltasp \varphi(s)}_\Omega\,ds \\
    &\qquad= \int_0^T \dual{f(s)}{\varphi(s)}_{H^{-1}(\Omega)\times H^1_0(\Omega)}\,ds + \dual{g_1}{\varphi(0)}_{H^{-1}(\Omega)\times H^1_0(\Omega)} -\vdual{g_0}{\partial_t\varphi(0)}_{\Omega},
  \end{align*}
  for all $\varphi\in\xX$ with $\Deltasp \varphi\in L^2(L^2(\Omega))$. There holds
  \begin{align*}
    \| u \|_{L^2(L^2(\Omega))} + \| \partial_t u \|_{L^2(H^{-1}(\Omega))} \lesssim \| f \|_{L^2(H^{-1}(\Omega))}
    + \| g_0 \|_{L^2(\Omega)} + \| g_1 \|_{H^{-1}(\Omega)}.
  \end{align*}
  In addition, $u\in C(\overline J;L^2(\Omega))\cap C^1(\overline J;H^{-1}(\Omega))$ with
  $u(0) = g_0$, $\partial_t u(0)=g_1$.
\end{theorem}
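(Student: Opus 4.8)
I would use the transposition (duality) method of Lions--Magenes, for which $\xX$ is exactly the space of adjoint states. First let $S\colon L^2(Q)\to\xX$ be the solution operator of the \emph{backward} wave equation: $\varphi=Sg$ is the unique $\varphi\in L^2(H^1_0(\Omega))\cap H^1(L^2(\Omega))\cap H^2(H^{-1}(\Omega))$ with $\partial_{tt}\varphi-\Deltasp\varphi=g$ and $\varphi(T)=\partial_t\varphi(T)=0$, which exists by \Cref{thm:wave:weak} and the time-reversal remark following it; since the terminal data vanish, \eqref{eq:wave:weak} gives
\begin{align*}
  \|\varphi\|_{L^\infty(H^1_0(\Omega))}+\|\partial_t\varphi\|_{L^\infty(L^2(\Omega))}+\|\partial_{tt}\varphi\|_{L^2(H^{-1}(\Omega))}\lesssim\|g\|_{L^2(Q)}.
\end{align*}
Then I would define $F\colon L^2(Q)\to\R$ by
\begin{align*}
  F(g):=\int_0^T\dual{f(s)}{(Sg)(s)}_{H^{-1}(\Omega)\times H^1_0(\Omega)}\,ds+\dual{g_1}{(Sg)(0)}_{H^{-1}(\Omega)\times H^1_0(\Omega)}-\vdual{g_0}{\partial_t(Sg)(0)}_{\Omega},
\end{align*}
which is linear; bounding the three terms by $\|f\|_{L^2(H^{-1}(\Omega))}\|Sg\|_{L^2(H^1_0(\Omega))}$, $\|g_1\|_{H^{-1}(\Omega)}\|(Sg)(0)\|_{H^1_0(\Omega)}$ and $\|g_0\|_{L^2(\Omega)}\|\partial_t(Sg)(0)\|_{L^2(\Omega)}$ and invoking the displayed estimate yields $|F(g)|\lesssim(\|f\|_{L^2(H^{-1}(\Omega))}+\|g_0\|_{L^2(\Omega)}+\|g_1\|_{H^{-1}(\Omega)})\|g\|_{L^2(Q)}$. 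Riesz representation then provides a unique $u\in L^2(Q)=L^2(L^2(\Omega))$ with $F(g)=\vdual{u}{g}_{L^2(Q)}$ for all $g\in L^2(Q)$ and $\|u\|_{L^2(L^2(\Omega))}=\|F\|\lesssim\|f\|_{L^2(H^{-1}(\Omega))}+\|g_0\|_{L^2(\Omega)}+\|g_1\|_{H^{-1}(\Omega)}$; and since $\partial_{tt}\varphi-\Deltasp\varphi=g$ for $\varphi=Sg$, the relation $F(g)=\vdual{u}{g}_{L^2(Q)}$ is exactly the asserted variational identity.

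For uniqueness, if $u$ solves the variational identity with $f=0$, $g_0=0$, $g_1=0$, then $\vdual{u}{\partial_{tt}\varphi-\Deltasp\varphi}_{L^2(Q)}=0$ for all admissible $\varphi$, so it suffices that $\{\partial_{tt}\varphi-\Deltasp\varphi\mid\varphi\in\xX,\ \Deltasp\varphi\in L^2(L^2(\Omega))\}$ is dense in $L^2(Q)$. It already contains $\DD(Q)$: for $g\in\DD(Q)$, differentiating the backward equation in time shows that $\partial_t(Sg)$ is again a backward weak solution (with vanishing terminal data, since $g\equiv0$ near $t=T$), hence $\partial_t(Sg)\in H^1(L^2(\Omega))$, so $\partial_{tt}(Sg)\in L^2(L^2(\Omega))$ and $\Deltasp(Sg)=\partial_{tt}(Sg)-g\in L^2(L^2(\Omega))$. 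Therefore $u=0$.

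It remains to obtain $\partial_t u\in L^2(H^{-1}(\Omega))$ with its norm bound, the temporal continuity, and the initial conditions. Testing the variational identity against $\varphi\in\DD(Q)$ shows $\partial_{tt}u-\Deltasp u=f$ in $\DD'(Q)$, so $\partial_{tt}u\in L^2(H^{-2}(\Omega))$; the Lions--Magenes intermediate-derivatives theorem applied to $u\in L^2(L^2(\Omega))\cap H^2(H^{-2}(\Omega))$ (using $[L^2(\Omega),H^{-2}(\Omega)]_{1/2}=H^{-1}(\Omega)$) gives $u\in H^1(H^{-1}(\Omega))$ together with $\|\partial_t u\|_{L^2(H^{-1}(\Omega))}\lesssim\|u\|_{L^2(L^2(\Omega))}+\|f\|_{L^2(H^{-1}(\Omega))}$. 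The genuinely delicate point --- and the one I expect to be the main obstacle --- is the ``hidden'' regularity $u\in C(\overline J;L^2(\Omega))\cap C^1(\overline J;H^{-1}(\Omega))$ and the identification $u(0)=g_0$, $\partial_t u(0)=g_1$, since the energy and interpolation estimates available for $u\in L^2(L^2(\Omega))\cap H^1(H^{-1}(\Omega))$ only yield $u\in C(\overline J;H^{-1/2}(\Omega))$. I would handle this by recasting the problem as a first-order system for $(u,\partial_t u)$ on the shifted energy space $L^2(\Omega)\times H^{-1}(\Omega)$: the wave operator is skew-adjoint on $H^1_0(\Omega)\times L^2(\Omega)$ and, by duality, generates a $C_0$-group on $L^2(\Omega)\times H^{-1}(\Omega)$ as well, so Duhamel's formula with $(0,f)\in L^1(J;H^{-1}(\Omega))$ places $(u,\partial_t u)$ in $C(\overline J;L^2(\Omega)\times H^{-1}(\Omega))$ and fixes its value at $t=0$; alternatively, one can refine the transposition by testing, for fixed $t_0\in\overline J$, with functions solving the backward equation only on $(0,t_0)$, obtaining an $L^\infty$-in-time bound directly and passing to the limit from smooth data.
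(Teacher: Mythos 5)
The paper does not actually prove this theorem: it is quoted directly from Lions--Magenes \cite[Ch.~3, Sec.~9, Thms.~9.3, 9.4, Remarks~9.5, 9.11]{LionsM_72}, so there is no in-paper argument to compare against. Your proposal is essentially a reconstruction of the transposition method used in that reference: solve the backward problem via Theorem~\ref{thm:wave:weak} (time reversal), bound the resulting linear functional, and apply Riesz representation. That part is correct, and your density argument for uniqueness --- time-differentiating the backward problem for $g\in\DD(Q)$ to get $\Deltasp (Sg)\in L^2(L^2(\Omega))$ --- is sound; it is precisely the time-reversed analogue of the regularity result the paper proves separately as Lemma~\ref{lem:regularity}.

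Two points in your sketch would need care if written out. First, the interpolation identity $[L^2(\Omega),H^{-2}(\Omega)]_{1/2}=H^{-1}(\Omega)$ is sensitive to the realization of $H^{-2}(\Omega)$ on a polygonal, possibly non-convex, domain, where the domain of the Dirichlet Laplacian is strictly larger than $H^2(\Omega)\cap H^1_0(\Omega)$; the clean route is to take $Y$ as the dual of the domain of the Dirichlet Laplacian and use its spectral calculus, for which the half-way interpolation space is the dual of the form domain $H^1_0(\Omega)$, i.e.\ $H^{-1}(\Omega)$. Second, the continuity in time and the identification $u(0)=g_0$, $\partial_t u(0)=g_1$ are so far only a plan: the semigroup/extrapolation route works, but you must still identify your transposition solution with the mild solution. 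Concretely, for a dense set of smooth data show that the weak solution from Theorem~\ref{thm:wave:weak} satisfies the transposition identity (integration by parts), then pass to the limit using your a priori bound and the uniqueness you have already established; only then do the continuity and the initial values transfer to $u$. With these additions the argument is complete and coincides in substance with the Lions--Magenes proof the paper relies on.
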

%--------------------------------------------------------------------------------------------------------
\section{First order acoustic wave equations}\label{sec:main}
%--------------------------------------------------------------------------------------------------------
In a first step, we define the following algebraic subspace of $L^2(Q)^{1+d}$,
\begin{align*}
  \wilde V:&=\left\{ (v,\ssigma)\in L^2(Q)^{1+d} \mid \cA_0(v,\ssigma)\in L^2(Q)^{1+d}\right\}
  \\
  &= \{ (v,\ssigma) \in L^2(Q)^{1+d} \mid \stdiv(v,-\ssigma)\in L^2(Q), \, \partial_t\ssigma- \nablasp v\in \bL^2(Q) \}.
\end{align*} 
Let $\|\cdot\|_{\wilde V}$ denote the graph norm corresponding to $\wilde V$, i.e.,  for $(v,\ssigma)\in \wilde V$,
\begin{align*}
  \|(v,\ssigma)\|_{\wilde V}^2 := \|(v,\ssigma)\|_{L^2(Q)\times \bL^2(Q)}^2 +
  \|\stdiv(v,-\ssigma)\|_{L^2(Q)}^2 + \|\partial_t\ssigma-\nablasp v\|_{\bL^2(Q)}^2. 
\end{align*}
The next result is immediate by virtue of closedness of differential operators, cf.~\cite{Jensen_PAMM_06}.
\begin{lemma}
  The space $(\wilde V,\|\cdot\|_{\wilde V})$ is a Hilbert space.
\end{lemma}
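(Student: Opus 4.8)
The plan is to show that $\wilde V$ is complete with respect to $\|\cdot\|_{\wilde V}$, since it is evidently a linear space equipped with a norm induced by the inner product
\begin{align*}
  \langle (v,\ssigma),(w,\ttau)\rangle_{\wilde V} := \langle (v,\ssigma),(w,\ttau)\rangle_{L^2(Q)\times\bL^2(Q)} + \langle \stdiv(v,-\ssigma),\stdiv(w,-\ttau)\rangle_{L^2(Q)} + \langle \partial_t\ssigma-\nablasp v,\partial_t\ttau-\nablasp w\rangle_{\bL^2(Q)},
\end{align*}
so only completeness is at issue. The cleanest route is to invoke Lemma~\ref{lem:subspace}: take $X := L^2(Q)^{1+d}$, let $Z := L^2(Q)\times\bL^2(Q)$, let $Y$ be a space into which both $Z$ and the range of the differential operator map continuously, and set $T(v,\ssigma) := \cA_0(v,\ssigma) = (\stdiv(v,-\ssigma),\,\partial_t\ssigma-\nablasp v)$, so that $\wilde V = \{(v,\ssigma)\in X \mid T(v,\ssigma)\in Z\}$ with exactly the stated graph norm. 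What must be checked is that $T$ is linear and continuous from $X$ into a suitable ambient Banach space $Y$ with $Z\hookrightarrow Y$ continuously.

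The natural choice for $Y$ is a product of negative-order spaces: $\partial_t\colon L^2(Q)\to H^{-1}(J;L^2(\Omega))$ is bounded, and the spatial operators $\divsp\colon \bL^2(Q)\to L^2(H^{-1}(\Omega))$ and $\nablasp\colon L^2(Q)\to L^2(\bH^{-1}(\Omega))$ are bounded (both facts recorded in the Preliminaries). Hence $\stdiv(v,-\ssigma)=\partial_t v - \divsp\ssigma$ is bounded from $X$ into, say, $H^{-1}(J;L^2(\Omega)) + L^2(H^{-1}(\Omega))$, and $\partial_t\ssigma-\nablasp v$ is bounded from $X$ into $H^{-1}(J;\bL^2(\Omega)) + L^2(\bH^{-1}(\Omega))$; take $Y$ to be the product of these two distribution spaces (or, more simply, a single large space of vector-valued distributions on $Q$ containing all the relevant derivatives). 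Both components of $Z=L^2(Q)\times\bL^2(Q)$ embed continuously into the corresponding components of $Y$, so the hypotheses of Lemma~\ref{lem:subspace} are met and it yields that $\wilde V$ is a Banach space under $\|\cdot\|_{\wilde V}$. Completeness plus the parallelogram identity (which $\|\cdot\|_{\wilde V}$ satisfies, being built from $L^2$-type norms) then upgrades this to a Hilbert space, which is the assertion.

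Alternatively, and perhaps more transparently, one can argue completeness directly: let $(v_k,\ssigma_k)$ be Cauchy in $\wilde V$. Then $(v_k,\ssigma_k)$ is Cauchy in $L^2(Q)\times\bL^2(Q)$, $\stdiv(v_k,-\ssigma_k)$ is Cauchy in $L^2(Q)$, and $\partial_t\ssigma_k-\nablasp v_k$ is Cauchy in $\bL^2(Q)$; by completeness of these $L^2$-spaces there are limits $(v,\ssigma)$, $\rho$, and $\boldsymbol\mu$ respectively. Since the differential operators $\stdiv$ and $\partial_t(\cdot)-\nablasp(\cdot)$ are continuous from $L^2(Q)^{1+d}$ into $\cD'(Q)$ (and into $\cD'(Q)^d$), passing to the limit in the sense of distributions gives $\stdiv(v,-\ssigma)=\rho\in L^2(Q)$ and $\partial_t\ssigma-\nablasp v=\boldsymbol\mu\in\bL^2(Q)$; hence $(v,\ssigma)\in\wilde V$ and $(v_k,\ssigma_k)\to(v,\ssigma)$ in $\wilde V$. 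The only mildly delicate point — really the whole content of the lemma — is the closedness/continuity of the differential operators as maps into a space of distributions in which the $L^2$-limit is automatically recognized; this is exactly the ``closedness of differential operators'' remark cited before the statement and is entirely routine. I therefore expect no genuine obstacle; the proof is a one-line application of Lemma~\ref{lem:subspace} together with the boundedness of $\partial_t$, $\nablasp$, and $\divsp$ into negative-order spaces.
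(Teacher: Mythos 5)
Your proof is correct and is essentially the paper's argument: the paper gives no details beyond remarking that the result is ``immediate by virtue of closedness of differential operators,'' which is precisely your direct Cauchy-sequence argument (pass to the limit in $\cD'(Q)$ and recognize the $L^2$ limits as the distributional derivatives), and your alternative via Lemma~\ref{lem:subspace} is the same closedness packaged through the graph-space lemma the paper itself uses later for $V$. The only point requiring care is that the ambient space $Y$ in Lemma~\ref{lem:subspace} must be a Banach space, so ``a single large space of distributions'' should be taken concretely as, e.g., $H^{-1}(Q)\times\bH^{-1}(Q)$ rather than $\cD'(Q)$ --- which your negative-order Bochner-space choices already accomplish.
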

In order to include initial conditions we need access to traces. The following result shows existence of a well-defined trace operator on $\wilde V$. 
\begin{lemma}\label{lem:trace}
  The estimate
  \begin{align*}
    \|\partial_t v\|_{L^2(H^{-1}(\Omega))} + \|\partial_t \ssigma\|_{L^2(\bH^{-1}(\Omega))}
    \lesssim \|(v,\ssigma)\|_{\wilde V}
  \end{align*}
  holds for all $(v,\ssigma)\in \wilde V$. In particular, the trace operator
  \begin{align*}
    \gamma_0\colon \begin{cases}
      \wilde V &\to H^{-1}(\Omega)\times \bH^{-1}(\Omega), \\
    (v,\ssigma)&\mapsto (v(0),\ssigma(0))
  \end{cases}
  \end{align*}
  is a well-defined linear and bounded operator.
\end{lemma}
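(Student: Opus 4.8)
The plan is to first establish the displayed estimate and then read off the trace statement from Lemma~\ref{lem:ttrace}(i). The key observation is that the first-order structure of $\cA_0$ expresses the (a priori only distributional) time derivatives through quantities that are all controlled by $\|(v,\ssigma)\|_{\wilde V}$: on $Q$ we have, in the sense of distributions,
\[
\partial_t v = \stdiv(v,-\ssigma) + \divsp\ssigma, \qquad \partial_t \ssigma = \bigl(\partial_t\ssigma - \nablasp v\bigr) + \nablasp v,
\]
where $\stdiv(v,-\ssigma)\in L^2(Q)$ and $\partial_t\ssigma-\nablasp v\in\bL^2(Q)$ by the very definition of $\wilde V$, while $\divsp\ssigma$ and $\nablasp v$ are understood as the images under the Bochner lifts of the bounded spatial operators $\divsp\colon\bL^2(\Omega)\to H^{-1}(\Omega)$ and $\nablasp\colon L^2(\Omega)\to\bH^{-1}(\Omega)$, so that $\|\divsp\ssigma\|_{L^2(H^{-1}(\Omega))}\lesssim\|\ssigma\|_{\bL^2(Q)}$ and $\|\nablasp v\|_{L^2(\bH^{-1}(\Omega))}\lesssim\|v\|_{L^2(Q)}$. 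Since $L^2(Q)\hookrightarrow L^2(H^{-1}(\Omega))$ and $\bL^2(Q)\hookrightarrow L^2(\bH^{-1}(\Omega))$ continuously, the two identities then exhibit $\partial_t v\in L^2(H^{-1}(\Omega))$ and $\partial_t\ssigma\in L^2(\bH^{-1}(\Omega))$, and the triangle inequality yields the claimed bound.

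Making this rigorous is the only point requiring care, and it amounts to bookkeeping with distributions. Writing $\iota$ for the continuous, dense embedding $L^2(\Omega)\hookrightarrow H^{-1}(\Omega)$ (so $\iota v\in L^2(H^{-1}(\Omega))$ with $\|\iota v\|_{L^2(H^{-1}(\Omega))}\lesssim\|v\|_{L^2(Q)}$), I would show $\iota v\in H^1(H^{-1}(\Omega))$ with $(\iota v)' = \stdiv(v,-\ssigma)+\divsp\ssigma$ by testing the defining relation
\[
\int_Q \stdiv(v,-\ssigma)\,\phi \;=\; -\int_Q v\,\partial_t\phi \;+\; \int_Q \ssigma\cdot\nablasp\phi
\]
against tensor-product functions $\phi = \varphi\otimes\psi$ with $\varphi\in\DD(J)$, $\psi\in\DD(\Omega)$ (which are dense in $\DD(Q)$), rewriting the spatial integrals as $H^{-1}(\Omega)\times H^1_0(\Omega)$ dualities, using density of $\DD(\Omega)$ in $H^1_0(\Omega)$, and pulling the scalar factor $\varphi$ through the time integral via Lemma~\ref{lem:ttrace}(iii); this is precisely the statement that $(\iota v)'(\varphi)=\int_J\bigl(\stdiv(v,-\ssigma)+\divsp\ssigma\bigr)(s)\,\varphi(s)\,ds$ in $H^{-1}(\Omega)$ for all $\varphi\in\DD(J)$. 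Applying the same argument to $\partial_t\ssigma-\nablasp v\in\bL^2(Q)$ shows $\iota\ssigma\in H^1(\bH^{-1}(\Omega))$ with $(\iota\ssigma)'=(\partial_t\ssigma-\nablasp v)+\nablasp v$. Collecting the bounds gives
\[
\|\iota v\|_{H^1(H^{-1}(\Omega))} + \|\iota\ssigma\|_{H^1(\bH^{-1}(\Omega))} \lesssim \|(v,\ssigma)\|_{\wilde V},
\]
which contains the asserted estimate.

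Finally, for the trace operator I would invoke Lemma~\ref{lem:ttrace}(i): the continuous embeddings $H^1(H^{-1}(\Omega))\hookrightarrow C(\overline J;H^{-1}(\Omega))$ and $H^1(\bH^{-1}(\Omega))\hookrightarrow C(\overline J;\bH^{-1}(\Omega))$ make $v(0):=(\iota v)(0)$ and $\ssigma(0):=(\iota\ssigma)(0)$ well defined, with $\|v(0)\|_{H^{-1}(\Omega)}+\|\ssigma(0)\|_{\bH^{-1}(\Omega)}\lesssim\|\iota v\|_{H^1(H^{-1}(\Omega))}+\|\iota\ssigma\|_{H^1(\bH^{-1}(\Omega))}\lesssim\|(v,\ssigma)\|_{\wilde V}$; linearity of $\gamma_0$ is immediate from linearity of $v\mapsto\iota v$ and of evaluation at $t=0$. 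The expected main obstacle is not a deep one: it is exactly the distributional identification in the second paragraph, i.e. verifying that the merely distributional $\partial_t v$ and $\partial_t\ssigma$ are genuinely represented by the $L^2$-in-time functions displayed above so that Lemma~\ref{lem:ttrace}(i) is applicable — everything else is soft functional analysis already collected in Section~\ref{sec:preliminaries}.
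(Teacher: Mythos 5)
Your proof is correct and follows essentially the same route as the paper: decompose $\partial_t v = \stdiv(v,-\ssigma)+\divsp\ssigma$ and $\partial_t\ssigma = (\partial_t\ssigma-\nablasp v)+\nablasp v$, bound the spatial terms via the bounded operators $\divsp\colon L^2(\bL^2(\Omega))\to L^2(H^{-1}(\Omega))$ and $\nablasp\colon L^2(L^2(\Omega))\to L^2(\bH^{-1}(\Omega))$, and then invoke Lemma~\ref{lem:ttrace} to obtain continuity into $C(\overline J;H^{-1}(\Omega))$ and hence the trace bound. The extra care you take with the distributional identification is exactly what the paper delegates to its small preliminary lemma on the injectivity of $f'\mapsto\partial_t f$ and otherwise leaves implicit, so there is no substantive difference.
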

\begin{proof}
  We follow similar arguments as in~\cite[Lemma~2]{AntonicBV_13} resp.~\cite[Lemma~2.1]{GantnerS_21}. 
  Let $(v,\ssigma)\in \wilde V$ be given. 
  First, note that $\divsp\ssigma \in L^2(H^{-1}(\Omega))$ with 
  \begin{align*}
    \|\divsp\ssigma\|_{L^2(H^{-1}(\Omega))} \lesssim \|\ssigma\|_{\bL^2(Q)}.
  \end{align*}
  Since $\stdiv(v,-\ssigma) \in L^2(Q)$ we get that $\partial_t v \in L^2(H^{-1}(\Omega))$ with
  \begin{align*}
    \|\partial_t v\|_{L^2(H^{-1}(\Omega))} &\leq \|\stdiv(v,-\ssigma)\|_{L^2(H^{-1}(\Omega))} + \|\divsp\ssigma\|_{L^2(H^{-1}(\Omega))}
    \\
    &\lesssim \|\stdiv(v,-\ssigma)\|_{L^2(Q)} + \|\ssigma\|_{\bL^2(Q)} \lesssim \|(v,\ssigma)\|_{\wilde V}.
  \end{align*}
  Lemma~\ref{lem:ttrace} shows that $v\in C^0(\overline J;H^{-1}(\Omega))$, thus, $v(0)\in H^{-1}(\Omega)$ with 
  \begin{align*}
    \|v(0)\|_{H^{-1}(\Omega)} \lesssim \|v\|_{L^2(H^{-1}(\Omega))} + \|\partial_t v\|_{L^2(H^{-1}(\Omega))} \lesssim \|(v,\ssigma)\|_{\wilde V}
  \end{align*}
  It remains to prove analogous estimates for $\partial_t\ssigma$. 
  They follow with similar arguments as before. Therefore, we omit further details.
%  Since 
%  \begin{align*}
%    \|\nablasp v\|_{L^2(H^{-1}(\Omega))} = \sup_{0\neq \bw\in L^2(\bH_0^1(\Omega))} \frac{\vdual{v}{\divsp\bw}_{L^2(Q)}}{\|\nablasp\bw\|_{\bL^2(Q)^d}} \lesssim \|v\|_{L^2(Q)}
%  \end{align*}
%  and by definition $\partial_t \ssigma-\nablasp v\in \bL^2(Q)$ we find that $\partial_t\ssigma \in L^2(\bH^{-1}(\Omega))$. 
%  Arguing as before we obtain
%  \begin{align*}
%    \|\ssigma(0)\|_{\bH^{-1}(\Omega)} \lesssim \|\ssigma\|_{H^1(\bH^{-1}(\Omega))} \lesssim \|(v,\ssigma)\|_{\wilde V}
%  \end{align*}
%  which finishes the proof.
\end{proof}

In order to allow for the incorporation of initial conditions, we define in a second step
the space
\begin{align*}
  V := \left\{ (v,\ssigma)\in \wilde V \mid \gamma_0(v,\ssigma)\in L^2(\Omega)\times \bL^2(\Omega) \right\}
\end{align*}
with graph norm $\|(v,\ssigma)\|_{V}^2 = \|(v,\ssigma)\|_{\wilde V}^2 + \|(v(0),\ssigma(0))\|_{L^2(\Omega)\times \bL^2(\Omega)}^2$.
\begin{lemma}
  The space $(V,\|\cdot\|_{V})$ is a Hilbert space.
\end{lemma}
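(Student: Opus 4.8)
The plan is to show that $(V,\|\cdot\|_V)$ is complete, since it is clearly an inner-product space with the stated norm coming from an inner product. The natural strategy is to realize $V$ as a subspace of the Hilbert space $\wilde V$ of the form covered by Lemma~\ref{lem:subspace}, applied with the continuous (in fact bounded, by Lemma~\ref{lem:trace}) linear operator $T = \gamma_0$.

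Concretely, I would argue as follows. By Lemma~\ref{lem:trace}, the trace operator $\gamma_0\colon \wilde V \to H^{-1}(\Omega)\times\bH^{-1}(\Omega)$ is linear and bounded. The space $L^2(\Omega)\times\bL^2(\Omega)$ embeds continuously (and densely) into $H^{-1}(\Omega)\times\bH^{-1}(\Omega)$. Hence we are exactly in the situation of Lemma~\ref{lem:subspace} with $X = \wilde V$ (a Hilbert, hence Banach, space by the preceding lemma), $Y = H^{-1}(\Omega)\times\bH^{-1}(\Omega)$, $Z = L^2(\Omega)\times\bL^2(\Omega)$, and $T = \gamma_0$. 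Lemma~\ref{lem:subspace} then yields that
\begin{align*}
  W = \left\{ (v,\ssigma)\in\wilde V \mid \gamma_0(v,\ssigma)\in L^2(\Omega)\times\bL^2(\Omega) \right\} = V
\end{align*}
is a Banach space with respect to the squared norm $\|(v,\ssigma)\|_{\wilde V}^2 + \|\gamma_0(v,\ssigma)\|_{L^2(\Omega)\times\bL^2(\Omega)}^2$, which is precisely $\|(v,\ssigma)\|_V^2$.

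Finally, to upgrade ``Banach'' to ``Hilbert'' I would observe that the squared norm $\|\cdot\|_V^2$ is induced by the inner product obtained by adding the $\wilde V$-inner product and the $L^2(\Omega)\times\bL^2(\Omega)$-inner product of the traces; since both summands are genuine inner products (the first because $\wilde V$ is Hilbert, the second because $L^2(\Omega)\times\bL^2(\Omega)$ is Hilbert), their sum is an inner product on $V$ inducing $\|\cdot\|_V$, and completeness has just been established. Hence $(V,\|\cdot\|_V)$ is a Hilbert space.

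There is no real obstacle here: the entire content has been front-loaded into Lemma~\ref{lem:trace} (boundedness of $\gamma_0$) and Lemma~\ref{lem:subspace} (the abstract subspace-completeness statement). The only point requiring a word of care is the verification that the parallelogram-law/inner-product structure survives — i.e. that $\|\cdot\|_V$ is not merely a norm but a Hilbert norm — but this is immediate from writing $\|\cdot\|_V^2$ as a sum of two Hilbert squared-norms. Thus the proof is essentially a one-line citation of Lemma~\ref{lem:subspace} together with the embedding $L^2(\Omega)\times\bL^2(\Omega)\hookrightarrow H^{-1}(\Omega)\times\bH^{-1}(\Omega)$, and I expect the author's proof to be of exactly this form, possibly even omitted as ``analogous to the proof that $\wilde V$ is a Hilbert space.''
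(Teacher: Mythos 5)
Your proposal is correct and follows exactly the paper's argument: the authors also prove this by invoking Lemma~\ref{lem:subspace} with $X=\wilde V$, $Y=H^{-1}(\Omega)\times\bH^{-1}(\Omega)$, $Z=L^2(\Omega)\times\bL^2(\Omega)$, and $T=\gamma_0$. Your extra remark that the Banach norm is in fact induced by the sum of the two inner products, so that $V$ is Hilbert rather than merely Banach, is a small detail the paper leaves implicit but is entirely consistent with its proof.
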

\begin{proof}
  The result is an application of Lemma~\ref{lem:subspace} with $X = \wilde V$, $Z=L^2(\Omega)\times \bL^2(\Omega)$, $Y=H^{-1}(\Omega)\times \bH^{-1}(\Omega)$, and $T=\gamma_0$.
\end{proof}

In a third step, we need to incorporate Dirichlet boundary conditions.
Given that elements of $V$ have reduced regularity, this is not possible via trace operators.
We shall thus restrict to a weak enforcement of homogeneous Dirichlet
boundary conditions in $V$. To that end consider the space 
\begin{align*}
  Y = \left\{\varphi \in H^1(H_0^1(\Omega))\cap H^2(L^2(\Omega)) \cap L^2(H(\Deltasp;\Omega))\mid \varphi(T) = 0,\, \partial_t\varphi(T) = 0\right\} 
\end{align*}
where $L^2(H(\Deltasp;\Omega)) = \{v\in L^2(Q)\mid \Deltasp v \in L^2(Q)\}$, equipped with the norm
\begin{align*}
  \|\varphi\|_Y^2 := \|\nablasp \varphi\|_Q^2 + \|\nablasp \partial_t\varphi\|_Q^2 + \|\Deltasp \varphi\|_Q^2 + \|\partial_{tt}\varphi\|_Q^2.
\end{align*}
Define the linear operator $D\colon V\to Y'$ for $(v,\ssigma)\in V$, $\varphi\in Y$ by
\begin{align*}
  D(v,\ssigma)(\varphi)=  \vdual{\partial_t\ssigma-\nablasp v}{\nablasp\varphi}_Q + \vdual{\ssigma}{\nablasp\partial_t\varphi}_Q
    - \vdual{v}{\Deltasp \varphi}_Q + \vdual{\ssigma(0)}{\nablasp\varphi(0)}_{\Omega}.
\end{align*}
Note that $D$ is continuous, i.e., $\| D(v,\ssigma) \|_{Y'} \lesssim \| (v,\ssigma) \|_{V}$ for all $(v,\ssigma)\in V$, 
so that
\begin{align*}
V_0 := \ker D
\end{align*}
is a closed subspace of $V$, and therefore in particular a Hilbert space.

Operator $D$ is in fact encoding homogeneous Dirichlet boundary conditions as can be seen from the following result.
\begin{lemma}\label{lem:bcdirichlet}
  Let $(v,\ssigma)\in \wilde V$ with $v\in L^2(H^1(\Omega))$. Then, $(v,\ssigma)\in \ker D$ if and only if $v\in L^2(H_0^1(\Omega))$.
\end{lemma}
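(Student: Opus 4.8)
The plan is to prove the equivalence by testing the defining identity of $D(v,\ssigma)$ against suitable test functions and using integration by parts in the space-time cylinder $Q$. Throughout, fix $(v,\ssigma)\in\wilde V$ with the extra regularity $v\in L^2(H^1(\Omega))$; note that then $\nablasp v\in L^2(\bL^2(\Omega))$ is a genuine $L^2$ function (not merely an $H^{-1}$-valued distribution), and the trace $\gamma v(t)\in H^{1/2}(\partial\Omega)$ makes sense for a.e.\ $t$, with $t\mapsto \gamma v(t)$ in $L^2(J;H^{1/2}(\partial\Omega))$.

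\textbf{Step 1: rewrite $D(v,\ssigma)(\varphi)$ using integration by parts.} For $\varphi\in Y$, the term $\vdual{\ssigma}{\nablasp\partial_t\varphi}_Q$ can be integrated by parts in time: since $\ssigma$ has $\partial_t\ssigma$ controlled (from $(v,\ssigma)\in\wilde V$, because $\partial_t\ssigma-\nablasp v\in\bL^2(Q)$ and $\nablasp v\in\bL^2(Q)$, hence $\partial_t\ssigma\in\bL^2(Q)$ under the extra hypothesis) and $\varphi(T)=0$, we get
\begin{align*}
  \vdual{\ssigma}{\nablasp\partial_t\varphi}_Q = -\vdual{\partial_t\ssigma}{\nablasp\varphi}_Q - \vdual{\ssigma(0)}{\nablasp\varphi(0)}_\Omega.
\end{align*}
Substituting this into the formula for $D$, the two boundary-in-time terms $\vdual{\ssigma(0)}{\nablasp\varphi(0)}_\Omega$ cancel, and the $\partial_t\ssigma$ terms combine, leaving
\begin{align*}
  D(v,\ssigma)(\varphi) = -\vdual{\nablasp v}{\nablasp\varphi}_Q - \vdual{v}{\Deltasp\varphi}_Q.
\end{align*}
Now apply Green's formula in space for a.e.\ $t$: since $v(t)\in H^1(\Omega)$ and $\varphi(t)\in H(\Deltasp;\Omega)$, one has $\vdual{\nablasp v(t)}{\nablasp\varphi(t)}_\Omega + \vdual{v(t)}{\Deltasp\varphi(t)}_\Omega = \langle \gamma v(t),\partial_n\varphi(t)\rangle_{\partial\Omega}$, where the right-hand side is the duality pairing on $\partial\Omega$ (note $\varphi(t)\in H^1_0(\Omega)$ so the other boundary term drops). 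Integrating over $J$ yields
\begin{align*}
  D(v,\ssigma)(\varphi) = -\int_J \langle \gamma v(s),\partial_n\varphi(s)\rangle_{\partial\Omega}\,ds.
\end{align*}

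\textbf{Step 2: conclude.} If $v\in L^2(H^1_0(\Omega))$, then $\gamma v(s)=0$ for a.e.\ $s$, so $D(v,\ssigma)(\varphi)=0$ for all $\varphi\in Y$, i.e.\ $(v,\ssigma)\in\ker D$. Conversely, suppose $(v,\ssigma)\in\ker D$; then $\int_J\langle\gamma v(s),\partial_n\varphi(s)\rangle_{\partial\Omega}\,ds = 0$ for all $\varphi\in Y$. It remains to show this forces $\gamma v = 0$ in $L^2(J;H^{1/2}(\partial\Omega))$, which follows if the normal-derivative traces $\{\partial_n\varphi \mid \varphi\in Y\}$ are dense in $L^2(J;H^{-1/2}(\partial\Omega))$ (or at least separate points of $L^2(J;H^{1/2}(\partial\Omega))$). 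This is the one step needing care: one picks $\varphi = \psi(t)\phi(x)$ with $\psi\in\cD(J)$ (so that all terminal conditions $\varphi(T)=\partial_t\varphi(T)=0$ hold automatically and the $H^2(L^2)$, $H^1(H^1_0)$ regularity in time is free) and $\phi\in H^2(\Omega)\cap H^1_0(\Omega)$; the set $\{\partial_n\phi \mid \phi\in H^2(\Omega)\cap H^1_0(\Omega)\}$ is dense in $H^{-1/2}(\partial\Omega)$ (indeed it is all of $H^{1/2}(\partial\Omega)$ when $\Omega$ is smooth, and dense in general by solving a Dirichlet problem with boundary data approximating any target — one can prescribe $\phi$ to be the $H^2$-regular-enough solution, or more simply use that $H^2(\Omega)\cap H^1_0(\Omega)$ has normal traces dense in $L^2(\partial\Omega)$, which suffices since $L^2(\partial\Omega)\hookrightarrow H^{-1/2}(\partial\Omega)$ densely). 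Combined with density of $\cD(J)$ in $L^2(J)$, the tensor products $\psi\phi$ have normal traces dense in $L^2(J;L^2(\partial\Omega))$, hence weak-$*$ total in $L^2(J;H^{1/2}(\partial\Omega))$, forcing $\gamma v=0$, i.e.\ $v\in L^2(H^1_0(\Omega))$.

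\textbf{Main obstacle.} The integration-by-parts manipulations in Step 1 are routine given the regularity bookkeeping (the key point being that the extra hypothesis $v\in L^2(H^1(\Omega))$ upgrades $\partial_t\ssigma$ to an $\bL^2(Q)$ function and makes $\gamma v$ a bona fide function, so Green's formula applies pointwise in $t$ and Fubini/Lemma~\ref{lem:ttrace}(iii) lets us integrate in time). The real content is the converse direction: justifying that vanishing of the pairing against all admissible $\partial_n\varphi$ implies $\gamma v=0$. I expect the cleanest route is the tensor-product test function argument above, reducing the space-time density question to the stationary fact that normal derivatives of $H^2(\Omega)\cap H^1_0(\Omega)$ functions are dense in (a space dual to) $H^{1/2}(\partial\Omega)$ on a Lipschitz domain; this should be citable or follows from solvability of the Dirichlet problem on $\Omega$.
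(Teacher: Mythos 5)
Your Step 1 is exactly the paper's first step: use $v\in L^2(H^1(\Omega))$ to upgrade $\partial_t\ssigma$ to $\bL^2(Q)$, integrate by parts in time so that the $\vdual{\ssigma(0)}{\nablasp\varphi(0)}_\Omega$ terms cancel, and then apply Green's formula in space to reduce $D(v,\ssigma)(\varphi)$ to $-\int_J\dual{\gamma v}{\nablasp\varphi\cdot\nnsp}_{\partial\Omega}\,ds$; the forward implication is then immediate, just as in the paper. Your Step 2 strategy (tensor-product test functions $\mu(t)\psi(x)$ with $\mu\in\DD(J)$ and $\psi\in H^2(\Omega)\cap H^1_0(\Omega)$) is also the paper's strategy, so overall this is essentially the same proof.

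The one place where you leave a real gap is the density of the normal derivatives $\{\nablasp\psi\cdot\nnsp \mid \psi\in H^2(\Omega)\cap H^1_0(\Omega)\}$, which you propose to obtain ``by solving a Dirichlet problem.'' That justification does not work on the domains the paper allows: for a non-convex polygon the Dirichlet problem with $L^2$ data does not produce $H^2$ solutions, and prescribing Dirichlet boundary data does not let you prescribe (or approximate) a given normal derivative anyway. The correct version of your fallback claim --- normal traces of $H^2(\Omega)\cap H^1_0(\Omega)$ are total in $L^2(\partial\Omega)$ --- is true, but it is precisely the content that needs an argument, and the paper supplies it constructively: decompose $\partial\Omega$ into its smooth faces $\Gamma_1,\dots,\Gamma_n$, and for $\phi\in\DD(\Gamma_j)$ (compactly supported in the open face, hence away from the corners) build an explicit lifting $\psi\in H^2(\Omega)\cap H^1_0(\Omega)$ with $\nablasp\psi\cdot\nnsp=\phi$ on $\Gamma_j$ and $0$ on the other faces; testing with $\varphi=\mu\psi$ and using density of $\DD(J)\otimes\DD(\Gamma_j)$ in $L^2(J\times\Gamma_j)$ then gives $v|_{J\times\Gamma_j}=0$ face by face. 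So your outline is sound, but to close it you should replace the elliptic-regularity appeal by such a lifting construction (or an explicit reference providing it on polygonal/Lipschitz domains).
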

\begin{proof}
  Note that $(v,\ssigma)\in \wilde V$ and $v\in L^2(H^1(\Omega))$ imply that $\partial_t\ssigma\in \bL^2(Q)$, thus, $\ssigma \in H^1(\bL^2(\Omega))$.
  Integration by parts with $\nablasp\varphi(T) = 0$ yields
  \begin{align*}
    D(v,\ssigma)(\varphi) %&= \int_0^T\vdual{\partial_t\ssigma-\nablasp v}{\nablasp\varphi}_\Omega + \vdual{\ssigma}{\nablasp\partial_t\varphi}_\Omega - \vdual{v}{\Deltasp\varphi}_\Omega\,ds + \vdual{\ssigma(0)}{\nablasp\varphi(0)}_\Omega \\
    &= \int_0^T \vdual{-\nablasp v}{\nablasp\varphi}_\Omega - \vdual{v}{\Deltasp\varphi}_\Omega \,ds = \int_0^T -\dual{v}{\nablasp\varphi \cdot\nnsp}_{\partial \Omega}\,ds.
  \end{align*}
  Here, $\nablasp\varphi\cdot\nnsp$ denotes the normal derivative of $\varphi$ on the boundary $J\times \partial\Omega$.
  Suppose that $v\in L^2(H_0^1(\Omega))$. Then, $D(v,\ssigma)(\varphi) = 0$ for all $\varphi\in Y$
  by the last identity.
  Conversely, suppose that $D(v,\ssigma)(\varphi) = 0$ for all $\varphi\in Y$ which means that
  \begin{align*}
    \int_0^T \dual{v}{\nablasp\varphi\cdot\nnsp}_{\partial\Omega} \,ds = 0 \quad\forall \varphi\in Y.
  \end{align*}
  Since $\Omega$ is a polygonal domain there exist open and pairwise disjoint smooth manifolds $\Gamma_1,\dots,\Gamma_n \subseteq \partial \Omega$ with $\bigcup_{j=1}^n \overline \Gamma_j = \partial\Omega$. Fix $j$ and let $\phi\in \DD(\Gamma_j)$. We find $\psi\in H^2(\Omega)\cap H_0^1(\Omega)$ with $\nablasp\psi\cdot\nnsp = \phi$ on $\Gamma_j$ and $\nablasp\psi\cdot\nnsp = 0$ on $\Gamma_k$ with $k\neq j$. For $\mu\in \DD(J)$ we have that, using $\varphi = \mu\psi \in Y$ so that $\nablasp\varphi\cdot\nnsp = \mu\phi$,
  \begin{align*}
    \int_0^T \dual{v}{\mu\phi}_{\Gamma_j} \,ds = 0 \quad\forall \mu\in \DD(J), \, \phi\in \DD(\Gamma_j).
  \end{align*}
  By density of $\DD(J)\otimes \DD(\Gamma_j)$ in $L^2(L^2(\Gamma_j))$ we conclude that $v|_{J\times\Gamma_j} = 0$.
  This clearly holds for all $j=1,\dots,n$ and therefore finishes the proof.
\end{proof}
%--------------------------------------------------------------------------------------------------------
\subsection{Main results}
%--------------------------------------------------------------------------------------------------------
In this section we collect our main results. The first one establishes that the first-order acoustic wave
operator is an isomorphism. 
\begin{theorem}\label{thm:main}
  Operator $\cA\colon V_0 \to L^2(Q)\times \bL^2(Q) \times L^2(\Omega) \times \bL^2(\Omega)$ is an isomorphism.
  In particular,
    \begin{align*}
      \|(v,\ssigma)\|_{L^2(Q)\times \bL^2(Q)}^2  \lesssim \|\cA(v,\ssigma)\|_{L^2(Q)^{1+d}\times L^2(\Omega)^{1+d}}
      \quad\forall (v,\ssigma)\in V_0.
    \end{align*}
\end{theorem}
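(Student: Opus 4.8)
The plan is to prove that $\cA\colon V_0\to L^2(Q)\times\bL^2(Q)\times L^2(\Omega)\times\bL^2(\Omega)$ is an isomorphism by establishing, separately, surjectivity and injectivity together with the a priori bound; continuity of $\cA$ is immediate from the definition of the norms on $V_0$ and $\wilde V$. The core of the argument is a constructive solution operator: given data $\ff=(f,\g,v_0,\ssigma_0)$, I would build a solution $(v,\ssigma)\in V_0$ by passing through the second-order wave equation. The natural ansatz, motivated by the discussion in the introduction, is to write $v=\partial_t u$ and $\ssigma=\nablasp u + \ssigma^\perp$, where $u$ solves a suitable wave problem and $\ssigma^\perp$ accounts for the curl-type (i.e.\ $\nablasp H^1_0(\Omega)^\perp$) component of $\ssigma$ that the gradient ansatz alone cannot reach. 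Concretely, I would split $\ssigma_0=\Pi_{\nablasp H^1_0(\Omega)}\ssigma_0+\Pi_{\nablasp H^1_0(\Omega)^\perp}\ssigma_0$ and $\g$ accordingly in its time-integrated form, set $\ssigma^\perp(t)=\Pi_{\nablasp H^1_0(\Omega)^\perp}(\ssigma_0+\int_0^t\g(s)\,ds)$ (which by construction satisfies $\partial_t\ssigma^\perp=\Pi_{\nablasp H^1_0(\Omega)^\perp}\g$, $\divsp\ssigma^\perp=0$, $\ssigma^\perp(0)$ correct, and lies in $C(\overline J;\bL^2(\Omega))$), and then look for $u$ so that the gradient part does the rest.

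For the gradient part I would introduce $w$ with $\nablasp w = \Pi_{\nablasp H^1_0(\Omega)}\ssigma$, i.e.\ work with a scalar potential in $H^1_0(\Omega)$. Plugging $v=\partial_t u$, $\ssigma=\nablasp u+\ssigma^\perp$ into the two first-order equations, the second equation reduces (after applying $\Pi_{\nablasp H^1_0(\Omega)}$) to $\partial_t\nablasp u - \nablasp v = \Pi_{\nablasp H^1_0(\Omega)}\g$, which is automatically consistent if $v=\partial_t u$ and $u(0)$ is chosen with $\nablasp u(0)=\Pi_{\nablasp H^1_0(\Omega)}\ssigma_0$; and the first equation becomes $\partial_{tt}u-\Deltasp u = f + \divsp\ssigma^\perp = f$ (since $\divsp\ssigma^\perp=0$), a second-order wave equation with initial data $u(0)=u_0$ (the potential of $\Pi_{\nablasp H^1_0(\Omega)}\ssigma_0$) and $\partial_t u(0)=v_0$. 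The regularity classes must be matched carefully: for $f\in L^2(Q)$, $u_0\in H^1_0(\Omega)$, $v_0\in L^2(\Omega)$, Theorem~\ref{thm:wave:weak} gives $u\in L^2(H^1_0(\Omega))\cap H^1(L^2(\Omega))\cap H^2(H^{-1}(\Omega))$, hence $v=\partial_t u\in L^2(L^2(\Omega))$, $\ssigma=\nablasp u+\ssigma^\perp\in L^2(\bL^2(\Omega))$, $\partial_t\ssigma=\nablasp\partial_t u+\partial_t\ssigma^\perp = \nablasp v + \Pi_{\nablasp H^1_0(\Omega)^\perp}\g$, and $\stdiv(v,-\ssigma)=\partial_{tt}u-\Deltasp u=f$; all of these are in the right $L^2$ spaces, so $(v,\ssigma)\in\wilde V$. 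The trace identities $v(0)=v_0$, $\ssigma(0)=\ssigma_0$ follow from Lemma~\ref{lem:ttrace} and the continuity-in-time statements of Theorem~\ref{thm:wave:weak}, so $(v,\ssigma)\in V$; and membership in $V_0=\ker D$ follows because $v=\partial_t u$ with $u\in L^2(H^1_0(\Omega))$ puts $v$ in $L^2(H^1_0(\Omega))$, and then Lemma~\ref{lem:bcdirichlet} applies (after checking $v\in L^2(H^1(\Omega))$, which holds here). The stability bound $\|(v,\ssigma)\|_{L^2\times\bL^2}\lesssim\|\ff\|$ is then read off from~\eqref{eq:wave:weak} and the boundedness of the Helmholtz projections, which gives the displayed inequality and, combined with injectivity, the bounded inverse.

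For injectivity I would take $(v,\ssigma)\in V_0$ with $\cA(v,\ssigma)=0$, i.e.\ $\stdiv(v,-\ssigma)=0$, $\partial_t\ssigma=\nablasp v$ in $\bL^2(Q)$, and $v(0)=0$, $\ssigma(0)=0$, and show $(v,\ssigma)=0$. Here the ultraweak wave equation, Theorem~\ref{lm:wave:uw}, is the right tool: from $\partial_t\ssigma=\nablasp v$ and $\partial_t v=\divsp\ssigma$ one gets, formally, $\partial_{tt}v=\divsp\partial_t\ssigma=\divsp\nablasp v=\Deltasp v$ with zero initial data, and testing against the smooth functions $\varphi\in\xX$ that define the ultraweak formulation — using the definition of $D$ and the condition $(v,\ssigma)\in\ker D$ to legitimately move the spatial derivative onto $\varphi$ and to discard boundary terms — shows $v$ is the ultraweak solution of the homogeneous problem, hence $v=0$; then $\partial_t\ssigma=\nablasp v=0$ with $\ssigma(0)=0$ gives $\ssigma=0$. \textbf{The main obstacle} I anticipate is precisely this injectivity step: one has to verify that the pair $(v,\ssigma)\in V_0$, which only has $L^2$-type regularity and only weak (via $D$) boundary information, genuinely produces an admissible test-function pairing in Theorem~\ref{lm:wave:uw} — that the integration-by-parts manipulations encoded in $D$ are valid against \emph{all} $\varphi\in\xX$ with $\Deltasp\varphi\in L^2(L^2(\Omega))$ and not merely the dense subclass appearing in the definition of $Y$, and that the boundary and initial-time contributions cancel exactly. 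Establishing this rigorously will likely require a density argument relating $Y$ to $\xX$ (the space $\xX$ from Lions--Magenes is built from the weak solution operator, so $Y\subset\xX$ should hold, but one must check the trace/regularity compatibility) together with the identity $D(v,\ssigma)(\varphi)=0$ being exactly the statement needed to identify $v$ with the ultraweak solution.
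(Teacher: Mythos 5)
There is a genuine gap in your surjectivity construction. The ansatz $v=\partial_t u$, $\ssigma=\nablasp u+\ssigma^\perp$ with $\ssigma^\perp(t)=\Pi_{\nablasp H^1_0(\Omega)^\perp}\bigl(\ssigma_0+\int_0^t\g\,ds\bigr)$ yields $\partial_t\ssigma-\nablasp v=\Pi_{\nablasp H^1_0(\Omega)^\perp}\g$, not $\g$: the gradient component $\Pi_{\nablasp H^1_0(\Omega)}\g$ is never produced, since with $v=\partial_t u$ the projection of the second equation onto $\nablasp H^1_0(\Omega)$ reads $0=\Pi_{\nablasp H^1_0(\Omega)}\g$, so your ``automatically consistent'' step only works when that component vanishes. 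If you try to repair this with a potential ansatz $\ssigma=\nablasp w+\ssigma^\perp$, $v=\partial_t w-G$, $\nablasp G=\Pi_{\nablasp H^1_0(\Omega)}\g$, the first equation becomes $\partial_{tt}w-\Deltasp w=f+\partial_t G$, whose right-hand side is not in $L^2(L^2(\Omega))$, and Theorem~\ref{thm:wave:weak} offers no stability for such data; this is precisely the lack of $H^{-1}$-type stability of the standard weak formulation that the paper emphasizes. The paper treats this datum separately (Lemma~\ref{lem:FOS:surj:2}) by solving the \emph{ultraweak} wave equation of Theorem~\ref{lm:wave:uw} with right-hand side $\divsp\g\in L^2(H^{-1}(\Omega))$, taking $v=u$ itself (not $\partial_t u$) and defining $\ssigma\in L^2(\nablasp H^1_0(\Omega))$ through $\divsp\ssigma=\partial_t u$; that is the essential step missing from your plan. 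A further error: Theorem~\ref{thm:wave:weak} gives $u\in L^2(H^1_0(\Omega))\cap H^1(L^2(\Omega))$, hence $\partial_t u\in L^2(L^2(\Omega))$ only, so your claim that $v=\partial_t u\in L^2(H^1_0(\Omega))$ is false and Lemma~\ref{lem:bcdirichlet} cannot be invoked; membership in $\ker D$ must be verified directly by integrating by parts against $\varphi\in Y$, as the paper does in Lemma~\ref{lem:FOS:surj:1}.

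Your injectivity sketch follows the paper's idea and you correctly identify the obstacle, but you leave it unresolved, and the suggested remedy (a density argument based on $Y\subset\xX$) points in the wrong direction: elements of $\xX$ have only $\partial_{tt}\varphi\in L^2(H^{-1}(\Omega))$, so $\xX\not\subset Y$, and what is needed is to approximate a given $\varphi\in\xX$ with $\Deltasp\varphi\in L^2(Q)$ by test functions admissible for $D$. The paper does this by approximating the datum $g=\partial_{tt}\varphi-\Deltasp\varphi$ by $g_n\in C_0^\infty(Q)$, solving the backward wave equation with datum $g_n$, and invoking the extra regularity result of Lemma~\ref{lem:regularity} (a Galerkin argument following Evans, requiring $\partial_t g_n\in L^2(Q)$) to ensure $\varphi_n\in Y$, before passing to the limit. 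Without Lemma~\ref{lem:regularity}, or an equivalent regularity statement, the pairing of $(v,\ssigma)\in\ker D$ with all admissible ultraweak test functions cannot be justified, so this part of your argument remains a plan rather than a proof.
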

\begin{proof}
  The operator $\cA$ is linear and continuous.
  Combining Lemmas~\ref{lem:FOS:surj:1},~\ref{lem:FOS:surj:2},~\ref{lem:FOS:surj:3} and~\ref{lem:FOS:inj}
  we conclude that $\cA$ is bijective. A direct consequence of Banach's \emph{Open Mapping Theorem}
  is that the inverse of $\cA$ is continuous.
\end{proof}
Our second main results establishes that the space of smooth functions with homogeneous Dirichlet boundary
conditions is dense in $V_0$. The proof will be given in section~\ref{sec:density} below.
% Density results justify the use of piecewise polynomial subspaces for approximations. Another application is to prove that that a standard adaptive algorithm for a least-squares method converges to the exact solution, see Remark~\ref{rem:convergence}.
\begin{theorem}\label{thm:density}
  Space $\boldsymbol{C}^\infty := \{(v,\ssigma)\in C^\infty(\overline Q)^{1+d}\mid v|_{J\times \partial\Omega} = 0\}$ is dense in $V_0$.
\end{theorem}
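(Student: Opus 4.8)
The plan has two stages: first show that a subspace of $V_0$ with additional regularity is dense in $V_0$, and then mollify its elements into $\boldsymbol{C}^\infty$. As a preliminary, note $\boldsymbol{C}^\infty\subseteq V_0$: a smooth pair $(v,\ssigma)$ with $v|_{J\times\partial\Omega}=0$ clearly lies in $V$ and has $v\in L^2(H_0^1(\Omega))$, hence $(v,\ssigma)\in\ker D$ by Lemma~\ref{lem:bcdirichlet}.

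\emph{Stage 1.} Let $V_0^{\mathrm{reg}}\subseteq V_0$ consist of the pairs with $v\in H^1(L^2(\Omega))\cap L^2(H_0^1(\Omega))$ and $\ssigma\in H^1(\bL^2(\Omega))$ (so that $\divsp\ssigma=\partial_t v-(\partial_t v-\divsp\ssigma)\in L^2(Q)$ automatically). I would prove $V_0^{\mathrm{reg}}$ dense in $V_0$ using that $\cA$ is an isomorphism (Theorem~\ref{thm:main}) with continuous inverse: it suffices to approximate, in $L^2(Q)\times\bL^2(Q)\times L^2(\Omega)\times\bL^2(\Omega)$, the datum $\cA(v,\ssigma)=(f,\g,v_0,\ssigma_0)$ by data whose $\cA$-preimage lies in $V_0^{\mathrm{reg}}$. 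For such data I would take the spatial Helmholtz potentials to be smooth, write $\ssigma=\nablasp u+\ssigma^\perp$ with $\ssigma^\perp$ divergence-free and $v=\partial_t u-q$ where $\nablasp q$ is the gradient part of $\g$, and observe that $\ssigma^\perp$ solves an ordinary differential equation in time while $u$ solves the second-order wave equation $\partial_{tt}u-\Deltasp u=f+\partial_t q$ with initial data in $H_0^1(\Omega)\times L^2(\Omega)$. Subtracting a correction affine in $t$ to make the initial data vanish, Theorem~\ref{thm:wave:weak} and Lemma~\ref{lem:regularity} give $\partial_t u\in L^\infty(H_0^1(\Omega))$ and $\partial_{tt}u\in L^\infty(L^2(\Omega))$, so that $v\in H^1(L^2(\Omega))\cap L^2(H_0^1(\Omega))$ and $\ssigma\in H^1(\bL^2(\Omega))$; Theorem~\ref{lm:wave:uw} serves to bound the contributions of minimal regularity. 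Membership in $\ker D$ is automatic from Lemma~\ref{lem:bcdirichlet}.

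\emph{Stage 2.} For $(v,\ssigma)\in V_0^{\mathrm{reg}}$ all of $\partial_t v,\partial_t\ssigma,\nablasp v,\divsp\ssigma$ are genuine $L^2(Q)$-functions and $v\in C(\overline J;L^2(\Omega))$, $\ssigma\in C(\overline J;\bL^2(\Omega))$. I would regularize in time and space separately. In time, I would extend $(v,\ssigma)$ to $(-\delta,T)\times\Omega$ by extending the underlying wave solution (and the divergence-free component) backwards across $t=0$ via Theorem~\ref{thm:wave:weak} with the admissible Cauchy data $u(0)\in H_0^1(\Omega)$, $\partial_t u(0)\in L^2(\Omega)$, so that the extension again lies in $V_0^{\mathrm{reg}}$ on the larger interval and keeps the weak homogeneous Dirichlet condition, and then convolve with a mollifier in $t$ supported in $(-\delta,0)$; this yields a pair smooth in $t$ on $[0,T]$ converging to $(v,\ssigma)$ in the full $V$-norm, the trace term converging because $v\in C(\overline J;L^2(\Omega))$. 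In space, I would use the usual localization argument for the polygonal Lipschitz domain: a finite partition of unity, bi-Lipschitz flattening of the boundary pieces, translating the support of the $v$-component strictly into $\Omega$, and mollification; since $\nablasp v,\divsp\ssigma\in L^2(Q)$ this converges in every term of the graph norm and preserves $v|_{J\times\partial\Omega}=0$. The resulting $(v_\varepsilon,\ssigma_\varepsilon)$ lies in $\boldsymbol{C}^\infty$ by Lemma~\ref{lem:bcdirichlet} and converges to $(v,\ssigma)$ in $V$.

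\emph{Expected main obstacle.} The crux is the asymmetry of the time variable: traces of $V_0$-elements at $t=0$ are of $L^2(\Omega)$-type — which is precisely what enables a backward extension converging in the trace part of the $V$-norm — whereas at $t=T$ they lie only in $H^{-1}(\Omega)$, so the mollification near $t=T$ must be one-sided and one must verify carefully that extending and convolving keeps the pair in $\ker D$. A secondary difficulty, internal to Stage 1, is to perform the Helmholtz splitting and the reduction to homogeneous initial conditions without producing a wave right-hand side outside $H^1(L^2(\Omega))$ — the hypothesis of Lemma~\ref{lem:regularity} — since a polygonal $\Omega$ offers no spatial $H^2$-regularity; this is why one works throughout with data ranging over a dense subclass whose Helmholtz potentials are smooth.
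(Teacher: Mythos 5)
Your two-stage architecture reproduces the paper's reduction (densify into a componentwise-regular subspace, then smooth), and your $V_0^{\mathrm{reg}}$ is essentially the space $W=H^1_{J\times\partial\Omega}(Q)\times\bigl(L^2(\bH(\divsp;\Omega))\cap H^1(\bL^2(\Omega))\bigr)$ the paper works with; but your proof of Stage 1 is genuinely different from the paper's. The paper argues by duality: a functional $\ell\in V_0'$ vanishing on $W$ is represented, via the inner product induced by $\cA$ (Theorem~\ref{thm:main}), by some $(w,\cchi)\in V_0$; testing with compactly supported fields shows $\cA_0(u,\ttau)=0$ for $(u,\ttau):=\cA_0(w,\cchi)$, further testing gives $(u(T),\ttau(T))=0$ and $(u,\ttau)\in\ker D_T$, and the time-reversed injectivity argument of Lemma~\ref{lem:FOS:inj} forces $(u,\ttau)=0$ and then $\ell=0$. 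You instead argue constructively: approximate the datum $\cA(v,\ssigma)$ in $L^2$ by data with smooth Helmholtz potentials and rerun the surjectivity constructions (Lemmas~\ref{lem:FOS:surj:1}--\ref{lem:FOS:surj:3}) with the improved regularity of Lemma~\ref{lem:regularity} after an affine-in-$t$ lift of the initial data; continuity of $\cA^{-1}$ then transfers density of the data class to density of the preimages, and Lemma~\ref{lem:bcdirichlet} gives membership in $\ker D$. This is viable with the tools the paper provides (Lemma~\ref{lem:regularity} needs no spatial $H^2$-regularity, so the polygonal domain is not an obstruction, provided the data class has smooth potentials, as you note), and it yields an explicit approximation; the paper's duality route avoids regularity theory for the approximants at the price of introducing $D_T$ and the reversed-time uniqueness argument.

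Stage 2, however, contains a claim that is false as stated and is also largely unnecessary. The backward-in-time extension via Theorem~\ref{thm:wave:weak} does \emph{not} ``again lie in $V_0^{\mathrm{reg}}$ on the larger interval'': for Cauchy data merely in $H^1_0(\Omega)\times L^2(\Omega)$ the weak solution has only $\partial_{tt}u\in L^2(H^{-1}(\Omega))$ and $\partial_t u(t)\in L^2(\Omega)$, so the extended velocity $\partial_t u$ is neither in $H^1(L^2(\Omega))$ nor $H^1_0(\Omega)$-valued on $(-\delta,0)$. Consequently, after your one-sided time mollification the premises of your spatial step ($\nablasp v\in L^2(Q)$, lateral trace zero, and $\ker D$ via Lemma~\ref{lem:bcdirichlet}) fail near $t=0$, and you are thrown back onto mollifying in the coupled graph norm --- exactly the difficulty that makes such density results delicate on non-rectangular domains. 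The repair is to drop the extension altogether: an element of $V_0^{\mathrm{reg}}$ already has $v\in H^1(Q)$ with $v|_{J\times\partial\Omega}=0$ and $\ssigma\in L^2(\bH(\divsp;\Omega))\cap H^1(\bL^2(\Omega))$, and in particular $v,\ssigma\in C(\overline J;L^2)$, so there is no asymmetry problem at $t=T$ for these approximants. Componentwise density then finishes: smooth functions vanishing on $J\times\partial\Omega$ are dense in $H^1_{J\times\partial\Omega}(Q)$, and Lemma~\ref{lem:density} combined with the density of $C^\infty(\overline\Omega)^d$ in $\bH(\divsp;\Omega)$ handles the $\ssigma$-component; each term of the graph norm and the trace term at $t=0$ is controlled by these componentwise norms, and Lemma~\ref{lem:bcdirichlet} keeps the approximants in $V_0$. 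This componentwise step is precisely how the paper concludes.
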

%=========================================
\subsection{Analysis of the first-order wave operator}
%=========================================
In the following three lemmas we show that the operator $\cA$ is surjective. 
\begin{lemma}\label{lem:FOS:surj:1}
  Let $f\in L^2(Q)$, $u_0\in H^1_0(\Omega)$, $u_1\in L^2(\Omega)$ be given.
  There exists $(v,\ssigma)\in V_0$ such that
  \begin{align*}
    \cA(v,\ssigma) = ( f,0,u_1,\nablasp u_0)
  \end{align*}
  and 
  \begin{align*}
    \|(v,\ssigma)\|_{V} \lesssim \|f\|_{L^2(Q)} + \|\nablasp u_0\|_{L^2(\Omega)} + \|u_1\|_{L^2(\Omega)}.
  \end{align*}
\end{lemma}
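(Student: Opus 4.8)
The plan is to build $(v,\ssigma)$ from the weak solution of the second-order wave equation and to obtain $(v,\ssigma)\in V_0=\ker D$ by a density argument in the data. First I would let $u$ be the unique weak solution of~\eqref{eq:wave} from Theorem~\ref{thm:wave:weak} with right-hand side $f$ and initial data $g_0=u_0$, $g_1=u_1$, and set $v:=\partial_t u$, $\ssigma:=\nablasp u$. From $u\in L^2(H_0^1(\Omega))\cap H^1(L^2(\Omega))$ one reads off $v\in L^2(Q)$ and $\ssigma\in\bL^2(Q)$; testing the variational identity of~\eqref{eq:wave} against $\cD(Q)$ gives $\stdiv(v,-\ssigma)=\partial_{tt}u-\Deltasp u=f$ in $\cD'(Q)$, and Lemma~\ref{lem:schwarz} applied to $\nablasp\colon L^2(\Omega)\to\bH^{-1}(\Omega)$ gives $\partial_t\ssigma-\nablasp v=\nablasp\partial_t u-\nablasp\partial_t u=0$; hence $(v,\ssigma)\in\wilde V$. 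Since $u\in C(\overline J;H_0^1(\Omega))\cap C^1(\overline J;L^2(\Omega))$, composition with the bounded operator $\nablasp\colon H_0^1(\Omega)\to\bL^2(\Omega)$ shows $\ssigma\in C(\overline J;\bL^2(\Omega))$, and $v\in C(\overline J;L^2(\Omega))$, so that $v(0)=u_1$ and $\ssigma(0)=\nablasp u_0$ lie in $L^2(\Omega)$ resp.\ $\bL^2(\Omega)$ and coincide with $\gamma_0(v,\ssigma)$ (by uniqueness of traces and $L^2\hookrightarrow H^{-1}$). Thus $(v,\ssigma)\in V$ with $\cA(v,\ssigma)=(f,0,u_1,\nablasp u_0)$, and the stability bound follows from $\|(v,\ssigma)\|_V^2=\|v\|_{L^2(Q)}^2+\|\ssigma\|_{\bL^2(Q)}^2+\|f\|_{L^2(Q)}^2+\|u_1\|_{L^2(\Omega)}^2+\|\nablasp u_0\|_{\bL^2(\Omega)}^2$, bounding the first two terms by $\|\partial_t u\|_{L^\infty(L^2(\Omega))}$ and $\|u\|_{L^\infty(H_0^1(\Omega))}$ via~\eqref{eq:wave:weak} and replacing $\|u_0\|_{H^1(\Omega)}$ by $\|\nablasp u_0\|_{\bL^2(\Omega)}$ with the Poincar\'e inequality.

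The hard part is $(v,\ssigma)\in\ker D$. A direct verification of $D(v,\ssigma)(\varphi)=0$ for $\varphi\in Y$ seems out of reach for merely $L^2$ data: after using $\partial_t\ssigma-\nablasp v=0$ it reduces to $(\nablasp u,\nablasp\partial_t\varphi)_Q-(\partial_t u,\Deltasp\varphi)_Q+(\nablasp u_0,\nablasp\varphi(0))_\Omega=0$, and matching this with the variational identity for $u$ by integration by parts would require $\partial_t u\in L^2(H_0^1(\Omega))$, which Theorem~\ref{thm:wave:weak} does not provide. Instead I would argue by density. Choose $f_n\in\cD(J)\otimes L^2(\Omega)$ with $f_n\to f$ in $L^2(Q)$, and $u_{0,n},u_{1,n}\in\cD(\Omega)$ with $u_{0,n}\to u_0$ in $H_0^1(\Omega)$ and $u_{1,n}\to u_1$ in $L^2(\Omega)$; let $u_n$ denote the weak solution with data $(f_n,u_{0,n},u_{1,n})$ and set $(v_n,\ssigma_n):=(\partial_t u_n,\nablasp u_n)$. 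As above, $(v_n,\ssigma_n)\in\wilde V$ with $\stdiv(v_n,-\ssigma_n)=f_n$ and $\partial_t\ssigma_n-\nablasp v_n=0$. To upgrade the regularity of $v_n$, subtract the lifting $w_n(t):=u_{0,n}+t\,u_{1,n}$: then $\wilde u_n:=u_n-w_n$ is the weak solution with vanishing initial conditions and right-hand side $\wilde f_n:=f_n+\Deltasp u_{0,n}+t\,\Deltasp u_{1,n}\in H^1(L^2(\Omega))$, so Lemma~\ref{lem:regularity} yields $\partial_t\wilde u_n\in L^\infty(H_0^1(\Omega))$; since $u_{1,n}\in\cD(\Omega)\subset H_0^1(\Omega)$ this gives $v_n=\partial_t\wilde u_n+u_{1,n}\in L^2(H_0^1(\Omega))\subseteq L^2(H^1(\Omega))$. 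Now Lemma~\ref{lem:bcdirichlet}, applied to $(v_n,\ssigma_n)\in\wilde V$ with $v_n\in L^2(H^1(\Omega))$, shows $(v_n,\ssigma_n)\in\ker D=V_0$.

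It remains to pass to the limit. By linearity of the weak solution map, $u_n-u_m$ is the weak solution for the data $(f_n-f_m,u_{0,n}-u_{0,m},u_{1,n}-u_{1,m})$, hence $(v_n-v_m,\ssigma_n-\ssigma_m)$ is the pair constructed as in the first step from that data, and the stability bound already proved gives $\|(v_n,\ssigma_n)-(v_m,\ssigma_m)\|_V\lesssim\|f_n-f_m\|_{L^2(Q)}+\|\nablasp(u_{0,n}-u_{0,m})\|_{\bL^2(\Omega)}+\|u_{1,n}-u_{1,m}\|_{L^2(\Omega)}\to0$. Thus $(v_n,\ssigma_n)$ is a Cauchy sequence in $V$; its limit $(v,\ssigma)$ lies in $V_0$ because $V_0$ is closed, equals $(\partial_t u,\nablasp u)$ by continuous dependence of the weak solution on the data, satisfies $\cA(v,\ssigma)=(f,0,u_1,\nablasp u_0)$ by continuity of $\cA$, and inherits the asserted estimate. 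The only genuinely non-routine ingredient is therefore the weak-boundary-condition bookkeeping that places the smooth-data approximants $(v_n,\ssigma_n)$ in $\ker D$, for which the lifting trick together with Lemma~\ref{lem:regularity} and Lemma~\ref{lem:bcdirichlet} is the key.
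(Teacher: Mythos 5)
Your construction and stability bound follow the paper's first step verbatim (take the weak solution $u$ from Theorem~\ref{thm:wave:weak}, set $(v,\ssigma)=(\partial_t u,\nablasp u)$), but for the key step $(v,\ssigma)\in\ker D$ you take a genuinely different route. The paper verifies $D(v,\ssigma)(\varphi)=0$ \emph{directly} at the natural regularity level: after using $\partial_t\ssigma-\nablasp v=0$ it establishes the integration-by-parts identity~\eqref{proof:surj1:b} by moving the \emph{time} derivative from $u$ onto $\varphi$, exploiting $\varphi\in Y$ (so $\partial_t\varphi\in L^2(H^1_0(\Omega))$, $\Deltasp\partial_t\varphi=\partial_t\Deltasp\varphi\in L^2(H^{-1}(\Omega))$, $\Deltasp\varphi(T)=0$) together with $u(0)\in H^1_0(\Omega)$; this needs only $u\in C(\overline J;H^1_0(\Omega))\cap C^1(\overline J;L^2(\Omega))$, so your assertion that a direct verification is ``out of reach'' without $\partial_t u\in L^2(H^1_0(\Omega))$ is not accurate — the obstruction you describe only affects the particular spatial integration by parts you had in mind. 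Your detour is nevertheless correct: the lifting $w_n(t)=u_{0,n}+t\,u_{1,n}$ does reduce to homogeneous initial data with $\wilde f_n\in H^1(L^2(\Omega))$, Lemma~\ref{lem:regularity} then gives $v_n\in L^2(H^1_0(\Omega))$, the ``if'' direction of Lemma~\ref{lem:bcdirichlet} places $(v_n,\ssigma_n)$ in $\ker D$, and the linearity of the data-to-$(v,\ssigma)$ map together with your stability bound and the closedness of $V_0$ lets you pass to the limit and identify it with $(\partial_t u,\nablasp u)$. What the two approaches buy: the paper's argument is shorter, self-contained within Theorem~\ref{thm:wave:weak}, and reserves Lemma~\ref{lem:regularity} for the injectivity proof (Lemma~\ref{lem:FOS:inj}), whereas yours trades the careful duality bookkeeping of~\eqref{proof:surj1:b} for an extra approximation/limit layer and a second use of the regularity lemma plus Lemma~\ref{lem:bcdirichlet}; both yield exactly the stated surjectivity onto data of the form $(f,0,u_1,\nablasp u_0)$ with the same bound.
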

\begin{proof}
  First, let $u\in L^2(H_0^1(\Omega))\cap H^1(L^2(\Omega))\cap H^2(H^{-1}(\Omega))$ denote the unique solution
  of the wave equation~\eqref{eq:wave} and define
  $(v,\ssigma) = (\partial_t u,\nablasp u)\in L^2(L^2(\Omega))\times L^2(\bL^2(\Omega))$.
  Furthermore, $\stdiv(v,-\ssigma)\in L^2(Q)$
  %Applying Lemma~\ref{lem:schwarz} (2) with $X=L^2(\Omega)$,  $Y=H^{-1}(\Omega)$ and $\partial_j: X\to Y$ for $j=1,\dots,d$ shows that
  and $\partial_t\ssigma = \nablasp v\in L^2(\bH^{-1}(\Omega))$.
  %, i.e., $\partial_t\ssigma-\nablasp v=0\in L^2(\bL^2(\Omega))$.
  Next, note that
  $v(0)=\partial_t u(0) = u_1\in L^2(\Omega)$ and, due to Lemma~\ref{lem:schwarz} (i),
  $\ssigma(0) = (\nablasp u)(0) = \nablasp (u(0)) = \nablasp u_0\in L^2(\Omega)$.
  We conclude that $(v,\ssigma)\in V$.
  It remains to show that $(v,\ssigma)\in \ker D$.
  Let $\varphi \in Y$ be given. 
  Using $\ssigma = \nablasp u$, $v = \partial_t u$, $\partial_t\ssigma-\nablasp v = 0$ we obtain that
  \begin{align}\label{proof:surj1:a}
  \begin{split}
    D(v,\ssigma)(\varphi) %&= \int_0^T \vdual{\partial_t\ssigma-\nablasp v}{\nablasp\varphi}_\Omega + \vdual{\ssigma}{\nablasp\partial_t\varphi}_\Omega
    %- \vdual{v}{\Deltasp \varphi}_\Omega\,ds + \vdual{\ssigma(0)}{\nablasp\varphi(0)}_\Omega \\
     &= \int_0^T \vdual{\nablasp u}{\partial_t\nablasp\varphi}_\Omega
     - \vdual{\partial_t u}{\Deltasp\varphi}_\Omega\,ds + \vdual{\ssigma(0)}{\nablasp\varphi(0)}_\Omega.
   \end{split}
  \end{align}
  Note that $\partial_t\varphi \in L^2(H_0^1(\Omega))$ and $\Deltasp\varphi\in L^2(Q)$. Therefore, $\Deltasp\partial_t\varphi = \partial_t\Deltasp\varphi \in L^2(H^{-1}(\Omega))$ and $\Deltasp\varphi(t)\in H^{-1}(\Omega)$ for all $t\in \overline J$.
  Furthermore, $u(0) \in H_0^1(\Omega)$ and $u(T)\in H_0^1(\Omega)$. One verifies the integration by parts formula
  \begin{align}\label{proof:surj1:b}
  \begin{split}
    \int_0^T \vdual{\partial_t u}{\Deltasp\varphi}_\Omega\,ds 
    %&= \int_0^T \dual{\partial_t \Deltasp\varphi}u_{H^{-1}(\Omega)\times H_0^1(\Omega)}\,ds
    % \\
    % &\qquad + \dual{\Deltasp\varphi(0)}{u(0)}_{H^{-1}(\Omega)\times H_0^1(\Omega)} - \dual{\Deltasp\varphi(T)}{u(T)}_{H^{-1}(\Omega)\times H_0^1(\Omega)}
    % \\
     &= \int_0^T \vdual{\nablasp u}{\partial_t \nablasp\varphi}_\Omega \,ds + \vdual{\nablasp u(0)}{\nablasp\varphi(0)}_\Omega.
   \end{split}
  \end{align}
  Here, we have used that $\varphi(T) = 0$, thus, $\Deltasp\varphi(T) = 0$. Combining~\eqref{proof:surj1:a} and~\eqref{proof:surj1:b} and using that $\nablasp u(0) = \ssigma(0)$ we conclude that $D(v,\ssigma)(\varphi) = 0$ for all $\varphi\in Y$ which proves that $(v,\ssigma)\in V_0$.

  The stability estimate is a direct consequence of the stability of the wave problem~\eqref{eq:wave}
  with $v=\partial_t u$, $\ssigma = \nablasp u$.
\end{proof}

\begin{lemma}\label{lem:FOS:surj:2}
  Let $\g \in L^2(\nablasp(H_0^1(\Omega)))$ be given. There exists $(v,\ssigma)\in V_0$ such that
  \begin{align*}
    \cA(v,\ssigma) = (0,\g,0,0) \quad\text{and}\quad \|(v,\ssigma)\|_{V} \lesssim \|\g\|_{\bL^2(Q)}.
  \end{align*}
\end{lemma}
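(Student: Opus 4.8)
The plan is to realize the datum $(0,\g,0,0)$ through a scalar potential for a suitably chosen second-order wave equation. Since $\g(t,\cdot)\in\nablasp H^1_0(\Omega)$ for a.e.\ $t$ and $\nablasp\colon H^1_0(\Omega)\to\bL^2(\Omega)$ is an isometry onto its (closed) range, we may write $\g=\nablasp g_u$ with $g_u\in L^2(H^1_0(\Omega))$ and $\|g_u\|_{L^2(H^1_0(\Omega))}=\|\g\|_{\bL^2(Q)}$. Setting $w(t):=\int_0^t g_u(s)\,ds$ one gets $w\in H^1(H^1_0(\Omega))$, $w(0)=0$, $w'=g_u$, so (Lemma~\ref{lem:schwarz}) $G:=\nablasp w=\int_0^t\g(s)\,ds\in H^1(\bL^2(\Omega))$ with $G(0)=0$ and $\partial_t G=\g$. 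The crux is to produce $\phi\in L^2(H^1_0(\Omega))\cap H^1(L^2(\Omega))$ with $\partial_{tt}\phi\in L^2(H^{-1}(\Omega))$ solving
\begin{align*}
  \partial_{tt}\phi-\Deltasp\phi = \Deltasp w\ \text{ in }Q,\qquad \phi|_{J\times\partial\Omega}=0,\qquad \phi(0)=\partial_t\phi(0)=0 .
\end{align*}
Granting this, I would set $v:=\partial_t\phi\in L^2(Q)$ and $\ssigma:=\nablasp\phi+G\in\bL^2(Q)$; then $\partial_t\ssigma-\nablasp v=\nablasp\partial_t\phi+\partial_t G-\nablasp\partial_t\phi=\g$, while $\partial_t v-\divsp\ssigma=\partial_{tt}\phi-\Deltasp\phi-\divsp G=\partial_{tt}\phi-\Deltasp\phi-\Deltasp w=0$, and $v(0)=\partial_t\phi(0)=0$, $\ssigma(0)=\nablasp\phi(0)+G(0)=0$, so $\cA(v,\ssigma)=(0,\g,0,0)$ and $(v,\ssigma)\in V$.

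The main obstacle is precisely the solvability of this auxiliary wave equation: its source $\Deltasp w$ only lies in $L^2(H^{-1}(\Omega))$, so Theorem~\ref{thm:wave:weak} does not apply directly, and it is the divergence structure of the source ($\Deltasp w=\divsp G$) that rescues us. Testing (formally) the equation with $\partial_t\phi$ and rewriting $\dual{\divsp G}{\partial_t\phi}{H^{-1}(\Omega)\times H^1_0(\Omega)}=-\vdual{G}{\nablasp\partial_t\phi}_\Omega=-\tfrac{d}{dt}\vdual{G}{\nablasp\phi}_\Omega+\vdual{\partial_t G}{\nablasp\phi}_\Omega$, then integrating in time and using $\phi(0)=\partial_t\phi(0)=0$, $G(0)=0$ together with a Gronwall estimate, yields
\begin{align*}
  \|\nablasp\phi\|_{L^\infty(\bL^2(\Omega))}+\|\partial_t\phi\|_{L^\infty(L^2(\Omega))}\lesssim\|\partial_t G\|_{L^2(\bL^2(\Omega))}=\|\g\|_{\bL^2(Q)} .
\end{align*}
To make existence rigorous I would approximate $w$ in $H^1(H^1_0(\Omega))$ by $w_m\in C^\infty(\overline J)\otimes\bigl(H^2(\Omega)\cap H^1_0(\Omega)\bigr)$ (as in the proof of Lemma~\ref{lem:density}, constructing $w_m$ as a time antiderivative of an $L^2(H^1_0(\Omega))$-approximation of $g_u$); then $\Deltasp w_m\in L^2(L^2(\Omega))$, so Theorem~\ref{thm:wave:weak} provides solutions $\phi_m$ with vanishing initial data, the energy estimate applied to the differences $\phi_m-\phi_{m'}$ shows $(\phi_m)$ is Cauchy in $C(\overline J;H^1_0(\Omega))\cap C^1(\overline J;L^2(\Omega))$, and the limit $\phi$ has all stated properties (in particular $\partial_{tt}\phi=\Deltasp\phi+\divsp G\in L^2(H^{-1}(\Omega))$).

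It remains to check $(v,\ssigma)\in V_0=\ker D$ and the norm bound. Inserting $v=\partial_t\phi$, $\ssigma=\nablasp\phi+G$, $\partial_t\ssigma-\nablasp v=\g$ and $\ssigma(0)=0$ into the definition of $D$ and integrating the term $\vdual{G}{\nablasp\partial_t\varphi}_Q$ by parts in time (using $G(0)=0$, $\partial_t G=\g$ and $\varphi(T)=0$, hence $\nablasp\varphi(T)=0$) cancels the $\vdual{\g}{\nablasp\varphi}_Q$ contribution, leaving $D(v,\ssigma)(\varphi)=\vdual{\nablasp\phi}{\nablasp\partial_t\varphi}_Q-\vdual{\partial_t\phi}{\Deltasp\varphi}_Q$; this vanishes by exactly the space/time integration by parts carried out in the proof of Lemma~\ref{lem:FOS:surj:1} (with $u$ replaced by $\phi$, using $\phi(t)\in H^1_0(\Omega)$, $\phi(0)=0$, $\Deltasp\varphi(T)=0$ and $\Deltasp\partial_t\varphi=\partial_t\Deltasp\varphi\in L^2(H^{-1}(\Omega))$). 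Finally $\|(v,\ssigma)\|_V\lesssim\|\g\|_{\bL^2(Q)}$ follows from the energy estimate for $\phi$ together with $\|G\|_{H^1(\bL^2(\Omega))}\lesssim\|\g\|_{\bL^2(Q)}$ (again since $G(0)=0$). Apart from the auxiliary wave problem, every step is routine bookkeeping parallel to Lemma~\ref{lem:FOS:surj:1}.
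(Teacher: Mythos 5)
Your construction is correct, but it follows a genuinely different route from the paper. The paper sets $f=\divsp\g\in L^2(H^{-1}(\Omega))$ and solves the \emph{ultraweak} wave equation of Theorem~\ref{lm:wave:uw} (Lions--Magenes), takes $v=u$, and recovers $\ssigma\in L^2(\nablasp H^1_0(\Omega))$ from $\divsp\ssigma=\partial_t u$; the stability bound then comes for free from the ultraweak theory together with $\|\ssigma\|_{\bL^2(Q)}\eqsim\|\divsp\ssigma\|_{L^2(H^{-1}(\Omega))}$. You instead integrate $\g$ in time to obtain the potential $G=\nablasp w\in H^1(\bL^2(\Omega))$ with $G(0)=0$ and solve an auxiliary second-order wave equation in the energy class with the divergence-structure source $\Deltasp w=\divsp G$, exploiting $\partial_t G\in\bL^2(Q)$ and $G(0)=0$ to obtain an energy estimate that generic $L^2(H^{-1})$ data would not permit (consistent with the paper's remark that the weak form is not stable for such data). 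This buys you independence from the ultraweak machinery, at the price of an approximation argument for the auxiliary problem; conversely, the paper's route avoids any energy/Gronwall computation but leans on the Lions--Magenes results. The one place where you should be more careful is the rigor of the refined energy estimate: for the weak solutions $\phi_m$ of Theorem~\ref{thm:wave:weak} one only knows $\partial_t\phi_m\in L^2(L^2(\Omega))$ a priori, so testing with $\partial_t\phi_m$ (and the pairing with $\Deltasp\phi_m$, respectively $\divsp(G_m-G_{m'})$) is not immediately licit. This is easily repaired with tools already in the paper: your approximants satisfy $\Deltasp w_m\in C^\infty(\overline J)\otimes L^2(\Omega)\subset H^1(L^2(\Omega))$, so Lemma~\ref{lem:regularity} gives $\partial_t\phi_m\in L^\infty(H^1_0(\Omega))$ and $\partial_{tt}\phi_m\in L^\infty(L^2(\Omega))$, which justifies the energy identity for each $\phi_m$ and, by linearity, for the differences $\phi_m-\phi_{m'}$ (alternatively one argues at the Galerkin level). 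With that word added, the remaining steps --- the verification of $\cA(v,\ssigma)=(0,\g,0,0)$, the time integration by parts cancelling the $\vdual{\g}{\nablasp\varphi}_Q$ term in $D(v,\ssigma)(\varphi)$, the reuse of the identity from Lemma~\ref{lem:FOS:surj:1}, and the norm bound --- are all sound.
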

\begin{proof}
  Set $f=\divsp\g\in L^2(H^{-1}(\Omega))$, $g_0=0$, $g_1 = 0$ and let $u\in L^2(Q)\cap H^1(H^{-1}(\Omega))$ be the solution of the ultraweak wave equation from Theorem~\ref{lm:wave:uw}. Let $\ssigma\in L^2(\nablasp H_0^1(\Omega))$ denote the unique element with $\divsp\ssigma = \partial_t u$.
  We have $(v,\ssigma):=(u,\ssigma)\in L^2(Q)^{1+d}$,
  and also $\partial_t v-\divsp\ssigma=0\in L^2(L^2(\Omega))$. Next, we show that
  $\partial_t\ssigma-\nablasp v=\g \in L^2(\bL^2(\Omega))$, which by density
  of $\DD(J)\otimes \DD(\Omega)^d\subset L^2(\bL^2(\Omega))$ and linearity, boils down to showing that for all
  $\varphi\in \DD(J)$ and $\xxi\in\DD(\Omega)^d$ it holds
  \begin{align*}
    \int_J \vdual{\g}{\xxi}_\Omega \varphi\,ds &=
    \int_J -\vdual{\ssigma}{\xxi}_\Omega\partial_t\varphi + \vdual{v}{\divsp\xxi}_\Omega \varphi\,ds.
  \end{align*}
  Writing $\xxi= \Pi_{\nablasp H^1_0(\Omega)}\xxi + \Pi_{\nablasp H^1_0(\Omega)^\perp} \xxi$, taking into account
  that $\g,\ssigma \in L^2(\nablasp H^1_0(\Omega))$ and $\divsp\Pi_{\nablasp H^1_0(\Omega)^\perp} \xxi=0$,
  it remains to show that
  \begin{align*}
    \int_J \vdual{\g}{\Pi_{\nablasp H^1_0(\Omega)}\xxi}_\Omega \varphi\,ds &=
    \int_J -\vdual{\ssigma}{\Pi_{\nablasp H^1_0(\Omega)}\xxi}_\Omega\partial_t\varphi
    + \vdual{v}{\divsp\Pi_{\nablasp H^1_0(\Omega)}\xxi}_\Omega \varphi\,ds.
  \end{align*}
  Take $\chi\in H^1_0(\Omega)$ such that $\nablasp \chi = \Pi_{\nablasp H^1_0(\Omega)}\xxi$
  and note that $\phi:= \varphi \chi$ fulfills
    \begin{align*}
      \partial_{tt}\phi = \partial_{tt}\varphi\chi \in L^2(L^2(\Omega))
      \text{ and } \Deltasp\phi = \varphi \divsp\Pi_{\nablasp H^1_0(\Omega)}\xxi
      = \varphi \divsp\xxi \in L^2(L^2(\Omega)).
    \end{align*}
    As $v$ solves the ultraweak wave equation, $\phi\in\xX$ and $\phi(0)=0$,
  \begin{align*}
    &\int_J -\vdual{\ssigma}{\Pi_{\nablasp H^1_0(\Omega)}\xxi}_\Omega\partial_t\varphi
    + \vdual{v}{\divsp\Pi_{\nablasp H^1_0(\Omega)}\xxi}_\Omega \varphi\,ds
    \\ &\qquad= 
    \int_J -\vdual{\ssigma}{\nablasp\chi}_\Omega\partial_t\varphi +
    \vdual{v}{\divsp\xxi}_\Omega \varphi\,ds\\
    &\qquad=
    -\int_0^T \vdual{v}{\partial_{tt}\phi-\Deltasp \phi}_\Omega\,ds =
    \int_0^T \vdual{\g}{\nablasp\phi}_\Omega\,ds
    = \int_J \vdual{\g}{\Pi_{\nablasp H^1_0(\Omega)}\xxi}_\Omega \varphi\,ds.
  \end{align*}
  Theorem~\ref{lm:wave:uw} gives $v(0)=0\in L^2(\Omega)$, and
  Lemma~\ref{lem:trace} additionally shows that $\divsp \ssigma(0) = \partial_t u(0)=0$. Furthermore, as
  $\ssigma(0)$ is the gradient of an $H^1_0(\Omega)$-function, we conclude $\ssigma(0)=0$.
  It remains to prove that $(v,\ssigma)\in V_0$.
  Let $\varphi\in Y$ be given. Using that $\partial_t \varphi \in L^2(H_0^1(\Omega))$,
  $\divsp\ssigma(0)=0$, $\divsp\ssigma = \partial_t v$, $\partial_{tt}\varphi \in L^2(Q)$, $v(0) = 0$, and $\partial_t\varphi(T) = 0$ we obtain that
  \begin{align}\label{proof:surj2:a}
  \begin{split}
    \int_0^T \vdual{\ssigma}{\partial_t\nablasp\varphi}_\Omega\,ds &= -\int_0^T \dual{\divsp\ssigma}{\partial_t \varphi}_{H^{-1}(\Omega)\times H_0^1(\Omega)} 
    \\ &= -\int_0^T \dual{\partial_t v}{\partial_t \varphi}_{H^{-1}(\Omega)\times H_0^1(\Omega)} \\
    &= \int_0^T \vdual{v}{\partial_{tt}\varphi}_\Omega \,ds + \vdual{\partial_t\varphi(0)}{v(0)}_\Omega-\vdual{\partial_t\varphi(T)}{v(T)}_\Omega 
    \\
    &= \int_0^T \vdual{v}{\partial_{tt}\varphi}_\Omega \,ds.
  \end{split}
  \end{align}
  Recall that $v$ solves the ultraweak wave equation so that in particular
  \begin{align*}
    \int_0^T \vdual{v}{\partial_{tt}\varphi-\Deltasp\varphi}_\Omega \,ds = \int_0^T-\vdual{\g}{\nablasp\varphi}_\Omega\,ds
  \end{align*}
  Combining this with~\eqref{proof:surj2:a}, $\partial_t\ssigma-\nablasp v=\g$ and $\ssigma(0)=0$ we further conclude that
  \begin{align*}
      D(v,\ssigma)(\varphi) %&= \int_0^T \vdual{\partial_t\ssigma-\nablasp v}{\nablasp\varphi}_\Omega + \vdual{\ssigma}{\nablasp\partial_t\varphi}_\Omega
    %- \vdual{v}{\Deltasp \varphi}_\Omega\,ds + \vdual{\ssigma(0)}{\nablasp\varphi(0)}_\Omega \\
    &= \int_0^T \vdual{\g}{\nablasp\varphi}_\Omega + \vdual{\ssigma}{\nablasp\partial_t\varphi}_\Omega
    - \vdual{v}{\Deltasp \varphi}_\Omega\,ds
    \\
    &= \int_0^T \vdual{\g}{\nablasp\varphi}_\Omega + \vdual{v}{\partial_{tt}\varphi-\Deltasp \varphi}_\Omega\,ds = 0.
  \end{align*}
  This proves that $(v,\ssigma)\in V_0$.
  By the stability properties of the ultra-weak wave equation and the fact that $\|\ssigma\|_{\bL^2(Q)} \eqsim \|\divsp\ssigma\|_{L^2(H^{-1}(\Omega))}$ we get that
  \begin{align*}
    \|v\|_{L^2(Q)} + \|\ssigma\|_{\bL^2(Q)} &\eqsim \|u\|_{L^2(Q)}+\|\divsp\ssigma\|_{L^2(H^{-1}(\Omega))} \!=\! \|u\|_{L^2(Q)}\!+\!\|\partial_t u\|_{L^2(H^{-1}(\Omega))} 
    \\
    &\lesssim \|\divsp\g\|_{L^2(H^{-1}(\Omega))} \eqsim \|\g\|_{\bL^2(Q)}.
  \end{align*}
  From there we get that $\|(v,\ssigma)\|_{V} \lesssim \|\g\|_{\bL^2(Q)}$ which finishes the proof.
\end{proof}
So far we have only shown that if the vector-valued datum on the right-hand side can be represented
as gradients of $H^1_0(\Omega)$-functions, then they are in the range of operator $\cA$. In the next result we complement on that.
%--------------------------------------------------------------------------------------------------------
\begin{lemma}\label{lem:FOS:surj:3}
  Let $\g^\perp \in L^2(\nablasp H^1_0(\Omega)^\perp)$ and
  $\ssigma_0^\perp\in \nablasp H^1_0(\Omega)^\perp$ be given.
  There exists $(v,\ssigma)\in V_0$ such that
  \begin{align*}
    \cA(v,\ssigma) = (0,\g^\perp,0,\ssigma_0^\perp) \quad\text{and}\quad
    \|(v,\ssigma)\|_{V} \lesssim \|\g^\perp\|_{L^2(Q)} + \|\ssigma_0^\perp\|_{L^2(\Omega)}.
  \end{align*}
\end{lemma}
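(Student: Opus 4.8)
The plan is to write down $(v,\ssigma)$ explicitly, using that both data $\g^\perp$ and $\ssigma_0^\perp$ lie in the ``curl part'' $\nablasp H^1_0(\Omega)^\perp$ of the Helmholtz decomposition, which makes the first equation and the boundary operator $D$ trivially satisfied. Concretely, I would set $v:=0$ and
\[
  \ssigma(t):=\ssigma_0^\perp+\int_0^t\g^\perp(s)\,ds,\qquad t\in\overline J.
\]
By Lemma~\ref{lem:ttrace}~(i) the map $t\mapsto\int_0^t\g^\perp(s)\,ds$ belongs to $H^1(\bL^2(\Omega))\hookrightarrow C(\overline J;\bL^2(\Omega))$, hence $\ssigma\in H^1(\bL^2(\Omega))\subset\bL^2(Q)$ with $\partial_t\ssigma=\g^\perp$ and $\ssigma(0)=\ssigma_0^\perp$. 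Since $\nablasp H^1_0(\Omega)^\perp$ is a closed subspace of $\bL^2(\Omega)$ containing $\ssigma_0^\perp$ and $\g^\perp(s)$ for a.e.\ $s$, Lemma~\ref{lem:ttrace}~(iii) gives $\ssigma(t)\in\nablasp H^1_0(\Omega)^\perp$ for every $t$; in particular $\divsp\ssigma(t)=0$.

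Next I would check the membership claims. With $v=0$ one has $\stdiv(v,-\ssigma)=-\divsp\ssigma=0\in L^2(Q)$ and $\partial_t\ssigma-\nablasp v=\g^\perp\in\bL^2(Q)$, so $(v,\ssigma)\in\wilde V$; moreover $\gamma_0(v,\ssigma)=(0,\ssigma_0^\perp)\in L^2(\Omega)\times\bL^2(\Omega)$, so $(v,\ssigma)\in V$, and by construction $\cA(v,\ssigma)=(0,\g^\perp,0,\ssigma_0^\perp)$. The only substantive point is $(v,\ssigma)\in V_0=\ker D$: for $\varphi\in Y$, inserting $v=0$, $\partial_t\ssigma=\g^\perp$, $\ssigma(0)=\ssigma_0^\perp$ into the definition of $D$ gives
\[
  D(v,\ssigma)(\varphi)=\vdual{\g^\perp}{\nablasp\varphi}_Q+\vdual{\ssigma}{\nablasp\partial_t\varphi}_Q+\vdual{\ssigma_0^\perp}{\nablasp\varphi(0)}_\Omega .
\]
Since $\varphi\in Y$ implies $\varphi(t),\partial_t\varphi(t)\in H^1_0(\Omega)$ for a.e.\ $t$ and $\varphi(0)\in H^1_0(\Omega)$, while $\g^\perp(t),\ssigma(t),\ssigma_0^\perp\in\nablasp H^1_0(\Omega)^\perp$, every integrand above vanishes pointwise, so $D(v,\ssigma)=0$.

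Finally, the stability estimate is immediate: $\|v\|_{L^2(Q)}=0$, $\|\stdiv(v,-\ssigma)\|_{L^2(Q)}=0$, $\|\partial_t\ssigma-\nablasp v\|_{\bL^2(Q)}=\|\g^\perp\|_{\bL^2(Q)}$, $\|(v(0),\ssigma(0))\|_{L^2(\Omega)\times\bL^2(\Omega)}=\|\ssigma_0^\perp\|_{L^2(\Omega)}$, and a Cauchy--Schwarz estimate in time yields $\|\ssigma\|_{\bL^2(Q)}\lesssim\|\ssigma_0^\perp\|_{L^2(\Omega)}+\sqrt T\,\|\g^\perp\|_{L^2(Q)}$; summing the squared contributions gives $\|(v,\ssigma)\|_V\lesssim\|\g^\perp\|_{L^2(Q)}+\|\ssigma_0^\perp\|_{L^2(\Omega)}$. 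I do not expect a genuine obstacle here --- this is the easiest of the three surjectivity lemmas once one notices the decoupling of the solenoidal component; the only points requiring a line of care are the $H^1(\bL^2(\Omega))$-regularity and continuity of the time antiderivative (Lemma~\ref{lem:ttrace}) and the invariance $\ssigma(t)\in\nablasp H^1_0(\Omega)^\perp$ for all $t$, which is precisely what forces $D(v,\ssigma)$ to vanish.
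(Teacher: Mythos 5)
Your proposal is correct and takes essentially the same route as the paper: the identical explicit construction $v=0$, $\ssigma(t)=\ssigma_0^\perp+\int_0^t\g^\perp(s)\,ds$, with the same membership, trace, and stability checks. The only (minor) difference is in verifying $D(v,\ssigma)=0$: the paper integrates by parts in time and uses $\nablasp\varphi(T)=0$ so the boundary terms cancel, whereas you use pointwise $L^2(\Omega)$-orthogonality of $\g^\perp(s),\ssigma(s),\ssigma_0^\perp\in\nablasp H^1_0(\Omega)^\perp$ against $\nablasp\varphi(s)$, $\nablasp\partial_t\varphi(s)$, $\nablasp\varphi(0)\in\nablasp H^1_0(\Omega)$; both arguments are valid.
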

\begin{proof}
  Set $v:=0$ and $\ssigma(t) := \ssigma_0^\perp + \int_0^t \g^\perp(s)\,ds$.
  Then, as $\divsp\ssigma_0^\perp=0$ and $\divsp\g^\perp=0$, we conclude $\partial_t v-\divsp\ssigma=\divsp\ssigma=0$.
  Furthermore, $\partial_t\ssigma-\nablasp v = \partial_t\ssigma= \g^\perp$. Also, $v(0)=0\in L^2(\Omega)$ and
  $\ssigma(0)=\ssigma_0^\perp\in \bL^2(\Omega)$. 
  It remains to show that $(v,\ssigma)\in V_0$. Let $\varphi\in Y$ be given.
  Using $\nablasp\varphi(T) = 0$ we obtain with integration by parts that
  \begin{align*}
    D(u,\ssigma)(\varphi)= \int_0^T \vdual{\partial_t\ssigma}{\nablasp\varphi}_\Omega + \vdual{\ssigma}{\partial_t\nablasp\varphi}_\Omega\,ds
    + \vdual{\ssigma(0)}{\nablasp\varphi(0)}_\Omega=0.
  \end{align*}
  Therefore, $(v,\ssigma)\in V_0$.
  Finally, the stability estimate
  follows from the given representation of $\ssigma$ and $v=0$.
\end{proof}
%--------------------------------------------------------------------------------------------------------
%The latter results prove that $\cA\colon V_0\to L^2(Q)\times \bL^2(Q) \times L^2(\Omega) \times \bL^2(\Omega)$ is surjective. 
In the next result we prove that operator $\cA$ is also injective.
\begin{lemma}\label{lem:FOS:inj}
  If $\cA(v,\ssigma) = 0$ for some $(v,\ssigma)\in V_0$, then $(v,\ssigma) = 0$.
\end{lemma}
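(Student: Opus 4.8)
The plan is to exploit the fact that the kernel of $\cA$ consists of elements $(v,\ssigma)\in V_0$ satisfying the homogeneous first-order system together with homogeneous initial conditions, and then recognize these equations as the ultraweak (and weak) formulation of the second-order wave equation with zero data, whose uniqueness we already have in hand. So suppose $(v,\ssigma)\in V_0$ with $\cA(v,\ssigma)=0$: this means $\partial_t v-\divsp\ssigma=0$, $\partial_t\ssigma-\nablasp v=0$ in $\bL^2(Q)$, and $v(0)=0$, $\ssigma(0)=0$ in $L^2(\Omega)$, $\bL^2(\Omega)$. I would first use the second equation $\partial_t\ssigma=\nablasp v\in L^2(\bH^{-1}(\Omega))$ and combine it with the first to derive, in the distributional sense, $\partial_{tt} v=\divsp\partial_t\ssigma=\divsp\nablasp v=\Deltasp v$; that is, $v$ solves $\partial_{tt}v-\Deltasp v=0$ in the sense of distributions on $Q$. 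The membership $(v,\ssigma)\in V_0=\ker D$ is what encodes the homogeneous Dirichlet condition on $v$ (cf. Lemma~\ref{lem:bcdirichlet}), though here $v$ need not lie in $L^2(H^1(\Omega))$, so the boundary condition has to be understood in the weak sense carried by $D$.

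The cleanest route is to test against $\varphi\in Y$ and show $v$ is the ultraweak solution with zero data, then invoke uniqueness in Theorem~\ref{lm:wave:uw}. Concretely, for $\varphi\in\xX$ with $\Deltasp\varphi\in L^2(L^2(\Omega))$ (equivalently, for $\varphi\in Y$), I would compute $\int_0^T\vdual{v}{\partial_{tt}\varphi-\Deltasp\varphi}_\Omega\,ds$ by integrating by parts in time using $\partial_t v-\divsp\ssigma=0$ and $\partial_t\ssigma-\nablasp v=0$, moving derivatives off $\varphi$ and onto $v,\ssigma$, and collecting the boundary-in-time terms at $t=0$ (those at $t=T$ vanish since $\varphi(T)=\partial_t\varphi(T)=0$). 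Using $v(0)=0$, $\ssigma(0)=0$ and the defining relation $D(v,\ssigma)(\varphi)=0$, all terms should cancel, giving $\int_0^T\vdual{v}{\partial_{tt}\varphi-\Deltasp\varphi}_\Omega\,ds=0$ for all admissible $\varphi$. That is exactly the ultraweak wave equation with $f=0$, $g_0=0$, $g_1=0$, so Theorem~\ref{lm:wave:uw} forces $v=0$. Once $v=0$, the equation $\partial_t\ssigma=\nablasp v=0$ together with $\ssigma(0)=0$ and Lemma~\ref{lem:ttrace}~(i) gives $\ssigma=0$, completing the proof.

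The main obstacle I anticipate is the bookkeeping in the integration-by-parts step: one must justify that the temporal integrations by parts are legitimate at the available regularity (using $v\in L^2(L^2(\Omega))$, $\partial_t v\in L^2(H^{-1}(\Omega))$, $\ssigma\in\bL^2(Q)$, $\partial_t\ssigma\in L^2(\bH^{-1}(\Omega))$, and the continuity-in-time embeddings from Lemma~\ref{lem:ttrace}~(ii)), and that the various duality pairings $\dual{\cdot}{\cdot}_{H^{-1}(\Omega)\times H^1_0(\Omega)}$ and $\dual{\cdot}{\cdot}_{\bH^{-1}(\Omega)\times\bH^1_0(\Omega)}$ match up against the pairings appearing in the definition of $D$ and in the ultraweak formulation; in particular one needs $\partial_t\varphi\in L^2(H^1_0(\Omega))$ and $\Deltasp\varphi\in L^2(L^2(\Omega))$ so that the pairing with $\partial_t v$ and $v$ makes sense. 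This is the same style of argument already used in the proof of Lemma~\ref{lem:FOS:surj:2}, essentially run in reverse, so I would mirror that computation. An alternative, perhaps slicker, is to observe that $(v,\ssigma)\in\ker\cA$ with the surjectivity lemmas already in place shows that the right-hand side decomposes as $0=0+0+0$ and use the uniqueness built into each of Lemmas~\ref{lem:FOS:surj:1}--\ref{lem:FOS:surj:3}; but since those lemmas assert existence, not uniqueness, the direct argument via Theorem~\ref{lm:wave:uw} above is the one I would actually carry out.
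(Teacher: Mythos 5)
Your overall strategy is the one the paper follows (show that $v$ satisfies the ultraweak wave equation with vanishing data, invoke uniqueness from Theorem~\ref{lm:wave:uw}, then integrate $\partial_t\ssigma=\nablasp v=0$ from $\ssigma(0)=0$), but there is a genuine gap at the point you parenthetically dismiss: the test classes ``$\varphi\in\xX$ with $\Deltasp\varphi\in L^2(L^2(\Omega))$'' and ``$\varphi\in Y$'' are \emph{not} equivalent, and the proof does not close without bridging them. For a general $\varphi\in\xX$ with $\Deltasp\varphi\in L^2(Q)$ one does get $\partial_{tt}\varphi\in L^2(Q)$, but one does \emph{not} get $\partial_t\varphi\in L^2(H^1_0(\Omega))$; that extra regularity requires time-differentiable data (this is exactly the content of Lemma~\ref{lem:regularity}, which needs $f\in H^1(L^2(\Omega))$) and fails for a right-hand side that is merely in $L^2(Q)$. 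Without $\nablasp\partial_t\varphi\in L^2(Q)$ the term $\vdual{\ssigma}{\nablasp\partial_t\varphi}_Q$ in $D(v,\ssigma)(\varphi)$ is not even defined, and your temporal integrations by parts are not ``bookkeeping'' but simply unavailable. Conversely, if you restrict to $\varphi\in Y$, you obtain the orthogonality identity only for this smaller class, whereas the uniqueness statement of Theorem~\ref{lm:wave:uw} requires it for \emph{all} $\varphi\in\xX$ with $\Deltasp\varphi\in L^2(Q)$; you would still have to argue that the $Y$-test functions suffice.

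The paper closes precisely this gap by a regularization argument: given $\varphi\in\xX$ with $\Deltasp\varphi\in L^2(Q)$, set $g=\partial_{tt}\varphi-\Deltasp\varphi$, approximate $g_n\to g$ in $L^2(Q)$ with $g_n\in C_0^\infty(Q)$, and let $\varphi_n$ solve the backward wave equation with datum $g_n$. Lemma~\ref{lem:regularity} (this is the reason that lemma is in the paper) then yields $\partial_t\varphi_n\in L^\infty(H^1_0(\Omega))$ and $\partial_{tt}\varphi_n\in L^\infty(L^2(\Omega))$, so $\varphi_n\in Y$; for these one carries out your computation (space-time integration by parts plus $D(v,\ssigma)(\varphi_n)=0$, $v(0)=0$, $\ssigma(0)=0$) to get $\int_0^T\vdual{v}{\partial_{tt}\varphi_n-\Deltasp\varphi_n}_\Omega\,ds=0$, and finally passes to the limit $n\to\infty$, which is harmless since $\partial_{tt}\varphi_n-\Deltasp\varphi_n=g_n\to g$ in $L^2(Q)$ and $v\in L^2(Q)$. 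With this approximation step inserted, your argument becomes the paper's proof; without it, the step ``(equivalently, for $\varphi\in Y$)'' is where the proof breaks.
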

\begin{proof}
  Let $(v,\ssigma)\in V_0$ be given with $\cA(v,\ssigma)=0$, i.e., 
  \begin{align*}
    \partial_t v-\divsp\ssigma &= 0 = \partial_t \ssigma -\nablasp v, \\
    v(0) &= 0 = \ssigma(0).
  \end{align*}
  In what follows we show that $(v,\ssigma)$ satisfies the ultra-weak wave equation with vanishing data and, thus, $(v,\ssigma)=0$ by Theorem~\ref{lm:wave:uw}.
  Let $\varphi\in \xX$ with $\Deltasp \varphi\in L^2(Q)$. 
  Set $g:=\partial_{tt}\varphi-\Delta_x \varphi$ and let $g_n \in C_0^\infty(Q)$ be a sequence with $g_n\to g$ in $L^2(Q)$. Define $\varphi_n\in \xX$ to be the unique solution of the wave equation
  \begin{align*}
    \partial_{tt} \varphi_n - \Deltasp \varphi_n &= g_n \quad\text{in } \Omega, \\
    \varphi_n|_{(0,T)\times\partial\Omega} &= 0, \\
    \varphi_n(T) &= 0 = \partial_t \varphi_n(T).
  \end{align*}
  By regularity estimates, see Lemma~\ref{lem:regularity}, we obtain that
  \begin{align*}
    \partial_t \varphi_n \in L^\infty(H_0^1(\Omega)), \quad 
    \partial_{tt} \varphi_n \in L^\infty(L^2(\Omega)).
  \end{align*}
  In particular, $\partial_{tt}\varphi_n \in L^2(Q)$, thus, $\Deltasp\varphi_n = \partial_{tt}\varphi_n-g_n\in L^2(Q)$.
  Integration by parts with $\partial_t\varphi_n|_{\partial Q\setminus \{0\}\times\Omega} = 0$ and $v(0)=0$ yields
  \begin{align}\label{eq:proof:inj1}
  \begin{split}
    \int_0^T \vdual{v}{\partial_{tt}\varphi_n}_\Omega -\vdual{\ssigma}{\nablasp \partial_t \varphi_n}_\Omega \,ds 
    &= \int_0^T \vdual{(v,-\ssigma)}{\stnabla \partial_t\varphi_n}_\Omega \,ds 
    \\
    &= -\int_0^T \vdual{\stdiv(v,-\ssigma)}{\varphi_n}_\Omega\, ds =0.
  \end{split}
  \end{align}
  Note that by the aforegoing observations we have that $\varphi_n \in Y$.
  Since $(v,\ssigma)\in \ker D$, $\partial_t\ssigma-\nablasp v=0$, $\ssigma(0)=0$ we get that
  \begin{align}\label{eq:proof:inj2}
  \begin{split}
    0&=D(v,\ssigma)(\varphi_n) 
    %\\
    %&= \int_0^T \vdual{\nablasp\varphi_n}{\partial_t\ssigma-\nablasp v}_\Omega + \vdual{\partial_t\nablasp\varphi_n}{\ssigma}_\Omega - \vdual{\Deltasp\varphi_n}{v}_\Omega \,ds + \vdual{\nablasp\varphi_n(0)}{\ssigma(0)}_\Omega \\
    = \int_0^T \vdual{\nablasp\partial_t\varphi_n}{\ssigma}_\Omega-\vdual{\Deltasp\varphi_n}v_\Omega \, ds.
  \end{split}
  \end{align}
  Using~\eqref{eq:proof:inj1}-\eqref{eq:proof:inj2} we see that
  \begin{align*}
    \int_0^T \vdual{v}{\partial_{tt}\varphi_n-\Deltasp\varphi_n}_\Omega \, ds 
    &= \int_0^T \vdual{\ssigma}{\nablasp\partial_t\varphi_n}_\Omega-\vdual{v}{\Deltasp\varphi_n}_\Omega\, ds 
    \\
    &= \int_0^T \vdual{\Deltasp\varphi_n}{v}_\Omega - \vdual{v}{\Deltasp\varphi_n}_\Omega\, ds = 0.
  \end{align*}
  Taking the limit $n\to\infty$ we infer that
  \begin{align*}
    \int_0^T \vdual{v}{\partial_{tt}\varphi-\Deltasp\varphi}_\Omega \,ds = 0 \quad\text{for all } \varphi\in \xX \text{ with } \Deltasp\varphi \in L^2(Q).
  \end{align*}
  By virtue of Theorem~\ref{lm:wave:uw} we conclude that $v=0$. Finally, $\partial_t \ssigma = \nablasp v = 0$ together with $\ssigma(0)=0$ gives $\ssigma = 0$, thus, $(v,\ssigma)=0$ which finishes the proof of injectivity.
\end{proof}
%--------------------------------------------------------------------------------------------------------
\subsection{Proof of Theorem~\ref{thm:density}}\label{sec:density}
%--------------------------------------------------------------------------------------------------------
Density is based on the following definitions and observations which are related to
the wave equation with terminal conditions instead of initial conditions.
Analogously as for the definition of the space $V$, we may invoke Lemma~\ref{lem:trace} to define
the trace operator $\gamma_T$ at final time $T$, and the spaces
\begin{align*}
  V_T &:= \left\{ (v,\ssigma)\in \wilde V \mid \gamma_T(v,\ssigma)\in L^2(\Omega)\times \bL^2(\Omega) \right\}\\
  Y_T &= \left\{\varphi \in H^1(H_0^1(\Omega))\cap H^2(L^2(\Omega)) \cap L^2(H(\Deltasp;\Omega))\mid \varphi(0) = 0,\, \partial_t\varphi(0) = 0\right\},
\end{align*}
where $V_T$ and $Y_T$ are equipped with their respective graph norms.
%\begin{align*}
%  \|\varphi\|_{Y_T}^2 := \|\nablasp \varphi\|_Q^2 + \|\nablasp \partial_t\varphi\|_Q^2 + \|\Deltasp \varphi\|_Q^2 + \|\partial_{tt}\varphi\|_Q^2.
%\end{align*}
Then, we define the linear and continuous operator $D_T\colon V_T\to Y_T'$ by
\begin{align*}
  D_T(v,\ssigma)(\varphi) = \vdual{\partial_t\ssigma-\nablasp v}{\nablasp\varphi}_Q + \vdual{\ssigma}{\nablasp\partial_t\varphi}_Q
  - \vdual{v}{\Deltasp \varphi}_Q - \vdual{\ssigma(T)}{\nablasp\varphi(T)}_{\Omega}
\end{align*}
for $(v,\ssigma)\in V_T$, $\varphi\in V_T$.

We start with a different presentation of graph space $\wilde V$. Introducing the operator $B\colon L^2(Q)^{1+d} \to L^2(Q)^{(1+d)\times (1+d)}$ we write 
\begin{align*}
  B(v,\ssigma) = \begin{pmatrix}
    v & -\ssigma_1 & \cdots & \cdots & -\ssigma_d \\
    \ssigma_1 & -v & 0 & \cdots & 0 \\
    \vdots & 0 & -v & 0 & \vdots \\
    \vdots & \ddots & \ddots & \ddots & \vdots \\
    \ssigma_d & 0 & \cdots & & -v
  \end{pmatrix}
\end{align*}
Then, operator $\cA_0$ formally reads $\cA_0(v,\ssigma) = \stDiv B(v,\ssigma)$. Clearly, 
\begin{align*}
  \wilde V = \{ (v,\ssigma)\in L^2(Q)^{1+d}\mid \stDiv B(v,\ssigma) \in L^2(Q)^{1+d}\}.
\end{align*}
Here $\stDiv$ denotes the $\stdiv$ operator applied row-wise. Further, for elements in $\wilde V$ we can define a trace (in the sense of normal traces), i.e., 
\begin{align*}
  \dual{B(v,\ssigma)\nn}{\pphi}_{\partial Q} = \vdual{\stDiv B(v,\ssigma)}{\pphi}_Q + \vdual{B(v,\ssigma)}{\stNabla\pphi}_Q \quad\forall \pphi\in H^1(Q)^{1+d}.
\end{align*}
Here, the $j$th row of $\stNabla \pphi$ is $\stnabla(\pphi_j)$. A simple computation yields that 
\begin{align*}
  \vdual{B(v,\ssigma)}{\stNabla\pphi}_Q = \vdual{(v,\ssigma)}{\stDiv B\pphi}_Q.
\end{align*}

\begin{proof}[Proof of Theorem~\ref{thm:density}]
  We will prove a slightly different result: Let $H^1_{J\times \partial\Omega}(Q)$ be the space of
functions in $H^1(Q)$ vanishing on $J\times\partial\Omega\subset\partial Q$. We will show that
    $W := H_{J\times \partial\Omega}^1(Q) \times (L^2(\bH(\divsp;\Omega))\cap H^1(\bL^2(\Omega)))$
    is dense in $V_0$. Note that we have the continuous embedding $W\hookrightarrow V_0$, and
    the densities of 
    \begin{align*}
      \{v\in C^\infty(\overline Q)\mid v|_{J\times\partial\Omega}=0\} &\quad\text{ in }\quad
      H_{J\times \partial\Omega}^1(Q),\\
      C^\infty(\overline J)\otimes C^\infty(\overline \Omega)^d &\quad\text{ in }\quad
      L^2(\bH(\divsp;\Omega))\cap H^1(\bL^2(\Omega)),
    \end{align*}
    where the second density result follows from Lemma~\ref{lem:density} (ii)
  and the well-known density of $C^\infty(\overline\Omega)^d$ in $\bH(\divsp;\Omega)$.
It is therefore enough to prove that $W$ is dense in $V_0$.

  Density of $W$ is equivalent to showing that for each $\ell\in (V_0)'$ with $\ell( (v,\ssigma)) = 0$ for all $(v,\ssigma)\in W$ implies that $\ell = 0$.
  Let $\ell\in (V_0)'$ be given. The Riesz representation theorem and Theorem~\ref{thm:main} imply
  that there exists a unique $(w,\cchi)\in V_0$ such that 
  \begin{align}\label{eq:proof:dens1}
    \ell((v,\ssigma)) = \vdual{\cA_0(w,\cchi)}{\cA_0(v,\ssigma)}_Q + \vdual{(w(0),\cchi(0))}{(v(0),\ssigma(0))}_\Omega 
  \end{align}
  for all $(v,\ssigma) \in V_0$.
  
  Suppose that $\ell((v,\ssigma)) = 0$ for all $(v,\ssigma) \in W$. 
  Take $(v,\ssigma) \in \mathcal{D}(Q)^{1+d}$ and set $(u,\ttau) = \cA_0(w,\cchi)\in L^2(\Omega)^{1+d}$. The definition of weak derivatives implies that 
  \begin{align*}
    0 = \ell((v,\ssigma)) = \vdual{(u,\ttau)}{\cA_0(v,\ssigma)}_Q = -\dual{\cA_0(u,\ttau)}{(v,\ssigma)}_{\cD'(Q)\times \mathcal{D}(Q)}.
  \end{align*}
  Therefore, $L^2(Q)^{1+d}\ni\cA_0(u,\ttau) = 0$. This means that $(u,\ttau)\in \wilde V$.

  Next, we prove that $(u(T),\ttau(T))=0$.
  To that end take $(v,\ssigma)\in [C^\infty(\overline J)\otimes \mathcal{D}(\Omega)]^{1+d} \subset \boldsymbol{C}^\infty \subset W$ with $(v(0),\ssigma(0)) = 0$. Observe that
  \begin{align*}
    0 = \ell( (v,\ssigma) ) &= \vdual{(u,\ttau)}{\stDiv B(v,\ssigma)}_Q \\
    &= -\vdual{\stDiv B(u,\ttau)}{(v,\ssigma)}_Q + \dual{B(u,\ttau)\nn}{(v,\ssigma)}_{\partial Q} \\
    &= \dual{B(u,\ttau)\nn}{(v,\ssigma)}_{\{T\}\times \Omega}
  \end{align*}
  where we have used the already established fact that $\cA_0(u,\ttau)=0$,
  and $v,\ssigma$ vanish on $\partial Q \setminus \{T\}\times\Omega$.
  We conclude that $u(T) = 0$ as well as $\ttau(T) = 0$. 

  In what follows we show that $(u,\ttau)\in \ker D_T$.
  Take $\varphi\in Y_T$ and note that $(0,\nablasp\varphi)\in W$.
  Then, using that $\vdual{(u,\ttau)}{\stDiv B(0,\nablasp\varphi)}_Q = 0$ (by~\eqref{eq:proof:dens1}) and $\ttau(T)=0$, $\stDiv B(u,\ttau) = 0$ we infer that
  \begin{align*}
    D_T(u,\ttau)(\varphi) &= \vdual{\partial_t\ttau-\nablasp u}{\nablasp \varphi}_Q + \vdual{\ttau}{\partial_t\nablasp\varphi}_Q -\vdual{u}{\Deltasp\varphi}_Q -\vdual{\ttau(T)}{\nablasp\varphi(T)}_\Omega \\
    &= \vdual{\stDiv B(u,\ttau)}{(0,\nablasp\varphi)}_Q + \vdual{(u,\ttau)}{\stDiv B(0,\nablasp\varphi)}_Q = 0.
  \end{align*}
  This proves that $D_T(u,\ttau)(\nablasp\varphi) = 0$ for all $\varphi \in Y_T$. 

  To conclude the proof we note that $(u,\ttau)\in \wilde V$ with $u(T) = 0$, $\ttau(T) = 0$ and $(u,\ttau)\in \ker D_T$. Arguing as in Lemma~\ref{lem:FOS:inj} (interchanging initial and terminal conditions) yields $(u,\ttau) = 0$, and, therefore, $\cA_0(w,\cchi) = (u,\ttau) = 0$. Then, 
  \begin{align*}
    0 = \ell((v,\ssigma)) &= \vdual{\cA_0(w,\cchi)}{\cA_0(v,\ssigma)}_Q + \vdual{(w(0),\cchi(0))}{(v(0),\ssigma(0))}_\Omega
    \\
    &= \vdual{(w(0),\cchi(0))}{(v(0),\ssigma(0))}_\Omega \quad\forall (v,\ssigma)\in [C^\infty(\overline J)\otimes \mathcal{D}(\Omega)]^{1+d} \subset \boldsymbol{C}^\infty.
  \end{align*}
  It follows that $(w(0),\cchi(0)) = 0$. This together with $\cA_0(w,\cchi) = 0$ and~\eqref{eq:proof:dens1} proves $\ell = 0$ which finishes the proof.   
\end{proof}

%--------------------------------------------------------------------------------------------------------
\section{A least-squares finite element method}\label{sec:LS}
%--------------------------------------------------------------------------------------------------------
In this section we define and analyze a least-squares finite element method based on Theorem~\ref{thm:main}.
Then, we discuss an adaptive algorithm which we employ for our numerical experiments. 

\subsection{Least-squares formulation and discretization}
Given $f\in L^2(Q)$, $\g \in\bL^2(Q)$, $v_0\in L^2(\Omega)$, $\ssigma_0\in \bL^2(\Omega)$, define the (quadratic) functional $G\colon U\to\R$ by
\begin{align*}
  G(v,\ssigma;f,\g,v_0,\ssigma_0) &= \|\cA(v,\ssigma)-(f,\g,v_0,\ssigma_0)\|_{L^2(Q)^{1+d}\times L^2(\Omega)^{1+d}}^2\\
  &=
  \|\partial_t v-\divsp\ssigma-f\|_Q^2 + \|\partial_t\ssigma-\nablasp v-\g\|_Q^2
  \\
  &\qquad \qquad +\|v(0)-v_0\|_\Omega^2 + \|\ssigma(0)-\ssigma_0\|_\Omega^2.
\end{align*}
It is immediate that a solution to the first-order system wave equation, i.e, $\cA(v,\ssigma)=(f,\g,v_0,\ssigma_0)$ is a minimizer of $G$. 
Note that Theorem~\ref{thm:main} implies the norm equivalence
\begin{align}\label{eq:LS:normeq}
  G(v,\ssigma;0,0,0,0) \eqsim \|(v,\ssigma)\|_V^2 \quad\forall (v,\ssigma)\in V_0.
\end{align}
Let $V_h\subseteq V_0$ denote a closed subspace and consider the minimization problem
\begin{align}\label{eq:LS:discrete}
  (v_h,\ssigma_h) = \operatorname*{arg\,min}_{(w,\cchi)\in V_h} G(v_h,\ssigma_h;f,\g,v_0,\ssigma_0)
\end{align}
Introducing the bilinear form $a\colon V_0\times V_0\to \R$, and the linear form $F\colon V_0\to \R$, 
\begin{align*}
  a(v,\ssigma;w,\cchi) &= \vdual{\partial_t v-\divsp\ssigma}{\partial_t w-\divsp\cchi}_Q 
  + \vdual{\partial_t\ssigma-\nablasp v}{\partial_t\cchi-\nablasp w}_Q \\
  &\qquad + \vdual{v(0)}{w(0)}_\Omega + \vdual{\ssigma(0)}{\cchi(0)}_\Omega, \\
  F(v,\ssigma) &= \vdual{f}{\partial_t w-\divsp\cchi}_Q + \vdual{\g}{\partial_t\cchi-\nablasp w}_Q
  \\
  &\qquad + \vdual{v_0}{w(0)}_\Omega + \vdual{\ssigma_0}{\cchi(0)}_\Omega
\end{align*}
for all $(v,\ssigma), (w,\cchi)\in V_0$, the Euler--Lagrange equations for problem~\eqref{eq:LS:discrete} read: Find $(v_h,\ssigma_h)\in V_h$ such that
\begin{align}\label{eq:LS:EL}
  a(v_h,\ssigma_h;w_{h},\cchi_{h}) = F(w_{h},\cchi_{h}) \quad\forall
  (w_{h},\cchi_{})\in V_h.
\end{align}
Based on the norm equivalence~\eqref{eq:LS:normeq}, the theory on least-squares finite element methods, see, in particular,~\cite[Chapter~3, Theorem~2.5]{BochevG_09} implies the following result. 
\begin{theorem}\label{thm:LS}
  Let $f\in L^2(Q)$, $\g\in \bL^2(Q)$, $v_0\in L^2(\Omega)$, $\ssigma_0\in \bL^2(\Omega)$ be given. Problems~\eqref{eq:LS:discrete} and~\eqref{eq:LS:EL} are equivalent and admit a unique solution $(v_h,\ssigma_h)\in V_h$. 

  Let $(v,\ssigma)\in V_0$ denote the unique solution of
  \begin{align*}
    \cA(v,\ssigma) = (f,\g,v_0,\ssigma_0).
  \end{align*}
  Then, there exists a constant $C>0$ only depending on the norm equivalence constants from~\eqref{eq:LS:normeq} such that
  \begin{align*}
    \|(v,\ssigma)-(v_h,\ssigma_h)\|_V \leq C \min_{(w_{h},\cchi_{h})\in V_h}
    \|(v,\ssigma)-(w_{h},\cchi_{h})\|_V.
  \end{align*}
\end{theorem}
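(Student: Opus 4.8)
The plan is to invoke the standard abstract theory of least-squares finite element methods directly, since all the hypotheses have already been established. The key structural fact is the norm equivalence~\eqref{eq:LS:normeq}, which says that the homogeneous least-squares functional $G(v,\ssigma;0,0,0,0)$ is equivalent to the square of the graph norm $\|(v,\ssigma)\|_V^2$ on the Hilbert space $V_0$. Equivalently, the bilinear form $a(\cdot;\cdot)$ is symmetric, bounded, and coercive (elliptic) on $V_0\times V_0$: boundedness follows because each term is a product of $L^2$-continuous linear functionals of its arguments, and coercivity $a(v,\ssigma;v,\ssigma) = G(v,\ssigma;0,0,0,0)\gtrsim \|(v,\ssigma)\|_V^2$ is exactly~\eqref{eq:LS:normeq}. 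The linear form $F$ is bounded on $V_0$ for the same reason, using $f\in L^2(Q)$, $\g\in\bL^2(Q)$, $v_0\in L^2(\Omega)$, $\ssigma_0\in\bL^2(\Omega)$ together with the continuity of the trace $\gamma_0$ on $V_0$.

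First I would record that~\eqref{eq:LS:discrete} and~\eqref{eq:LS:EL} are equivalent: the functional $(w,\cchi)\mapsto G(w,\cchi;f,\g,v_0,\ssigma_0)$ is quadratic with the symmetric positive-definite principal part $a$, so its unique minimizer over the closed (hence complete) subspace $V_h\subseteq V_0$ is characterized by the vanishing of its Gâteaux derivative, which is precisely~\eqref{eq:LS:EL}; this is the standard Euler--Lagrange computation. Existence and uniqueness of $(v_h,\ssigma_h)\in V_h$ then follow from the Lax--Milgram lemma applied on the Hilbert space $V_h$ (with the inherited norm), using the boundedness and coercivity of $a$ and the boundedness of $F$. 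Since $\cA(v,\ssigma)=(f,\g,v_0,\ssigma_0)$ with $(v,\ssigma)\in V_0$ (this solution exists and is unique by Theorem~\ref{thm:main}), the exact solution satisfies the continuous Euler--Lagrange equation $a(v,\ssigma;w,\cchi)=F(w,\cchi)$ for all $(w,\cchi)\in V_0$, in particular for all $(w_h,\cchi_h)\in V_h\subseteq V_0$.

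For the quasi-optimality estimate I would use Galerkin orthogonality: subtracting~\eqref{eq:LS:EL} from the continuous equation gives $a(v-v_h,\ssigma-\ssigma_h;w_h,\cchi_h)=0$ for all $(w_h,\cchi_h)\in V_h$. Then for any $(w_h,\cchi_h)\in V_h$, writing $e=(v,\ssigma)-(v_h,\ssigma_h)$ and using coercivity, boundedness, and orthogonality with the test function $(w_h,\cchi_h)-(v_h,\ssigma_h)\in V_h$, the Céa-type argument gives
\begin{align*}
  c_1\|e\|_V^2 \leq a(e;e) = a(e;(v,\ssigma)-(w_h,\cchi_h)) \leq c_2\|e\|_V\,\|(v,\ssigma)-(w_h,\cchi_h)\|_V,
\end{align*}
so $\|e\|_V\leq (c_2/c_1)\|(v,\ssigma)-(w_h,\cchi_h)\|_V$; taking the infimum over $(w_h,\cchi_h)\in V_h$ and noting that it is attained since $V_h$ is closed yields the claim with $C=c_2/c_1$ depending only on the equivalence constants in~\eqref{eq:LS:normeq}. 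This is exactly the content of~\cite[Chapter~3, Theorem~2.5]{BochevG_09}, so in the write-up one may simply cite that theorem after verifying its hypotheses. There is no real obstacle here; the only thing requiring care is the bookkeeping of constants, namely checking that the continuity constant of $a$ and the coercivity constant both reduce to the constants in the norm equivalence~\eqref{eq:LS:normeq} (the continuity constant of $a$ is absolute, equal to $1$ up to the trace bound already absorbed into $\|\cdot\|_V$), so that $C$ genuinely depends only on those.
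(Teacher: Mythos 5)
Your proposal is correct and follows essentially the same route as the paper, which simply cites the standard least-squares theory (\cite[Chapter~3, Theorem~2.5]{BochevG_09}) once the norm equivalence~\eqref{eq:LS:normeq} from Theorem~\ref{thm:main} is in place. Your explicit Lax--Milgram/Euler--Lagrange/C\'ea argument is just the spelled-out content of that cited theorem, with the constants correctly traced back to the equivalence constants in~\eqref{eq:LS:normeq}.
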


In this work we study a simple space-time discretization based on piecewise polynomials that are globally continuous. 
The motivation is simply a practical one since such finite element spaces are implemented in almost all finite element software packages.
Let $\cT$ denote a decomposition of $Q = J\times \Omega$ into open simplices in $\R^{1+d}$. Let $h_\cT\in L^\infty(Q)$ denote the local mesh-width function given by $h_\cT|_T = \diam(T)$. 
Let $P^p(T)$ denote the space of polynomials of degree $\leq p\in \N_0$ on $T\in\cT$ and set
\begin{align*}
  P^p(\cT) &= \{v\in L^2(Q)\mid v|_T \in P^p(T) \, \forall T\in\cT\}, \qquad S^p(\cT) = P^p(\cT) \cap H^1(Q), \\
S_\Gamma^p(\cT) &= S^p(\cT)\cap H_{J\times \partial\Omega}^1(Q) \quad\text{and}\quad V_{\cT,p} = S_\Gamma^p(\cT)\times [S^p(\cT)]^d.
\end{align*}
By Lemma~\ref{lem:bcdirichlet} we have that $V_{\cT,p}\subset V_0$.
Our LSFEM for the wave equation is then given by~\eqref{eq:LS:EL} with $V_h = V_{\cT,p}$.

A direct consequence of Theorem~\ref{thm:LS} and standard approximation results, cf.~\cite{BrennerScott}, is the following result.
\begin{corollary}\label{cor:rates}
  Let $p\in \N$ and set $V_h = V_{\cT,p}$. Using the notation of Theorem~\ref{thm:LS},
  suppose that the exact solution satisfies $(v,\ssigma)\in H^{p+1}(Q)^{1+d}$. Then, 
  \begin{align*}
    \|(v,\ssigma)-(v_h,\ssigma_h)\|_V \lesssim \|h_\cT\|_{L^\infty(Q)}^{p} \|(v,\ssigma)\|_{H^{p+1}(Q)^{1+d}}.
  \end{align*}
\end{corollary}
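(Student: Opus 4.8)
\textbf{Proof proposal for Corollary~\ref{cor:rates}.} The plan is to combine the quasi-optimality of the least-squares scheme (Theorem~\ref{thm:LS}) with a standard componentwise interpolation estimate, the only point requiring care being the choice of an interpolation operator compatible with the discrete space $V_{\cT,p} = S_\Gamma^p(\cT)\times [S^p(\cT)]^d$, i.e.\ one whose scalar component is globally continuous \emph{and} vanishes on $J\times\partial\Omega$. First I would note that $(v,\ssigma)\in V_0$ with $v\in H^{p+1}(Q)\subset L^2(H^1(\Omega))$ implies, by Lemma~\ref{lem:bcdirichlet}, that $v\in L^2(H_0^1(\Omega))$, hence $v$ has vanishing trace on $J\times\partial\Omega$; this is exactly what is needed for a boundary-preserving quasi-interpolant to be admissible.

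Next, I would fix a Scott--Zhang-type quasi-interpolation operator $I_\Gamma\colon H^1_{J\times\partial\Omega}(Q)\to S_\Gamma^p(\cT)$ (preserving the homogeneous boundary condition on $J\times\partial\Omega$) and a componentwise Scott--Zhang operator $I\colon H^1(Q)^d\to [S^p(\cT)]^d$, and set $(w_h,\cchi_h) := (I_\Gamma v, I\ssigma)\in V_{\cT,p}\subset V_0$. Standard local approximation theory (cf.~\cite{BrennerScott}) gives, for $k\in\{0,1\}$,
\begin{align*}
  \|v-I_\Gamma v\|_{H^k(Q)} \lesssim \|h_\cT\|_{L^\infty(Q)}^{p+1-k}\,\|v\|_{H^{p+1}(Q)}, \qquad
  \|\ssigma-I\ssigma\|_{H^k(Q)} \lesssim \|h_\cT\|_{L^\infty(Q)}^{p+1-k}\,\|\ssigma\|_{H^{p+1}(Q)^d}.
\end{align*}
Writing out the $V$-norm of the interpolation error, the term $\|(v,\ssigma)-(I_\Gamma v,I\ssigma)\|_{L^2(Q)\times\bL^2(Q)}$ is $O(\|h_\cT\|_\infty^{p+1})$, while the graph terms $\|\stdiv(v-I_\Gamma v,-(\ssigma-I\ssigma))\|_{L^2(Q)}$ and $\|\partial_t(\ssigma-I\ssigma)-\nablasp(v-I_\Gamma v)\|_{\bL^2(Q)}$ are bounded by $\|v-I_\Gamma v\|_{H^1(Q)}+\|\ssigma-I\ssigma\|_{H^1(Q)}\lesssim \|h_\cT\|_\infty^{p}(\|v\|_{H^{p+1}(Q)}+\|\ssigma\|_{H^{p+1}(Q)^d})$ since these operators are combinations of first-order space-time derivatives. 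For the initial-trace contribution $\|(v-I_\Gamma v)(0)\|_{L^2(\Omega)}+\|(\ssigma-I\ssigma)(0)\|_{\bL^2(\Omega)}$ I would apply the trace inequality $\|w(0)\|_{L^2(\Omega)}\lesssim \|w\|_{H^1(Q)}$ for $w\in H^1(Q)$ (or a locally scaled trace inequality on the boundary-layer elements, which in fact yields the slightly better rate $p+\tfrac12$) together with the $k=1$ estimate above, obtaining again $O(\|h_\cT\|_\infty^{p})$. Summing the four contributions, the dominant order is $\|h_\cT\|_\infty^{p}$, and Theorem~\ref{thm:LS} then gives the asserted bound.

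I do not expect any genuine obstacle here: the argument is the textbook reduction via quasi-optimality followed by interpolation estimates. The only subtlety—and the step I would state explicitly—is the admissibility of the boundary-preserving quasi-interpolant, which rests on the observation (via Lemma~\ref{lem:bcdirichlet}) that the assumed $H^{p+1}(Q)$-regularity of $v$, combined with $(v,\ssigma)\in V_0$, forces $v$ to vanish on $J\times\partial\Omega$ in the strong trace sense; the remaining estimates are routine.
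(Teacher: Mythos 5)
Your proposal is correct and follows exactly the route the paper intends: the paper states the corollary as a direct consequence of the quasi-optimality in Theorem~\ref{thm:LS} combined with standard approximation estimates from~\cite{BrennerScott}, which is precisely your argument (boundary-preserving quasi-interpolant for the scalar component, componentwise interpolation for $\ssigma$, first-order graph terms giving the rate $p$, and a trace inequality at $t=0$). Your explicit justification via Lemma~\ref{lem:bcdirichlet} that $v$ vanishes on $J\times\partial\Omega$, so that the admissible choice lies in $V_{\cT,p}\subset V_0$, is the same observation the paper uses when asserting $V_{\cT,p}\subset V_0$.
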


\subsection{A posteriori error estimator and adaptive algorithm}
Let $(v_\cT,\ssigma_\cT)$ denote the solution of~\eqref{eq:LS:EL} with $V_h= V_{\cT,p}$.
Define for each $T\in\cT$ the error indicator
\begin{align*}
  \eta_T^2 &= \|\partial_t v-\divsp\ssigma-f\|_T^2 + \|\partial_t\ssigma-\nablasp v-\g\|_T^2
  \\
  &\qquad \qquad +\|v(0)-v_0\|_{\partial T\cap \Omega}^2 + \|\ssigma(0)-\ssigma_0\|_{\partial T\cap\Omega}^2
\end{align*}
and the overall estimator $\eta_\cT = \sqrt{\sum_{T\in\cT}\eta_T^2}$.

Using that $\cA(v,\ssigma) = (f,\g,v_0,\ssigma_0)$ and Theorem~\ref{thm:main} we obtain
\begin{align*}
  \eta_\cT^2 = \|\cA(v_\cT-v,\ssigma_\cT-\ssigma)\|_{L^2(Q)^{1+d}\times L^2(\Omega)^{1+d}}^2 \eqsim \|(v,\ssigma)-(v_\cT,\ssigma_\cT)\|_V^2.
\end{align*}
This means that $\eta_\cT$ is a reliable and efficient error estimator. 

We consider the standard adaptive loop
\begin{itemize}
  \item \textbf{Solve} problem~\eqref{eq:LS:EL},
  \item \textbf{Estimate} (compute error indicators $\eta_T$), 
  \item \textbf{Mark} elements of $\cT$ for refinement, 
  \item \textbf{Refine} at least all marked elements to obtain a new mesh $\cT$.
\end{itemize}
The marking step is realized via the bulk criterion: Given a parameter $\theta \in (0,1)$, find a (minimal) set of marked elements $\mathcal{M}\subset \cT$ with 
\begin{align*}
  \theta \eta_\cT^2 \leq \sum_{T\in\mathcal{M}} \eta_T^2.
\end{align*}

\begin{remark}\label{rem:convergence}
  Under some assumptions on mesh-refinement, discrete spaces and marking, one can show that the solutions generated by the adaptive algorithm converge to the exact solution, see~\cite{MR4138307} or~\cite{GantnerS_21}.
\end{remark}
%--------------------------------------------------------------------------------------------------------
\section{Numerical examples}\label{sec:num}
%--------------------------------------------------------------------------------------------------------
In this section we present some examples for $d=1$ and $d=2$.
We implemented the proposed LSFEM in \textsc{NETGEN/NGSOLVE} \cite{ngsolve}.
Our implementation can be found following the link \verb+github.com/tofuuhh/LSQwave+.

For all experiments we use the unit $(1+d)$-cube $Q = J\times \Omega = (0,1)\times (0,1)^d$ and choose $\theta = \tfrac14$ if the adaptive algorithm is employed.

\subsection{Smooth solution for $d=1$}
We consider $u(t,x) = \tfrac12 t^2 \sin(\pi x)$. Then, $v = \partial_t u$, $\ssigma = \partial_x u$ solve the first-order system wave equation with data $f= \partial_{tt}u - \partial_{xx} u$, $\g = v_0=\ssigma_0 = 0$.
On a sequence of uniform refinements of an initial triangulation of $Q$, Corollary~\ref{cor:rates} predicts rates which are perfectly aligned with the obtained results, see Figure~\ref{fig:1dsmooth}.

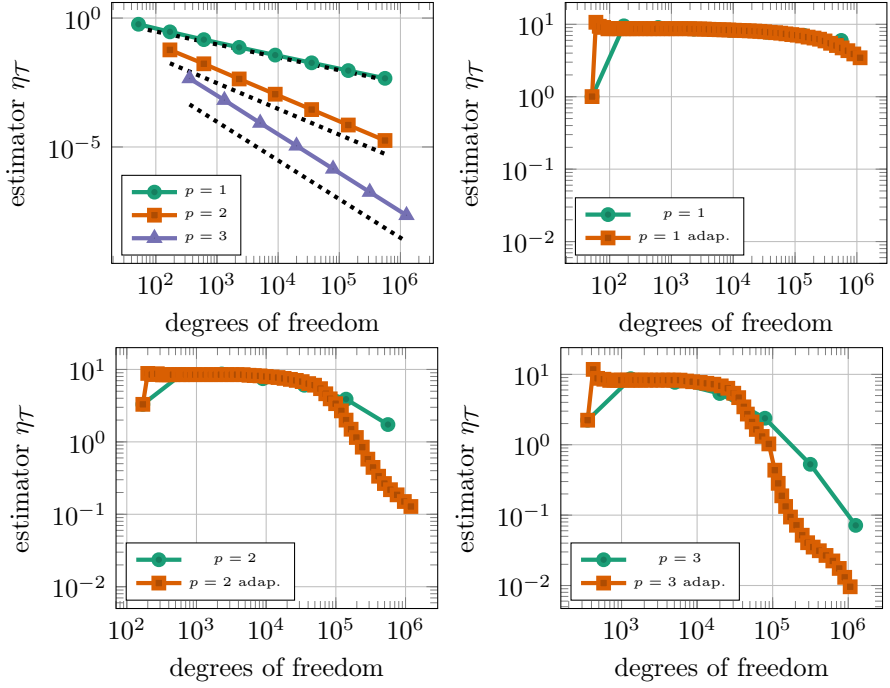
\begin{figure}
  \begin{center}
    \begin{tikzpicture}
\begin{loglogaxis}[
    width=0.45\textwidth,
%cycle list name=black white,
%cycle list name=exotic,
cycle list/Dark2-6,
% combine it with ’mark list*’:
cycle multiindex* list={
mark list*\nextlist
Dark2-6\nextlist},
every axis plot/.append style={ultra thick},
xlabel={degrees of freedom},
ylabel={estimator $\eta_{\cT}$},
grid=major,
legend entries={\tiny $p=1$,\tiny $p=2$,\tiny $p=3$},
legend pos=south west,
]

\addplot table [x=ndof,y=errEst,col sep = comma] {data/Example1_orderP1_maxDof500000_theta1.csv};
\addplot table [x=ndof,y=errEst,col sep = comma] {data/Example1_orderP2_maxDof500000_theta1.csv};
\addplot table [x=ndof,y=errEst,col sep = comma] {data/Example1_orderP3_maxDof500000_theta1.csv};

\addplot [black,dotted,mark=none] table [x=ndof,y expr={3*sqrt(\thisrowno{0})^(-1)},col sep = comma] {data/Example1_orderP1_maxDof500000_theta1.csv};
\addplot [black,dotted,mark=none] table [x=ndof,y expr={3*sqrt(\thisrowno{0})^(-2)},col sep = comma] {data/Example1_orderP2_maxDof500000_theta1.csv};
\addplot [black,dotted,mark=none] table [x=ndof,y expr={3*sqrt(\thisrowno{0})^(-3)},col sep = comma] {data/Example1_orderP3_maxDof500000_theta1.csv};
\end{loglogaxis}
\end{tikzpicture}
    \begin{tikzpicture}
\begin{loglogaxis}[
    width=0.45\textwidth,
%cycle list name=black white,
%cycle list name=exotic,
cycle list/Dark2-6,
% combine it with ’mark list*’:
cycle multiindex* list={
mark list*\nextlist
Dark2-6\nextlist},
every axis plot/.append style={ultra thick},
xlabel={degrees of freedom},
ylabel={estimator $\eta_{\cT}$},
grid=major,
legend entries={\tiny $p=1$,\tiny $p=1$ adap.},
legend pos=south west,
ymin=5e-3,ymax=20,
]

\addplot table [x=ndof,y=errEst,col sep = comma] {data/Example2_orderP1_maxDof500000_theta1.csv};
%\addplot table [x=ndof,y=errEst,col sep = comma] {data/Example2_orderP2_maxDof500000_theta1.csv};
%\addplot table [x=ndof,y=errEst,col sep = comma] {data/Example2_orderP3_maxDof500000_theta1.csv};
\addplot table [x=ndof,y=errEst,col sep = comma] {data/Example2_orderP1_maxDof1000000_theta0.25.csv};
%\addplot table [x=ndof,y=errEst,col sep = comma] {data/Example2_orderP2_maxDof1000000_theta0.25.csv};
%\addplot table [x=ndof,y=errEst,col sep = comma] {data/Example2_orderP3_maxDof1000000_theta0.25.csv};

% \addplot [black,dotted,mark=none] table [x=ndof,y expr={3*sqrt(\thisrowno{0})^(-1)},col sep = comma] {data/Example2_orderP1_maxDof500000_theta1.csv};
% \addplot [black,dotted,mark=none] table [x=ndof,y expr={3*sqrt(\thisrowno{0})^(-2)},col sep = comma] {data/Example2_orderP2_maxDof500000_theta1.csv};
% \addplot [black,dotted,mark=none] table [x=ndof,y expr={3*sqrt(\thisrowno{0})^(-3)},col sep = comma] {data/Example2_orderP3_maxDof500000_theta1.csv};
\end{loglogaxis}
\end{tikzpicture}
\begin{tikzpicture}
\begin{loglogaxis}[
    width=0.45\textwidth,
%cycle list name=black white,
%cycle list name=exotic,
cycle list/Dark2-6,
% combine it with ’mark list*’:
cycle multiindex* list={
mark list*\nextlist
Dark2-6\nextlist},
every axis plot/.append style={ultra thick},
xlabel={degrees of freedom},
ylabel={estimator $\eta_{\cT}$},
grid=major,
legend entries={\tiny $p=2$,\tiny $p=2$ adap.},
legend pos=south west,
ymin=5e-3,ymax=20,
]

\addplot table [x=ndof,y=errEst,col sep = comma] {data/Example2_orderP2_maxDof500000_theta1.csv};
\addplot table [x=ndof,y=errEst,col sep = comma] {data/Example2_orderP2_maxDof1000000_theta0.25.csv};

% \addplot [black,dotted,mark=none] table [x=ndof,y expr={3*sqrt(\thisrowno{0})^(-1)},col sep = comma] {data/Example2_orderP1_maxDof500000_theta1.csv};
% \addplot [black,dotted,mark=none] table [x=ndof,y expr={3*sqrt(\thisrowno{0})^(-2)},col sep = comma] {data/Example2_orderP2_maxDof500000_theta1.csv};
% \addplot [black,dotted,mark=none] table [x=ndof,y expr={3*sqrt(\thisrowno{0})^(-3)},col sep = comma] {data/Example2_orderP3_maxDof500000_theta1.csv};
\end{loglogaxis}
\end{tikzpicture}
\begin{tikzpicture}
\begin{loglogaxis}[
    width=0.45\textwidth,
%cycle list name=black white,
%cycle list name=exotic,
cycle list/Dark2-6,
% combine it with ’mark list*’:
cycle multiindex* list={
mark list*\nextlist
Dark2-6\nextlist},
every axis plot/.append style={ultra thick},
xlabel={degrees of freedom},
ylabel={estimator $\eta_{\cT}$},
grid=major,
legend entries={\tiny $p=3$,\tiny $p=3$ adap.},
legend pos=south west,
]

\addplot table [x=ndof,y=errEst,col sep = comma] {data/Example2_orderP3_maxDof500000_theta1.csv};
\addplot table [x=ndof,y=errEst,col sep = comma] {data/Example2_orderP3_maxDof1000000_theta0.25.csv};

% \addplot [black,dotted,mark=none] table [x=ndof,y expr={3*sqrt(\thisrowno{0})^(-1)},col sep = comma] {data/Example2_orderP1_maxDof500000_theta1.csv};
%\addplot [black,dotted,mark=none] table [x=ndof,y expr={2e2*sqrt(\thisrowno{0})^(-2)},col sep = comma] {data/Example2_orderP2_maxDof500000_theta1.csv};
% \addplot [black,dotted,mark=none] table [x=ndof,y expr={3*sqrt(\thisrowno{0})^(-3)},col sep = comma] {data/Example2_orderP3_maxDof500000_theta1.csv};
\end{loglogaxis}
\end{tikzpicture}
  \end{center}
  \caption{Left: Smooth solution and $d=1$. The dotted black lines correspond to $\OO(\dim(V_{\cT,p})^{-p/2})$. Right: Pulse for $d=1$ with uniform and adaptive refinements.}\label{fig:1dsmooth}
\end{figure}

\subsection{Gaussian pulse for $d=1$}
We consider a similar example as in~\cite[Section~6.4]{GopalakrishnanS_19} with a Gaussian pulse profile. 
In particular, we set our data to
\begin{align*}
  f &= 0 = \g, \quad
  v_0 = 2\kappa(x-\mu) e^{-\kappa(x-\mu)^2}, \quad \ssigma_0 = -v_0
\end{align*}
where $\kappa = 1000$, $\mu = 0.2$.
We note that an exact solution is not known. We compare results on a sequence of uniformly refined meshes and a sequence of locally refined meshes for $p=1,2,3$ where we use the bulk criterion for marking elements for refinement. 
Figure~\ref{fig:1dsmooth} visualizes the results. For $p=1$ it seems that, for both uniformly and adaptively refined meshes, there is a long preasymptotic phase. For $p=2$ and $p=3$ convergence is much better, particularly, when using the adaptive algorithm, though it is hard to identify experimental orders of convergence.

\subsection{Solution with non-matching boundary condition for $d=1$}\label{sec:num:1d:ex3}
We consider the data $f = 0=\g=\ssigma_0$, $v_0 = 1$. The exact solution $(v,\ssigma)$ is piecewise constant on the triangles 
\begin{align*}
  T_1 = \triangle(z_1,z_2,z_5), \, 
  T_2 = \triangle(z_2,z_3,z_5), \, 
  T_3 = \triangle(z_3,z_4,z_5), \, 
  T_4 = \triangle(z_4,z_1,z_5), \, 
\end{align*}
where $z_1 = (0,0)$, $z_2 = (1,0)$, $z_3(1,1)$, $z_4 = (0,1)$, and $z_5 = (\tfrac12,\tfrac12)$. To be more precise, 
\begin{align*}
  v(t,x) = \begin{cases}
    -1 & (t,x) \in T_2, \\
    1 &(t,x) \in T_4, \\
    0 & \text{else},
  \end{cases} \qquad
  \ssigma(t,x) = \begin{cases}
    1 & (t,x) \in T_1, \\
    -1 &(t,x) \in T_3, \\
    0 & \text{else}.
  \end{cases}
\end{align*}
Due to jumps in the solution, one expects reduced convergence. This is observed in Figure~\ref{fig:1djumps} for sequence of uniformly refined meshes. The adaptive loop improves convergence after a pre-asymptotic phase but the rates are not optimal with respect to the order $p$.
\begin{figure}
  \begin{center}
    \begin{tikzpicture}
\begin{loglogaxis}[
    width=0.45\textwidth,
%cycle list name=black white,
%cycle list name=exotic,
cycle list/Dark2-6,
% combine it with ’mark list*’:
cycle multiindex* list={
mark list*\nextlist
Dark2-6\nextlist},
every axis plot/.append style={ultra thick},
xlabel={degrees of freedom},
grid=major,
legend entries={\tiny $p=1$,\tiny $p=1$ adap.,\tiny $\alpha=1/8$,\tiny $\alpha=1/6$},
legend pos=south west,
ymax=1,
]

\addplot table [x=ndof,y=errEst,col sep = comma] {data/Example3_orderP1_maxDof1000000_theta1.csv};
\addplot table [x=ndof,y=errEst,col sep = comma] {data/Example3_orderP1_maxDof1000000_theta0.25.csv};

\addplot [black,dotted,mark=none] table [x=ndof,y expr={8e-1*sqrt(\thisrowno{0})^(-1/4)},col sep = comma] {data/Example3_orderP1_maxDof1000000_theta1.csv};
\addplot [black,dotted,mark=none] table [x=ndof,y expr={2e0*sqrt(\thisrowno{0})^(-1/3)},col sep = comma] {data/Example3_orderP1_maxDof1000000_theta0.25.csv};
\end{loglogaxis}
\end{tikzpicture}
\begin{tikzpicture}
\begin{loglogaxis}[
    width=0.45\textwidth,
%cycle list name=black white,
%cycle list name=exotic,
cycle list/Dark2-6,
% combine it with ’mark list*’:
cycle multiindex* list={
mark list*\nextlist
Dark2-6\nextlist},
every axis plot/.append style={ultra thick},
xlabel={degrees of freedom},
grid=major,
legend entries={\tiny $p=2$,\tiny $p=2$ adap.,\tiny $\alpha=1/6$,\tiny $\alpha=1/4$},
legend pos=south west,
]

\addplot table [x=ndof,y=errEst,col sep = comma] {data/Example3_orderP2_maxDof1000000_theta1.csv};
\addplot table [x=ndof,y=errEst,col sep = comma] {data/Example3_orderP2_maxDof1000000_theta0.25.csv};

\addplot [black,dotted,mark=none] table [x=ndof,y expr={1e0*sqrt(\thisrowno{0})^(-1/3)},col sep = comma] {data/Example3_orderP2_maxDof1000000_theta1.csv};
\addplot [black,dotted,mark=none] table [x=ndof,y expr={2e0*sqrt(\thisrowno{0})^(-1/2)},col sep = comma] {data/Example3_orderP2_maxDof1000000_theta0.25.csv};
\end{loglogaxis}
\end{tikzpicture}
\begin{tikzpicture}
\begin{loglogaxis}[
    width=0.45\textwidth,
%cycle list name=black white,
%cycle list name=exotic,
cycle list/Dark2-6,
% combine it with ’mark list*’:
cycle multiindex* list={
mark list*\nextlist
Dark2-6\nextlist},
every axis plot/.append style={ultra thick},
xlabel={degrees of freedom},
grid=major,
legend entries={\tiny $p=3$,\tiny $p=3$ adap.,\tiny $\alpha=1/6$,\tiny $\alpha=5/16$},
legend pos=south west,
]

\addplot table [x=ndof,y=errEst,col sep = comma] {data/Example3_orderP3_maxDof500000_theta1.csv};
\addplot table [x=ndof,y=errEst,col sep = comma] {data/Example3_orderP3_maxDof1000000_theta0.25.csv};

\addplot [black,dotted,mark=none] table [x=ndof,y expr={1e0*sqrt(\thisrowno{0})^(-1/3)},col sep = comma] {data/Example3_orderP3_maxDof500000_theta1.csv};
\addplot [black,dotted,mark=none] table [x=ndof,y expr={3e0*sqrt(\thisrowno{0})^(-5/8)},col sep = comma] {data/Example3_orderP3_maxDof1000000_theta0.25.csv};
\end{loglogaxis}
\end{tikzpicture}\includegraphics[width=0.45\textwidth,height=5cm]{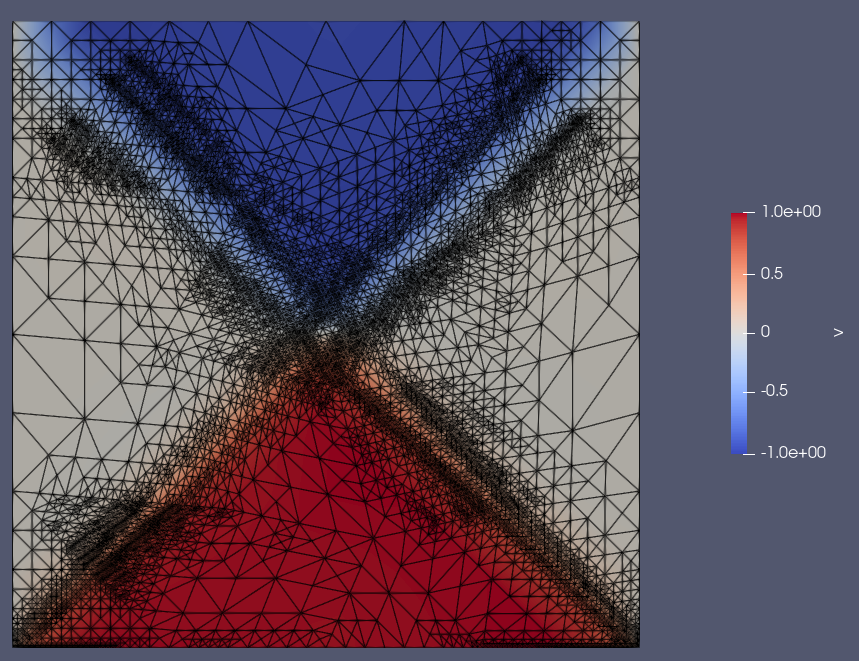}
  \end{center}
  \caption{Estimator for sequence of uniform and locally refined meshes for the example from Section~\ref{sec:num:1d:ex3}. 
    Dashed black lines indicate $\OO(\dim(V_{\cT,p})^{-\alpha})$.
  The bottom right plot visualizes the solution on a mesh with 56160 elements for $p=1$. Horizontal axis corresponds to spatial domain $\Omega = (0,1)$.}\label{fig:1djumps}
\end{figure}
In Figure~\ref{fig:1djumps} we also see the visualization of the solution component $v$ on locally refined mesh. As expected, we observe strong refinements towards the characteristics $t = x$, $t = 1-x$ and at the corner vertices where $t =0$.

\subsection{Smooth solution for $d=2$}
We consider $u(t,x) = \tfrac12 t^2 \sin(\pi x)\sin(\pi y)$. Then, $v = \partial_t u$, $\ssigma = \partial_x u$ solve the first-order system wave equation with data $f= \partial_{tt}u - \Deltasp u$, $\g = v_0=\ssigma_0 = 0$.
On a sequence of uniform refinements of an initial triangulation of $Q$, Corollary~\ref{cor:rates} predicts rates which are perfectly aligned with the obtained results, see Figure~\ref{fig:2dsmooth}.

\begin{figure}
  \begin{center}
    \begin{tikzpicture}
\begin{loglogaxis}[
    width=0.45\textwidth,
%cycle list name=black white,
%cycle list name=exotic,
cycle list/Dark2-6,
% combine it with ’mark list*’:
cycle multiindex* list={
mark list*\nextlist
Dark2-6\nextlist},
every axis plot/.append style={ultra thick},
xlabel={degrees of freedom},
ylabel={estimator $\eta_{\cT}$},
grid=major,
legend entries={\tiny $p=1$,\tiny $p=2$,\tiny $p=3$},
legend pos=south west,
]

\addplot table [x=ndof,y=errEst,col sep = comma] {data/Example12D_orderP1_maxDof1000000_theta1.csv};
\addplot table [x=ndof,y=errEst,col sep = comma] {data/Example12D_orderP2_maxDof1000000_theta1.csv};
\addplot table [x=ndof,y=errEst,col sep = comma] {data/Example12D_orderP3_maxDof1000000_theta1.csv};

\addplot [black,dotted,mark=none] table [x=ndof,y expr={3*(\thisrowno{0})^(-1/3)},col sep = comma] {data/Example12D_orderP1_maxDof1000000_theta1.csv};
\addplot [black,dotted,mark=none] table [x=ndof,y expr={2e1*(\thisrowno{0})^(-2/3)},col sep = comma] {data/Example12D_orderP2_maxDof1000000_theta1.csv};
\addplot [black,dotted,mark=none] table [x=ndof,y expr={1e2*(\thisrowno{0})^(-3/3)},col sep = comma] {data/Example12D_orderP3_maxDof1000000_theta1.csv};
\end{loglogaxis}
\end{tikzpicture}
    \begin{tikzpicture}
\begin{loglogaxis}[
    width=0.45\textwidth,
%cycle list name=black white,
%cycle list name=exotic,
cycle list/Dark2-6,
% combine it with ’mark list*’:
cycle multiindex* list={
mark list*\nextlist
Dark2-6\nextlist},
every axis plot/.append style={ultra thick},
xlabel={degrees of freedom},
ylabel={estimator $\eta_{\cT}$},
grid=major,
legend entries={\tiny $p=1$,\tiny $p=1$ adap.,\tiny $\alpha=7/27$,\tiny $\alpha=1/3$},
legend pos=south west,
]

\addplot table [x=ndof,y=errEst,col sep = comma] {data/Example22D_orderP1_maxDof1000000_theta1.csv};
\addplot table [x=ndof,y=errEst,col sep = comma] {data/Example22D_orderP1_maxDof1000000_theta0.25.csv};

\addplot [black,dashed,mark=none] table [x=ndof,y expr={2e1*(\thisrowno{0})^(-1/3*7/9)},col sep = comma] {data/Example22D_orderP1_maxDof1000000_theta1.csv};
\addplot [black,dotted,mark=none] table [x=ndof,y expr={1e1*(\thisrowno{0})^(-1/3)},col sep = comma] {data/Example22D_orderP1_maxDof1000000_theta0.25.csv};
\end{loglogaxis}
\end{tikzpicture}
\begin{tikzpicture}
\begin{loglogaxis}[
    width=0.45\textwidth,
%cycle list name=black white,
%cycle list name=exotic,
cycle list/Dark2-6,
% combine it with ’mark list*’:
cycle multiindex* list={
mark list*\nextlist
Dark2-6\nextlist},
every axis plot/.append style={ultra thick},
xlabel={degrees of freedom},
ylabel={estimator $\eta_{\cT}$},
grid=major,
legend entries={\tiny $p=2$,\tiny $p=2$ adap.,\tiny $\alpha=1/2$,\tiny $\alpha=2/3$},
legend pos=south west,
]

\addplot table [x=ndof,y=errEst,col sep = comma] {data/Example22D_orderP2_maxDof1000000_theta1.csv};
\addplot table [x=ndof,y=errEst,col sep = comma] {data/Example22D_orderP2_maxDof1000000_theta0.25.csv};

\addplot [black,dashed,mark=none] table [x=ndof,y expr={3e2*(\thisrowno{0})^(-1.5/3)},col sep = comma] {data/Example22D_orderP2_maxDof1000000_theta1.csv};
\addplot [black,dotted,mark=none] table [x=ndof,y expr={1e2*(\thisrowno{0})^(-2/3)},col sep = comma] {data/Example22D_orderP2_maxDof1000000_theta0.25.csv};
\end{loglogaxis}
\end{tikzpicture}
\begin{tikzpicture}
\begin{loglogaxis}[
    width=0.45\textwidth,
%cycle list name=black white,
%cycle list name=exotic,
cycle list/Dark2-6,
% combine it with ’mark list*’:
cycle multiindex* list={
mark list*\nextlist
Dark2-6\nextlist},
every axis plot/.append style={ultra thick},
xlabel={degrees of freedom},
ylabel={estimator $\eta_{\cT}$},
grid=major,
legend entries={\tiny $p=3$,\tiny $p=3$ adap.,\tiny $\alpha=2/3$,\tiny $\alpha=1$},
legend pos=south west,
%ymin=5e-3,ymax=3,
]

\addplot table [x=ndof,y=errEst,col sep = comma] {data/Example22D_orderP3_maxDof1000000_theta1.csv};
\addplot table [x=ndof,y=errEst,col sep = comma] {data/Example22D_orderP3_maxDof1000000_theta0.25.csv};

\addplot [black,dashed,mark=none] table [x=ndof,y expr={3e3*(\thisrowno{0})^(-2/3)},col sep = comma] {data/Example22D_orderP3_maxDof1000000_theta1.csv};
\addplot [black,dotted,mark=none] table [x=ndof,y expr={4e3*(\thisrowno{0})^(-3/3)},col sep = comma] {data/Example22D_orderP3_maxDof1000000_theta0.25.csv};
\end{loglogaxis}
\end{tikzpicture}
  \end{center}
  \caption{Top Left: Smooth solution and $d=2$. The dotted black lines correspond to $\OO(\dim(V_{\cT,p})^{-p/3})$. 
  Other: Pulse for $d=2$ with uniform and adaptive refinements. The dotted black lines correspond to $\OO(\dim(V_{\cT,p})^{-\alpha})$.}\label{fig:2dsmooth}
\end{figure}
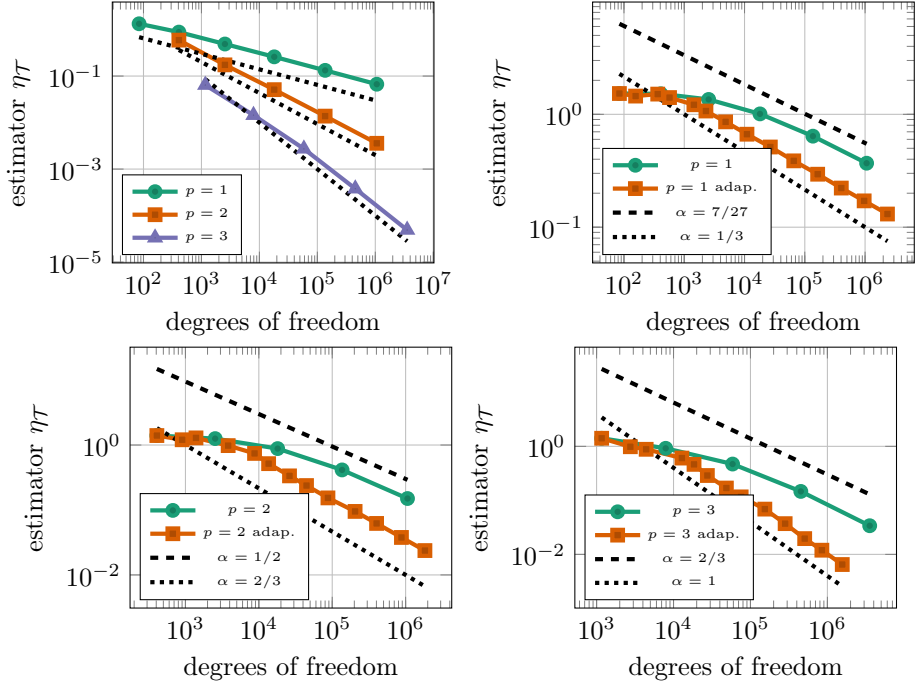

\subsection{Gaussian pulse for $d=2$}
We consider a pulse defined similar to~\cite[Section~6.6]{GopalakrishnanS_19} where we set
\begin{align*}
  v_0(x,y) &= -e^{-200\big( (x-0.2)^2 + (y-0.2)^2\big)}, \quad
  \ssigma_0(x,y) = -v_0(x,y)\begin{pmatrix} 1\\ 1\end{pmatrix}, \\
  f(t,x,y) &= 0,  \quad
  \g(t,x,y) = 400e^{-200\big( (x-0.2-t)^2 + (y-0.2-t)^2\big)} \begin{pmatrix} y-0.2-t\\x-0.2-t\end{pmatrix}.
\end{align*}
Figure~\ref{fig:2dsmooth} (right plot) displays the error estimator for sequences of uniformly and adaptively refined meshes. We find that adaptivity helps to achieve better rates in comparison. Though this might be a preasymptotic phenomena. 

\subsection{Solution with non-matching boundary condition for $d=2$}\label{sec:num:2d:ex3}
Here we set $f=\g=\ssigma_0=0$ and $v_0=1$. Note that $v_0$ does not match the boundary condition, i.e., $v_0\notin H_0^1(\Omega)$. 
%An exact solution can be represented via the Fourier series
%\begin{align*}
%  u(t,x,y) = \sum_{m,n=1}^\infty C_{mn} \sin(\sqrt{m^2+n^2}\pi t)\sin(m\pi x)\sin(n\pi y), \quad C_{mn} = \ldots
%\end{align*}
%and $v = \partial_t u$, $\ssigma = \nablasp u$.

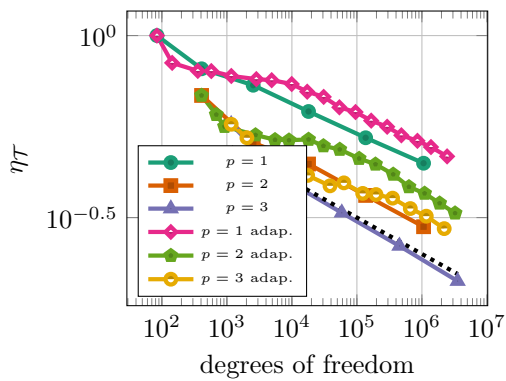
\begin{figure}
  \begin{center}
    \begin{tikzpicture}
\begin{loglogaxis}[
    width=0.49\textwidth,
%cycle list name=black white,
%cycle list name=exotic,
cycle list/Dark2-6,
% combine it with ’mark list*’:
cycle multiindex* list={
mark list*\nextlist
Dark2-6\nextlist},
every axis plot/.append style={ultra thick},
xlabel={degrees of freedom},
ylabel={$\eta_{\cT}$},
grid=major,
legend entries={\tiny $p=1$,\tiny $p=2$,\tiny $p=3$,\tiny $p=1$ adap.,\tiny $p=2$ adap.,\tiny $p=3$ adap.},
legend pos=south west,
]

\addplot table [x=ndof,y=errEst,col sep = comma] {data/Example32D_orderP1_maxDof1000000_theta1.csv};
\addplot table [x=ndof,y=errEst,col sep = comma] {data/Example32D_orderP2_maxDof1000000_theta1.csv};
\addplot table [x=ndof,y=errEst,col sep = comma] {data/Example32D_orderP3_maxDof1000000_theta1.csv};
\addplot table [x=ndof,y=errEst,col sep = comma] {data/Example32D_orderP1_maxDof2000000_theta0.25.csv};
\addplot table [x=ndof,y=errEst,col sep = comma] {data/Example32D_orderP2_maxDof2000000_theta0.25.csv};
\addplot table [x=ndof,y=errEst,col sep = comma] {data/Example32D_orderP3_maxDof2000000_theta0.25.csv};

\addplot [black,dotted,mark=none] table [x=ndof,y expr={(\thisrowno{0})^(-0.3/3)},col sep = comma] {data/Example32D_orderP3_maxDof1000000_theta1.csv};
% \addplot [black,dotted,mark=none] table [x=ndof,y expr={3*sqrt(\thisrowno{0})^(-2)},col sep = comma] {data/Example2_orderP2_maxDof500000_theta1.csv};
% \addplot [black,dotted,mark=none] table [x=ndof,y expr={3*sqrt(\thisrowno{0})^(-3)},col sep = comma] {data/Example2_orderP3_maxDof500000_theta1.csv};
\end{loglogaxis}
\end{tikzpicture}
  \end{center}
  \caption{Estimator for sequence of uniform and locally refined meshes for the example from Section~\ref{sec:num:2d:ex3}.
  The black dashed line indicates $\OO(\#\mathrm{dof}^{-0.1})$.}\label{fig:2djumps}
\end{figure}

Figure~\ref{fig:2djumps} visualizes the error estimators on sequences of uniformly and adaptively refined meshes. We observe the reduced convergence $\OO(\#\mathrm{dof}^{-0.1})$ which is not even improved by the adaptive algorithm.
The reason of the reduced convergence is due to the non-matching boundary conditions and choice of the approximation space. Similar effects have been observed for the least-squares FEM for parabolic problems, see~\cite{FuehrerK_21}. 
For parabolic problems convergence can be improved by introducing new finite elements as has been shown in~\cite{GantnerS_22improved}.
In the future, we plan to study construction of novel finite element spaces.
%  \section{Conclusion and Outlook}
%  \begin{itemize}
%    \item Presented a complete analysis of first-order system wave equation in a graph norm. 
%    \item Our analysis not only interesting for LSFEM but also for other numerical methods. it can be basis for DPG method for wave equation which we plan to study in the future. While there exist the works of Jay and Paulina we do a different analysis. 
%    \item Develop new finite element spaces --> low-regularity situations
%    \item Solvers are to be studied
%  \end{itemize}

%%--------------------------------------------------------------------------------------------------------
\bibliographystyle{abbrv}
\bibliography{literature}

\begin{thebibliography}{10}

\bibitem{AdamsF_03}
R.~A. Adams and J.~J.~F. Fournier.
\newblock {\em Sobolev spaces}, volume 140 of {\em Pure and Applied Mathematics
  (Amsterdam)}.
\newblock Elsevier/Academic Press, Amsterdam, second edition, 2003.

\bibitem{AntonicBV_13}
N.~Antoni\'{c}, K.~Burazin, and M.~Vrdoljak.
\newblock Heat equation as a {F}riedrichs system.
\newblock {\em J. Math. Anal. Appl.}, 404(2):537--553, 2013.

\bibitem{BalesLasiecka_94}
L.~Bales and I.~Lasiecka.
\newblock Continuous finite elements in space and time for the nonhomogeneous
  wave equation.
\newblock {\em Comput. Math. Appl.}, 27(3):91--102, 1994.

\bibitem{BangerthGR_10}
W.~Bangerth, M.~Geiger, and R.~Rannacher.
\newblock Adaptive {G}alerkin finite element methods for the wave equation.
\newblock {\em Comput. Methods Appl. Math.}, 10(1):3--48, 2010.

\bibitem{BerggrenH21}
M.~Berggren and L.~H\"{a}gg.
\newblock Well-posed variational formulations of {F}riedrichs-type systems.
\newblock {\em J. Differential Equations}, 292:90--131, 2021.

\bibitem{BochevG_09}
P.~B. Bochev and M.~D. Gunzburger.
\newblock {\em Least-squares finite element methods}, volume 166 of {\em
  Applied Mathematical Sciences}.
\newblock Springer, New York, 2009.

\bibitem{BrennerScott}
S.~C. Brenner and L.~R. Scott.
\newblock {\em The mathematical theory of finite element methods}, volume~15 of
  {\em Texts in Applied Mathematics}.
\newblock Springer, New York, third edition, 2008.

\bibitem{CaiLMM_94}
Z.~Cai, R.~Lazarov, T.~A. Manteuffel, and S.~F. McCormick.
\newblock First-order system least squares for second-order partial
  differential equations. {I}.
\newblock {\em SIAM J. Numer. Anal.}, 31(6):1785--1799, 1994.

\bibitem{CaiMM_97}
Z.~Cai, T.~A. Manteuffel, and S.~F. McCormick.
\newblock First-order system least squares for second-order partial
  differential equations. {II}.
\newblock {\em SIAM J. Numer. Anal.}, 34(2):425--454, 1997.

\bibitem{CohenP_17}
G.~Cohen and S.~Pernet.
\newblock {\em Finite element and discontinuous {G}alerkin methods for
  transient wave equations}.
\newblock Scientific Computation. Springer, Dordrecht, 2017.
\newblock With a foreword by Patrick Joly.

\bibitem{Cohen_02}
G.~C. Cohen.
\newblock {\em Higher-order numerical methods for transient wave equations}.
\newblock Scientific Computation. Springer-Verlag, Berlin, 2002.
\newblock With a foreword by R. Glowinski.

\bibitem{DautrayL_92}
R.~Dautray and J.-L. Lions.
\newblock {\em Mathematical analysis and numerical methods for science and
  technology. {V}ol. 5}.
\newblock Springer-Verlag, Berlin, 1992.
\newblock Evolution problems. I, With the collaboration of Michel Artola,
  Michel Cessenat and H\'{e}l\`ene Lanchon, Translated from the French by Alan
  Craig.

\bibitem{DoerflerFW_16}
W.~D\"{o}rfler, S.~Findeisen, and C.~Wieners.
\newblock Space-time discontinuous {G}alerkin discretizations for linear
  first-order hyperbolic evolution systems.
\newblock {\em Comput. Methods Appl. Math.}, 16(3):409--428, 2016.

\bibitem{ErnG_21PartIII}
A.~Ern and J.-L. Guermond.
\newblock {\em Finite elements {III}---first-order and time-dependent {PDE}s},
  volume~74 of {\em Texts in Applied Mathematics}.
\newblock Springer, Cham, [2021] \copyright 2021.

\bibitem{ErnestiW_19_A}
J.~Ernesti and C.~Wieners.
\newblock A space-time discontinuous {P}etrov-{G}alerkin method for acoustic
  waves.
\newblock In {\em Space-time methods---applications to partial differential
  equations}, volume~25 of {\em Radon Ser. Comput. Appl. Math.}, pages 89--115.
  De Gruyter, Berlin, [2019] \copyright 2019.

\bibitem{Evans}
L.~C. Evans.
\newblock {\em Partial differential equations}, volume~19 of {\em Graduate
  Studies in Mathematics}.
\newblock American Mathematical Society, Providence, RI, second edition, 2010.

\bibitem{FrenchP_96}
D.~A. French and T.~E. Peterson.
\newblock A continuous space-time finite element method for the wave equation.
\newblock {\em Math. Comp.}, 65(214):491--506, 1996.

\bibitem{FuehrerK_21}
T.~F\"{u}hrer and M.~Karkulik.
\newblock Space-time least-squares finite elements for parabolic equations.
\newblock {\em Comput. Math. Appl.}, 92:27--36, 2021.

\bibitem{MR4138307}
T.~F\"{u}hrer and D.~Praetorius.
\newblock A short note on plain convergence of adaptive least-squares finite
  element methods.
\newblock {\em Comput. Math. Appl.}, 80(6):1619--1632, 2020.

\bibitem{GantnerS_21}
G.~Gantner and R.~Stevenson.
\newblock Further results on a space-time {FOSLS} formulation of parabolic
  {PDE}s.
\newblock {\em ESAIM Math. Model. Numer. Anal.}, 55(1):283--299, 2021.

\bibitem{GantnerS_22improved}
G.~Gantner and R.~Stevenson.
\newblock Improved rates for a space-time fosls of parabolic pdes.
\newblock arXiv:2208.10824, 2022.

\bibitem{GopalakrishnanSW_17}
J.~Gopalakrishnan, J.~Sch\"{o}berl, and C.~Wintersteiger.
\newblock Mapped tent pitching schemes for hyperbolic systems.
\newblock {\em SIAM J. Sci. Comput.}, 39(6):B1043--B1063, 2017.

\bibitem{GopalakrishnanS_19}
J.~Gopalakrishnan and P.~Sep\'{u}lveda.
\newblock A space-time {DPG} method for the wave equation in multiple
  dimensions.
\newblock In {\em Space-time methods---applications to partial differential
  equations}, volume~25 of {\em Radon Ser. Comput. Appl. Math.}, pages
  117--139. De Gruyter, Berlin, [2019] \copyright 2019.

\bibitem{HenningPSU_22}
J.~Henning, D.~Palitta, V.~Simoncini, and K.~Urban.
\newblock An ultraweak space-time variational formulation for the wave
  equation: analysis and efficient numerical solution.
\newblock {\em ESAIM Math. Model. Numer. Anal.}, 56(4):1173--1198, 2022.

\bibitem{HesthavenW_08}
J.~S. Hesthaven and T.~Warburton.
\newblock {\em Nodal discontinuous {G}alerkin methods}, volume~54 of {\em Texts
  in Applied Mathematics}.
\newblock Springer, New York, 2008.
\newblock Algorithms, analysis, and applications.

\bibitem{HulbertH_90}
G.~M. Hulbert and T.~J.~R. Hughes.
\newblock Space-time finite element methods for second-order hyperbolic
  equations.
\newblock {\em Comput. Methods Appl. Mech. Engrg.}, 84(3):327--348, 1990.

\bibitem{HytonenNVW_16}
T.~Hyt\"{o}nen, J.~van Neerven, M.~Veraar, and L.~Weis.
\newblock {\em Analysis in {B}anach spaces. {V}ol. {I}. {M}artingales and
  {L}ittlewood-{P}aley theory}, volume~63 of {\em Ergebnisse der Mathematik und
  ihrer Grenzgebiete. 3. Folge. A Series of Modern Surveys in Mathematics
  [Results in Mathematics and Related Areas. 3rd Series. A Series of Modern
  Surveys in Mathematics]}.
\newblock Springer, Cham, 2016.

\bibitem{Jensen_PAMM_06}
M.~Jensen.
\newblock Remarks on duality in graph spaces of first-order linear operators.
\newblock {\em PAMM}, 6(1):31--34, 2006.

\bibitem{Johnson_93}
C.~Johnson.
\newblock Discontinuous {G}alerkin finite element methods for second order
  hyperbolic problems.
\newblock {\em Comput. Methods Appl. Mech. Engrg.}, 107(1-2):117--129, 1993.

\bibitem{Leveque_02}
R.~J. LeVeque.
\newblock {\em Finite Volume Methods for Hyperbolic Problems}.
\newblock Cambridge Texts in Applied Mathematics. Cambridge University Press,
  2002.

\bibitem{LionsM_72}
J.-L. Lions and E.~Magenes.
\newblock {\em Non-homogeneous boundary value problems and applications. {V}ol.
  {I}}.
\newblock Die Grundlehren der mathematischen Wissenschaften, Band 181.
  Springer-Verlag, New York-Heidelberg, 1972.
\newblock Translated from the French by P. Kenneth.

\bibitem{MoiolaP_18}
A.~Moiola and I.~Perugia.
\newblock A space-time {T}refftz discontinuous {G}alerkin method for the
  acoustic wave equation in first-order formulation.
\newblock {\em Numer. Math.}, 138(2):389--435, 2018.

\bibitem{MatuteDR_22}
J.~Mu\~{n}oz Matute, L.~Demkowicz, and N.~V. Roberts.
\newblock Combining {DPG} in space with {DPG} time-marching scheme for the
  transient advection-reaction equation.
\newblock {\em Comput. Methods Appl. Mech. Engrg.}, 402:Paper No. 115471, 24,
  2022.

\bibitem{PerugiaSSW_20}
I.~Perugia, J.~Sch\"{o}berl, P.~Stocker, and C.~Wintersteiger.
\newblock Tent pitching and {T}refftz-{DG} method for the acoustic wave
  equation.
\newblock {\em Comput. Math. Appl.}, 79(10):2987--3000, 2020.

\bibitem{ngsolve}
J.~Sch\"oberl.
\newblock C++11 implementation of finite elements in ngsolve.
\newblock Technical report, Technische Universit\"at Wien, 2014.

\bibitem{SteinbachZ_20}
O.~Steinbach and M.~Zank.
\newblock Coercive space-time finite element methods for initial boundary value
  problems.
\newblock {\em Electron. Trans. Numer. Anal.}, 52:154--194, 2020.

\bibitem{Zank_19}
M.~Zank.
\newblock {\em Inf-Sup Stable Space-Time Methods for Time-Dependent Partial
  Differential Equations}.
\newblock PhD thesis, TU Graz, 2019.

\bibitem{Zeidler_90}
E.~Zeidler.
\newblock {\em Nonlinear functional analysis and its applications. {II}/{A}}.
\newblock Springer-Verlag, New York, 1990.
\newblock Linear monotone operators, Translated from the German by the author
  and Leo F. Boron.

\end{thebibliography}
%%------------------------------------------------------
%\end{document}
%  \end{align*}
%  Cauchy-Schwarz and Young's inequality yield
%  \begin{align*}
%    \| \Pi_{L^2(\nabla H^1_0(\Omega)^\perp)}\ssigma (t) \|_{L^2(\Omega)}^2
%    \lesssim \int_0^T \| 
%  \end{align*}
%\end{proof}
%%--------------------------------------------------------------------------------------------------------
%\section{Numerical approximation by finite elements}
%%--------------------------------------------------------------------------------------------------------
%--------------------------------------------------------------------------------------------------------
%\bibliographystyle{siamplain}
%\bibliography{literature}
%------------------------------------------------------
\end{document}